\tikzstyle{my loopup}=[->, to path={
\tikzstyle{my loopdown}=[->, to path={
\tikzstyle{my arcdown}=[->, to path={
\tikzstyle{my arcup}=[->, to path={
\newcommand\Omit[1]{}
\begin{document}



\newtheorem{definition}{Definition}[section]
\newtheorem{notation}[definition]{Notation}
\newtheorem{lemma}[definition]{Lemma}
\newtheorem{proposition}[definition]{Proposition}
\newtheorem{theorem}[definition]{Theorem}
\newtheorem{corollary}[definition]{Corollary}
\newtheorem{fact}[definition]{Fact}
\newtheorem{example}[definition]{Example}
\newtheorem{constraint}[definition]{Constraint}
\newtheorem{convention}[definition]{Convention}
\newtheorem{remark}[definition]{Remark}

\newenvironment{proof}{\noindent {\bf Proof}}{\hfill$\Box$
\medskip
}

\newtheorem{axiom}{Axiom}

\newcommand{\Introduction}{\section*{Introduction}
                           \addcontentsline{toc}{chapter}{Introduction}}

\def\sl#1{\underline{#1}}
\def\cl#1{{\mathcal{#1}}}

\newcommand{\df}[1]{#1\!\!\searrow}
\newcommand{\udf}[1]{#1\!\!\nearrow}

\newcommand{\rmq}{\hspace{-5mm}{\bf Remark:} ~}
\newcommand{\lpar}{\par\noindent}


\newenvironment{infrule}{\begin{array}{c}}{\end{array}}

\def\rulename#1{({\sc #1})}

\def\et{\hskip 1.5em \relax}

\def\nm{\vspace{-1mm} \\  \hspace{-0.6cm}}
\def\nom{\vspace{-1mm} \\  \hspace{-1.1cm}}
\def\nomq{\vspace{-1mm} \\  \hspace{-1.3cm}}
\def\nomter{\vspace{-1mm} \\  \hspace{-1.4cm}}
\def\sansnom{\vspace{-2mm} \\ \hspace{-0.2cm}}
\def\nombis{\vspace{-1mm} \\  \hspace{-1.8cm}}
\def\decal{\vspace{-2mm} \\ \hspace{0.4cm}}

\def\imp{ ~ \hrulefill \\}

\def\ligne{\\[5mm]}

\def\Ligne{\\[7mm]}


\newenvironment{ex.}{
        \medskip
        \noindent {\bf Example}}
        {\hfill$\Box$ \medskip}
\newenvironment{Enonce}[1]{
        \begin{description}
        \item[{\bf #1~:}]
        \mbox{}
}{
        \end{description}
}

\def\Doubleunion#1#2#3{\displaystyle{\bigcup_{#1}^{#2} {#3}}}
\def\Union#1#2{\displaystyle{\bigcup_{#1} #2}}
\def\Intersect#1#2{\displaystyle{\bigcap_{#1} #2}}
\def\Coprod#1#2{\displaystyle{\coprod_{#1} #2}}
\def\Produit#1#2#3{\displaystyle{\prod_{#1}^{#2} {#3}}}
\def\Conj#1#2{\displaystyle{\bigwedge_{#1} \!\!\!\!\!#2}}
\def\Disj#1#2{\displaystyle{\bigvee_{#1} \!\!\!\!\!#2}}

\newcommand{\Nat}{I\!\!N}
\newcommand{\Int}{Z\!\!\! Z}

\def\infer#1#2{\mbox{\large  ${#1} \frac {#2}$ \normalsize}}

\def\sem#1{[\![ #1 ] \!]}


\newcommand{\mv}{\varepsilon}

\def\M#1#2{M_{\scriptstyle #1 \atop \scriptstyle #2}}
\def\H#1#2{H_{\scriptstyle #1 \atop \scriptstyle #2}}
\newcommand{\Ro}{{\cal{R}}}
\newcommand{\Mo}{{\cal{M}}}
\newcommand{\Ho}{{\cal{H}}}
\newcommand{\Eo}{{\cal{E}}}
\newcommand{\ssucc}{\succ \!\! \succ}


\def\id#1{\underline{#1}}


\title{Abstract Mathematical morphology based on structuring element: {Application to morpho-logic}}

\author{Marc Aiguier$^1$ and Isabelle Bloch$^2$ and Ram\'on Pino-P\'erez$^3$\\ \ \\
1. MICS, CentraleSupelec, Universit\'e Paris Saclay, France \\ {\it marc.aiguier@centralesupelec.fr} \\
2. LTCI, T\'el\'ecom Paris, Institut Polytechnique de Paris, France \\ {\it isabelle.bloch@telecom-paris.fr} \\
3. Departemento de Matematicas, Facultad de Ciencias, \\ Universidad de Los Andes, M\'erida, Venezuela  \\ {\it pinoperez@gmail.com}}
\date{}

\maketitle

\begin{abstract}
A general definition of mathematical morphology has been defined within the algebraic framework of complete lattice theory. In this framework, dealing with deterministic and increasing operators, a dilation (respectively an erosion) is an operation which is distributive over supremum (respectively infimum). From this simple definition of dilation and erosion, we cannot say much about the properties of them. However, when they form an adjunction, many important properties can be derived such as monotonicity, idempotence, and extensivity or anti-extensivity of their composition, preservation of infimum and supremum, etc. Mathematical morphology has been first developed in the setting of sets, and then extended to other algebraic structures such as graphs, hypergraphs or simplicial complexes. For all these algebraic structures, erosion and dilation are usually based on structuring elements. The goal is then to match these structuring elements on given objects either to dilate or erode them. One of the advantages of defining erosion and dilation based on structuring elements is that these operations are adjoint. Based on this observation, this paper proposes to define, at the abstract level of category theory, erosion and dilation based on structuring elements. We then define the notion of morpho-category on which erosion and dilation are defined. We then show that topos and more precisely topos of presheaves are good candidates to generate morpho-categories. However, topos do not allow taking into account the notion of inclusion between substructures but rather are defined by monics up to domain isomorphism. Therefore we define the notion of morpholizable category which allows generating morpho-categories where substructures are defined along inclusion morphisms. {A direct application of this framework is to generalize modal morpho-logic to other algebraic structures than simple sets.}
\end{abstract}

\noindent
{\small {\bf Keywords:} Morpho-categories, morpholizable categories, Topos, mathematical morphology, {morpho-logic}.}



\section{Introduction}

Mathematical morphology~\cite{BHR07,IGMI_NajTal10} is a formal technique which allows processing information in a non-linear fashion. It has been created in the 60s by G. Matheron and J. Serra initially to be applied to mining problems. Since then, mathematical morphology was mainly applied in the field of image processing~\cite{Ser82,Ser88}. The first formalization of mathematical morphology was made within the framework of set theory, and more recently it has been extended to a larger family of algebraic structures such as graphs~\cite{CNS09,cousty2013,MS09,Vin89}, hypergraphs~\cite{BB13,IB:DAM-15} and simplicial complexes~\cite{DCN11}. The two basic operations of mathematical morphology are erosion and dilation. When these two operations are defined within the framework of set theory, they are often based on structuring elements. The goal is then to match these structuring elements on given objects either to dilate or erode them.    

A general definition of mathematical morphology (in the case of deterministic and increasing operators) has been defined within the algebraic framework of complete lattice theory~\cite{HR90}. In this framework, a dilation (respectively an erosion) is an operation which is distributive over supremum (respectively infimum). In the framework of set theory, the complete lattice which is classically considered is $(\mathcal{P}(S),\subseteq)$ where $\mathcal{P}(S)$ is the powerset of the set $S$. From this simple definition of dilation and erosion, we cannot say much about their properties. By contrast, when they form an adjunction, many important properties can be derived such as monotonicity, idempotence, preservation of infimum and supremum, etc. Now, what is observed in all extensions of mathematical morphology to other algebraic structures than simple sets such as graphs, hypergraphs, etc., is that the process is always the same: defining the complete lattice from which erosion and dilation based on structuring elements are defined. Structuring elements are typically defined from a binary relation between elements of the underlying space (on which the algebraic structure is built). One of the advantages of defining dilation and erosion based on structuring elements is that in this case they form an adjunction. 

In this paper, we propose to abstractly formalize (i.e. independently of a given algebraic structure) this approach to define the adequate complete lattice from which dilation and erosion based on structuring elements will be defined, and then to obtain directly the adjunction property as well as all the other algebraic properties mentioned previously. To this end, we propose to use the framework of category theory. We define a family of categories, called {\em morpho-categories} from which we abstractly define the notions of structuring element, erosion and dilation  such that these two mathematical morphology operations form an adjunction. We then detail a series of examples of algebraic structures from the categorical notion of elementary topos~\cite{BW85,Law72}, and more precisely the topos of presheaves. The usefulness of topos of presheaves  is that they allow us to unify in a same framework most of algebraic structures such as sets, graphs, and hypergraphs. Indeed, sets, graphs, and hypergraphs and many other algebraic structures can be defined as presheaves whose the category is known to be a complete and co-complete topos. Another important advantage of topos is that they assure us that given an object $c$ of the topos, the set of subobjects $Sub(c)$ is a morpho-category over which we can then define erosion and dilation based on structuring elements. 

Now, although topos allow us to define erosion and dilation such that they form an adjunction, this framework is not completely satisfactory. The problem is that subobjects in $Sub(c)$ are  not defined uniquely but up to an isomorphism, and then relations between subobjects are monics rather than standard inclusions. This leads us to consider a larger family of categories than topos, called {\em morpholizable categories}, which will also allow us to  generate morpho-categories as before with topos but whose each element will be a subobject in a standard way (i.e. with respect to inclusion). 

\medskip
The paper is organized as follows: Section~\ref{preliminaries} reviews some concepts, notations and terminology about topos and mathematical morphology. In Section~\ref{sec:morpho-categories} we propose to define abstractly the concept of morpho-categories that we then illustrate with some examples of algebraic structures defined by presheaves. In Section~\ref{structuring} we introduce the two mathematical morphology operators of erosion and dilation based on structuring elements within the framework of morpho-categories. Some results will be also presented about them such as the property that they form an adjunction. We will further give sufficient conditions for these two operations to be anti-extensive and extensive, and dual with respect to each other. In Section~\ref{morpholizable categories} we introduce the notion of morpholizable categories over which we abstractly define a notion of inclusion between objects and show how to build morpho-categories in this framework. {Finally, in Section~\ref{morpho-logic}, we apply the proposed abstract framework to define semantics of both modalities $\Box$ and $\Diamond$, which are standard in modal logic. Hence, we extend the work in~\cite{Bloch02} where semantics of the modalities are also defined as morphological erosion and dilation but within the restricted framework of set theory. By considering a larger family of algebraic structures (graphs, hypergraphs, presheaves, etc.) the modal logic thus defined, so-called {\em morpho-logic}, will be more intuitionistic than classic.}

\section{Preliminaries: topos and mathematical morphology}
\label{preliminaries}

 \subsection{Topos}
 \label{topos}
 
This paper relies on many terms and notations from the categorical theory of topos. The notions introduced here make then use of basic notions of category theory (category, functor, natural transformation, limits, colimits, etc.), which are not recalled here, but interested readers may refer to textbooks such as~\cite{BW90,McL71}. 
 
 The theory of topos has been defined by A. Grothendieck at the end of the 50's~\cite{Gro57}. Here, we will not present Grothendieck's definition of topos, but the more general one of elementary topos defined in the late 60's by Lawvere and Tierney~\cite{Law72}. The presentation will remain succinct and  readers wanting to go into details in topos may refer to~\cite{BW85} for instance. 
Topos constitute
a first family of categories from which we can define morpho-categories.
 
\subsubsection{Definitions}

 \paragraph{Notations.} In the following, morphisms are denoted by the lowercase latin letters $f$, $g$, $h$..., and natural transformations by the lowercase greek letters $\alpha$, $\beta$, etc. Standarly, a natural transformation $\alpha$ between two functors $F,F' : \mathcal{C} \to \mathcal{C}'$ is denoted by $\alpha : F \Rightarrow F'$. We write $f : A \rightarrowtail B$ or more simply $A \rightarrowtail B$ to indicate that $f$ is a monic. Finally, $dom(f)$ and $cod(f)$ denote, respectively, the domain and the codomain of a given morphism $f$. 
 
 \begin{definition}[Exponentiable and exponential]
Let $\mathcal{C}$ be a category. An object $A \in \mathcal{C}$ is {\bf exponentiable} if for every object $B \in \mathcal{C}$ there exists an object $B^A \in \mathcal{C}$, called {\bf exponential}, and a morphism $ev : B^A \times A \to B$, called {\bf evaluation}, such that for every morphism $f : A \times C \to B$ there exists a unique  morphism $f^\flat$, called {\bf exponential transpose}, for which the diagram 

$$
\xymatrix{
 A \times B^A \ar[r]^{ev} & B \\
 A \times C \ar[u]^{id_A \times f^\flat} \ar[ru]_f
  }
$$

commutes. We say that the category has {\bf exponentials} if every object has its exponential.
\end{definition}
 
 \begin{definition}[Cartesian closed]
A category $\mathcal{C}$ is called {\bf Cartesian closed} if $\mathcal{C}$ has finite products and exponentials.
\end{definition}

\begin{definition}[Subobject classifier]
Let $\mathcal{C}$ be a category. A {\bf subobject classifier} for $\mathcal{C}$ is an object $\Omega \in |\mathcal{C}|$ together with a morphism $\top : \mathbb{1} \to \Omega$ ($\mathbb{1}$ is the terminal object of $\mathcal{C}$ if it exists), called the {\bf true arrow}, such that for each monic $m : B \rightarrowtail A$, there is a unique morphism $\chi_B : A \to \Omega$, called the {\bf characteristic arrow} of $B$, such that the diagram

$$
\xymatrix{
 B \ar[r]^{!} \ar[d]^m & \mathbb{1} \ar[d]_\top \\
 A \ar[r]^{\chi_B} & \Omega
 }
$$
is a pullback. The morphism $\top \circ !$ is often denoted $\top_B$. 
\end{definition}

It is quite simple to show that subobject classifiers are unique up to an isomorphism. Moreover, given an object $A$ of a category $\mathcal{C}$ with a subobject classifier, we denote by $Sub(A)$ the set $\{[f] \mid cod(f) = A~\mbox{and}~\mbox{$f$ is a monic}\}$ where $[f]$ is the equivalence class of $f$ according to the equivalence relation $\approx$ defined by: 

$$
\begin{array}{lll}
f \approx g & \Longleftrightarrow & 
\xymatrix{
 C \ar[d]_{\simeq} \ar[dr]^f \\
 B \ar[r]_{g} & A
  }
  \end{array}
 $$
 where $B \simeq C$ means that $B$ and $C$ are isomorphic objects.
 Then, we have that $Sub(A) \cong Hom_\mathcal{C}(A,\Omega)$, where $\cong$ denotes isomorphism between morphisms, and $Hom_\mathcal{C}(A,\Omega)$ is the set of morphisms in $\mathcal{C}$ from $A$ into $\Omega$ (the mapping $[f] \in Sub(A) \mapsto \chi_{dom(f)} \in Hom_{\mathcal{C}}(A,\Omega)$ is obviously a bijection). This then leads to the definition of power object defined next.
 
 \begin{definition}[Powerobject]
 Let $\mathcal{C}$ be a Cartesian closed category with a subobject classifier $\Omega$. The {\bf power object} $P(A)$ of an object $A \in |\mathcal{C}|$ is defined as the object $\Omega^A$ (exponential of $A$ and $\Omega$).  
 \end{definition}
 
 \begin{definition}[Topos]
 A {\bf topos} is a Cartesian closed category $\mathcal{C}$ with finite limits (i.e. limits for all finite diagrams) and a subobject classifier. 
 \end{definition}
 
When $\mathcal{C}$ is a topos, we have that $Hom_\mathcal{C}(A \times B,\Omega) \cong Hom_\mathcal{C}(A,\Omega^B)$ ($\mathcal{C}$ is Cartesian closed), and then $Sub(A \times B) \cong Hom_\mathcal{C}(A,P(B))$. More precisely, let us denote by $Hom_\mathcal{C}(\_,P(B)) : \mathcal{C}^{op} \to Set$ the standard contravariant hom functor, and  by $Sub(\_ \times B) : \mathcal{C} \to Set$ the contravariant functor which associates to any object $A \in |\mathcal{C}|$ the set $Sub(A \times B)$ and to any morphism $h : A \to A'$ the mapping $Sub(h \times B): Sub(A' \times B) \to Sub(A \times B)$ defined by $[f'] \mapsto [f]$ for which there exists a morphism $g : dom(f) \to dom(f')$ such that $dom(f)$ with $g$ and $f$ is a pullback of $f'$ along $h \times Id_B$. We then have that $Hom_\mathcal{C}(\_,P(B)) \cong Sub(\_ \times B)$ naturally, i.e. there is a natural transformation $\varphi(\_,B) : Hom_\mathcal{C}(\_,P(B)) \Rightarrow Sub(\_ \times B)$ such that for every $A \in \mathcal{C}$, $\varphi(A,B)$ is bijective~\footnote{This generalizes to topos the set property which to every binary relation $r \subseteq A \times B$ associates a unique mapping $f_r : A \to \mathcal{P}(B)$ with $f_r(a) = \{b \mid (a,b) \in r\}$.}. The subobject of $P(B) \times B$ corresponding to $Id_{P(B)}$ by $\varphi(P(B),B)$ is denoted $\ni_B$. 
 
 \medskip
 The following results are standard  for topos:
 
 \begin{itemize}
 \item Every topos has also finite colimits, and then it has an initial object and a final object which are respectively the limit and the colimit of the empty diagram. The initial object is denoted by $\emptyset$.
 \item Every morphism $f$ in a topos can be factorized uniquely as $m_f \circ e_f$ where $e_f$ is an epic and $m_f$ is a monic. The codomain of $e_f$ is often denoted by $Im(f)$, and then $A \stackrel{f}{\rightarrow} B = A \stackrel{e_f}{\rightarrow} Im(f) \stackrel{m_f}{\rightarrowtail} B$. 
 \end{itemize}
 (Readers interested by the proofs of these results may refer to~\cite{BW90}.)
 
 \subsubsection{Example: the category of presheaves $Set^{\mathcal{C}^{op}}$}
 \label{examples of preseheaves}
 
An interesting category for us in the following is the category of presheaves $Set^{\mathcal{C}^{op}}$. Let $\mathcal{C}$ be a small category. Let us denote by $Set^{\mathcal{C}^{op}}$ the category of contravariant functors $F : \mathcal{C}^{op} \to Set$ (presheaves) where $Set$ is the category of sets. When the category $\mathcal{C}$ is a small category (i.e. both collections of objects and arrows are sets), it is known that the category $Set^{\mathcal{C}^{op}}$ is complete and co-complete (i.e. it has all limits and colimits). Then, let us show that it is a topos. 

\paragraph{$Set^{\mathcal{C}^{op}}$ is Cartesian closed.} The product of two functors $F ,G : \mathcal{C}^{op} \to Set$ is the functor $H : \mathcal{C}^{op} \to Set$ defined for every $C \in \mathcal{C}$ by $H(C) = F(C) \times G(C)$, and for every $f : A \to B \in \mathcal{C}$ by the mapping $H(f) : H(B) \to H(A)$ defined by $\{(a,b) \in H(A)  \mid (f(a),f(b)) \in H(B)\}$. \\ To define the exponential of two functors $F,G: \mathcal{C}^{op} \to Set$, we define the object $G^F$ as the functor which associates to any object $C \in |\mathcal{C}|$ the set of natural transformations from $Hom(\_,C) \times F$ to $G$. For every $f : A \to B \in \mathcal{C}$, $G^F(f) : G^F(B) \to G^F(A)$ is the mapping which associates to any natural transformation $\alpha : Hom(\_,B) \times F \Rightarrow G$ the natural transformation $\beta : Hom(\_,A) \times F \Rightarrow G$ defined for every object $C \in |\mathcal{C}|$ by $\beta_C(g : C \to A,c \in F(C)) = \alpha_C(f \circ g,c)$.  

\paragraph{$Set^{\mathcal{C}^{op}}$ has a subobject classifier.} In order to identify the subobject $\Omega$, we should first identify the subobjects of presheaves. Note that the action of a monic $i : F \Rightarrow G$ in $Set^{\mathcal{C}^{op}}$ ($i$ is a natural transformation) on any $A \in |\mathcal{C}|$ results in an injective mapping $i_A : F(A) \to G(A)$, and then $F$ is a sub-presheaf of $G$ if for every object $A \in |\mathcal{C}|$, $i_A$ is the set-inclusion. Then, for every $A \in |\mathcal{C}|$ let us denote by $Sieve(A)$ the set of all sieves on $A$ where a sieve is a set $S$ of arrows $f$ in $\mathcal{C}$ such that:

\begin{enumerate}
\item If $f \in S$ then $cod(f) = A$, and 
\item If $f \in S$ and $cod(g) = dom(f)$, then $f \circ g \in S$.
\end{enumerate}

Given a monic $i : F \Rightarrow G$ in $Set^{\mathcal{C}^{op}}$, let us define for every $A \in |\mathcal{C}|$ the mapping $\chi_{i(A)} : G(A) \to Sieve(A)$ by: $\forall x \in G(A)$

$$\chi_{i(A)}(x) = 
\left\{
\begin{array}{ll}
\{f : B \to A \mid G(f)(x) \in i_B(F(B))\} & \mbox{if $x \in i_A(F(A))$} \\
\emptyset & \mbox{otherwise}
\end{array}
\right.$$ 
So, if we define the functor $\Omega : \mathcal{C}^{op} \to Set$ by associating to any $A \in |\mathcal{C}|$ the set $Sieve(A)$, then we have respectively

\begin{itemize}
\item the natural transformation $\chi_i : G \Rightarrow \Omega$ which to every $A \in |\mathcal{C}|$ and to every $x \in G(A)$, sets $\chi_{i_A}(x) =  \chi_{i(A)}(x)$;
\item the natural transformation $T : \mathbb{1} \Rightarrow \Omega$ which~\footnote{$\mathbb{1} : \mathcal{C} \to Set$ is the presheaf which associates to any $A \in |\mathcal{C}|$ the terminal object $\mathbb{1}$ in $Set$.} to every $A \in |\mathcal{C}|$, $T_A$ associates to the unique element in $\mathbb{1}(A)$ the maximal sieve on $A$ (i.e. the set $\Omega(A)$).
\end{itemize}

These natural transformations $\chi_i$ and $T$ make the diagram

$$
\xymatrix{
 F \ar@{=>}[r]^{!} \ar@{=>}[d]^i & \mathbb{1} \ar@{=>}[d]_\top \\
 G \ar@{=>}[r]^{\chi_i} & \Omega
 }
$$
be a pullback square for every $A \in \mathcal{C}$, and then $\Omega$ is a subobject classifier in $Set^{\mathcal{C}^{op}}$. 

\paragraph{Examples of categories $\mathcal{C}$.}  Sets, undirected graphs, directed graphs, rooted trees and hypergraphs can be defined as presheaves.  Indeed, for each of them, the (base) category $\mathcal{C}$ can be graphically defined as:
 $$
 \begin{array}{lllllllll}
 \xymatrix{
 \bullet   \ar@(ul,ur) 
 } & \ &
 \xymatrix{
 V \ar@/^/[r]^{s}\ar@/_/[r]_{t} &  E \ar@(ul,ur)^{s(t),t(s)} \\
  } & \ &
 \xymatrix{
 V \ar@/^/[r]^s\ar@/_/[r]_t &  E
  } & \ &
  (\mathbb{N},\leq) & \ &
  V \rightarrow R \leftarrow E
 \end{array}$$
 
 The category $\mathcal{C}$ for undirected graphs satisfies in addition the two equations: $s \circ s(t),t(s) = t$ and $t \circ s(t),t(s) = s$.
 
 \medskip
 For each case, the subobject classifier can be refined. Indeed, let us consider the case of directed graphs. Then, let $G : \mathcal{C}^{op} \to Set$ be a directed graph. We have seen that $\Omega(V)$ and $\Omega(E)$ are respectively $Sieve(V)$ and $Sieve(E)$, i.e. $\Omega(V) = \{\emptyset,\{id_V\}\}$ and $\Omega(E) = \{\emptyset,\{s\},\{t\},\{s,t\},\{s,t,id_E\}\}$. For any presheaf category $Set^{\mathcal{C}^{op}}$, given a morphism $f : X \to Y$ in  $\mathcal{C}$, there is a straightforward way to compute $\Omega(f) : \Omega(Y) \to \Omega(X)$. Indeed, for any sieve $S$ on $Y$ we have 
 $$\Omega(f)(S) = \{g : Z \to X \mid Z \in \mathcal{C}~\mbox{and}~f \circ g \in S\}$$
 
 Hence, we have that 
 $$\begin{array}{lll}
 \Omega(s)(\emptyset) = \emptyset & \ \ \ \  & \Omega(t)(\emptyset) = \emptyset \\
 \Omega(s)(\{s\}) = \{Id_V\} & \ \ \ \  & \Omega(t)(\{s\}) = \emptyset \\
 \Omega(s)(\{t\}) = \emptyset & \ \ \ \  & \Omega(t)(\{t\}) = \{Id_V\} \\
 \Omega(s)(\{s,t\}) = \{Id_V\} & \ \ \ \  & \Omega(t)(\{s,t\}) = \{Id_V\} \\
 \Omega(s)(\{s,t,Id_E\}) = \{Id_V\} & \ \ \ \  & \Omega(t)(\{s,t,Id_E\}) = \{Id_V\} \\
 \end{array}$$
 Therefore, the subobject classifier is the directed graph: 
 $$ 
 \xymatrix{
 F \ar@(ul,ur)^{\emptyset} \ar@/^/[r]^t  &  T  \ar@/^/[l]^s \ar@(ul,ur)^{\{s,t\}} \ar@(dl,dr)_{\{s,t,id_E\}} \\
  } 
 $$
 where we denote by $F$ the vertex $\emptyset$ and by $T$ the vertex $\{Id_V\}$.
 
 The true arrow $T : \mathbb{1} \to \Omega$ maps the unique vertex to $T$ and the unique arrow to $\{s,t,id_E\}$. 
 
 \subsubsection{Algebraization of subobjects}
 
 Let $\mathcal{C}$ be a topos. Let $X$ be an object of $\mathcal{C}$. Let us define the partial ordering $\preceq$ on $Sub(X)$ as follows: for all $f : A \rightarrowtail X$ and $g :  B \rightarrowtail X$
 $$[f] \preceq [g] \Longleftrightarrow \exists h : A \rightarrowtail B, f = g \circ h$$
 
 \begin{proposition}
 \label{is a lattice}
 $(Sub(X),\preceq)$ is a bounded lattice. It is a complete lattice if $\mathcal{C}$ is a complete topos (i.e. $\mathcal{C}$ has all limits and colimits).
 \end{proposition}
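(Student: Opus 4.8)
The plan is to establish the three ingredients in order: first that $\preceq$ is a genuine partial order on $Sub(X)$, then that $(Sub(X),\preceq)$ has a least and a greatest element and all binary meets and joins, and finally that in a complete topos the binary (co)limits used can be replaced by arbitrary ones, yielding arbitrary infima and suprema.

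\textbf{Partial order.} Reflexivity is witnessed by $id_{dom(f)}$, and transitivity by composition of monics (a composite of monics is monic, so the witnessing arrow is again a monic). The only point requiring care is antisymmetry: if $[f]\preceq[g]$ and $[g]\preceq[f]$ via monics $h$ and $k$, then $f=g\circ h$ and $g=f\circ k$, whence $f=f\circ(k\circ h)$; since $f$ is monic, $k\circ h=id$, and symmetrically $h\circ k=id$, so $h$ is an isomorphism $dom(f)\simeq dom(g)$ compatible with $f$ and $g$, i.e. $f\approx g$ and $[f]=[g]$.

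\textbf{Bounded lattice.} The greatest element is $[id_X]$, since any monic $g:B\rightarrowtail X$ satisfies $g=id_X\circ g$. The least element is $[\emptyset\rightarrowtail X]$, using that in a topos the unique arrow out of the (strict) initial object is monic and that it factors through every $B\rightarrowtail X$ by the universal property of $\emptyset$. For the binary meet of $[f:A\rightarrowtail X]$ and $[g:B\rightarrowtail X]$, I would take the pullback $A\times_X B\rightarrowtail X$, which exists because a topos has finite limits; its legs are monic (a pullback of a monic is a monic), so the arrow to $X$ is monic, and the pullback universal property is exactly the greatest-lower-bound property. For the binary join I would use the two facts about topos recalled in the excerpt: existence of finite colimits, and the (epi, monic) factorization of any morphism. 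Form the copairing $[f,g]:A+B\to X$ and factor it as $A+B\twoheadrightarrow C\stackrel{m}{\rightarrowtail}X$; then $[f]\vee[g]=[m]$. That $[m]$ is an upper bound is immediate from the coproduct injections, and minimality follows from the diagonal fill-in of the factorization system: any monic $D\rightarrowtail X$ dominating both $[f]$ and $[g]$ receives $A+B\to D$, hence $C\to D$ over $X$.

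\textbf{Completeness.} When $\mathcal{C}$ is a complete topos, given $\{f_i:A_i\rightarrowtail X\}_{i\in I}$ the infimum is the wide pullback, i.e. the limit $L$ of the diagram $(A_i\to X)_{i\in I}$; the leg $L\to X$ is monic because the legs $L\to A_i$ are jointly monic and each $A_i\rightarrowtail X$ is monic, and the limit universal property gives precisely the universal property of $\bigwedge_i[f_i]$. The supremum is obtained as before by factoring $\coprod_{i\in I}A_i\to X$ as $\coprod_{i\in I}A_i\twoheadrightarrow S\rightarrowtail X$. (Alternatively, since $Sub(X)\cong Hom_\mathcal{C}(X,\Omega)$ is a set, $(Sub(X),\preceq)$ is a small poset, and a small poset with a top element and all infima is automatically a complete lattice, so the wide pullbacks alone suffice.) I expect the main obstacle to be exactly the join construction, in both the finite and the arbitrary case: one must justify that the (epi, monic) factorization in a topos is an \emph{orthogonal} factorization system — equivalently, that every epi in a topos is extremal — since the diagonal fill-in is what upgrades the image of the coproduct from merely an upper bound to the \emph{least} upper bound.
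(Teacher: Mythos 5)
Your proof is correct, and the order-theoretic part (reflexivity, transitivity, the antisymmetry argument via $k\circ h=id$, the bounds $[\emptyset\rightarrowtail X]$ and $[id_X]$, and the meet as the pullback $A\times_X B$) coincides with the paper's. Where you genuinely diverge is the join: the paper constructs $A\cup B$ as the \emph{pushout} of the span $A\leftarrow A\cap B\rightarrow B$, invoking the fact that pushouts of monics along monics are monic in a topos and that the induced arrow to $X$ is again monic; you instead take the \emph{image} of the copairing $[f,g]:A+B\to X$ under the (epi, monic) factorization. Both are standard and yield the same subobject. Your worry about needing the factorization to be orthogonal is legitimate but is settled by a standard topos fact: every epi in a topos is regular (the coequalizer of its kernel pair), hence extremal, so the diagonal fill-in exists and $[m]$ is indeed the \emph{least} upper bound. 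Your route has two advantages over the paper's: it scales verbatim to arbitrary families (image of $\coprod_{i\in I}A_i\to X$), whereas the paper only asserts that its binary constructions ``extend to any family''; and your parenthetical observation --- that $Sub(X)\cong Hom_{\mathcal{C}}(X,\Omega)$ is a small poset with a top element and all infima, hence automatically a complete lattice --- gives the completeness claim from wide pullbacks alone, with no join construction needed at all. The paper's pushout description, on the other hand, makes the lattice operations visibly finitary and is the form reused later in the text.
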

 
 \begin{proof}
 Obviously, $\preceq$ is reflexive and transitive. To show anti-symmetry, let us suppose that $[f] \preceq [g]$ and $[g) \preceq [f]$. This means that there exist $h$ and $h'$ such that $f = g \circ h$ and $g = f \circ h'$ whence we have both that $f = f \circ h' \circ h$ and $g = g \circ h \circ h'$. We can deduce that $h' \circ h = Id_A$ and $h \circ h' = Id_B$, and then $A \simeq B$ that is $[f] = [g]$.  Given two elements $[f]$ and $[g]$ of $Sub(X)$ with $f : A \rightarrowtail X$ and $g : B \rightarrowtail X$, the infimum of $[f]$ and $[g]$ is $[A \cap B \rightarrowtail A \stackrel{f}{\rightarrowtail} X]$ which is equivalent to $[A \cap B \rightarrowtail B \stackrel{g}{\rightarrowtail} X]$ (we recall that every topos has pullbacks, and pullbacks of monics are monics), and the supremum of $[f]$ and $[g]$ is $[A\cup B \rightarrowtail X]$ where $A\cup B \rightarrowtail X$ is the unique morphism, consequence of the fact that $A \cup B$ is the pushout of $A \cap B$. This pushout exists because pushouts of monics are monics in toposes. 
 Hence,  $(Sub(X),\preceq)$ is a lattice. Finally, it is bounded because $\mathcal{C}$ is a topos, and then $[\emptyset \rightarrowtail X]$ (in any topos, the unique morphism from the initial object $\emptyset$ to any other object is always a monic) and $[id_X]$ belongs to $Sub(X)$. These definitions of supremum and infimum extent to any family of elements of $Sub(X)$ when $\mathcal{C}$ is complete, and then, if $\mathcal{C}$ is complete, then so is the lattice $(Sub(X),\preceq)$. 
 \end{proof}
 
 Actually, $(Sub(X),\preceq)$ satisfies a stronger result, it is a Heyting algebra. This has generated many works (initiated by Lawvere and Tierney~\cite{Law72}) which establish strong connections between elementary toposes and intuitionistic logics. 
 
 \begin{proposition}
 \label{is an heyting algebra}
 $(Sub(X),\preceq)$ is a Heyting algebra.
 \end{proposition}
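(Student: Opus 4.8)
The plan is to show that the bounded lattice $(Sub(X),\preceq)$ from Proposition~\ref{is a lattice} carries a relative pseudo-complement operation $\Rightarrow$, i.e. for every pair $[f] : A \rightarrowtail X$, $[g] : B \rightarrowtail X$ there is a largest element $[f] \Rightarrow [g]$ with the property that $[h] \preceq ([f] \Rightarrow [g])$ iff $[h] \wedge [f] \preceq [g]$. Since meets and joins were already constructed in the previous proof, exhibiting this residual is exactly what remains. First I would recall that in any topos $Sub(X) \cong Hom_\mathcal{C}(X,\Omega)$, so it suffices to equip $\Omega$ itself with an internal Heyting-implication arrow $\Rightarrow_\Omega : \Omega \times \Omega \to \Omega$ and then define, for subobjects classified by $\chi_A, \chi_B : X \to \Omega$, the element $[f] \Rightarrow [g]$ to be the subobject classified by $\Rightarrow_\Omega \, \circ \, \langle \chi_A, \chi_B \rangle$. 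This reduces the whole statement to a computation about the single object $\Omega$.

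**Next I would** construct $\Rightarrow_\Omega$ as the classifying arrow of a suitable subobject of $\Omega \times \Omega$: concretely, let $e \rightarrowtail \Omega \times \Omega$ be the equalizer of the two maps $\wedge_\Omega : \Omega \times \Omega \to \Omega$ and $\pi_1 : \Omega \times \Omega \to \Omega$, where $\wedge_\Omega$ is the classifying arrow of $\langle \top,\top\rangle : \mathbb{1} \rightarrowtail \Omega \times \Omega$; then set $\Rightarrow_\Omega = \chi_e$. The order $\preceq$ on $Sub(X)$ is itself classified by an internal order $\leq_\Omega \rightarrowtail \Omega \times \Omega$, and the key lemma is that $u \leq_\Omega v$ holds (for generalized elements $u,v$ of $\Omega$) iff $u \wedge_\Omega v = u$. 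Granting that, one checks the adjunction $[h] \wedge [f] \preceq [g] \Longleftrightarrow [h] \preceq ([f]\Rightarrow[g])$ by translating both sides through the bijection $Sub(X)\cong Hom(X,\Omega)$ and using the Yoneda-style principle that two arrows into $\Omega$ agree iff they classify the same subobject. Finiteness of all the limits involved (products, equalizers, pullbacks) is guaranteed since $\mathcal{C}$ is a topos, so all the needed gadgets exist.

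**Alternatively**, and perhaps more cleanly for this paper's level of detail, I would argue pointwise-style via the already-established meet/join structure: define $[f] \Rightarrow [g]$ directly as the join $\bigvee \{ [k] \in Sub(X) \mid [k] \wedge [f] \preceq [g] \}$ when $\mathcal{C}$ is complete, and in the merely finite case observe that the interior operation $A \mapsto$ (the largest subobject of $X$ whose intersection with the given one lies in $B$) is well defined because $Sub(X)$ is finite or, more robustly, because the relevant subobject can be exhibited as a pullback: $(A \Rightarrow B) \rightarrowtail X$ is the largest subobject $C \rightarrowtail X$ such that $C \cap A \rightarrowtail X$ factors through $B \rightarrowtail X$, and this exists by the topos structure (it is classified by the composite $\Rightarrow_\Omega \circ \langle \chi_A,\chi_B\rangle$ as above). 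One then verifies the three Heyting-algebra laws — $[f]\Rightarrow[f] = \top$, $[f]\wedge([f]\Rightarrow[g]) = [f]\wedge[g]$, and $[g]\wedge([f]\Rightarrow[g]) = [g]$ — together with the distributivity of $\wedge$ over $\vee$ that follows formally from the residuation.

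**The main obstacle** I anticipate is not the existence of $\Rightarrow_\Omega$ but the verification of the defining adjunction in a way that is genuinely categorical rather than hand-waving about "elements" of $\Omega$: one must be careful that the equivalence $u \leq_\Omega v \Leftrightarrow u \wedge_\Omega v = u$ holds for all generalized elements, which in turn rests on the fact that pullback along a monic reflects the order, and that the transpose bijection $Sub(X) \cong Hom(X,\Omega)$ is natural in $X$. Everything else — anti-symmetry, boundedness, the explicit meets and joins — has already been done in Proposition~\ref{is a lattice}, so the proof of Proposition~\ref{is an heyting algebra} is essentially the single additional step of producing the residual and checking it residuates, for which I would cite~\cite{BW90} or~\cite{McL71} for the detailed diagram chases rather than reproduce them.
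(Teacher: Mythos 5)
Your construction is correct, but it takes a genuinely different route from the paper's. You internalize the Heyting structure on the subobject classifier: you build $\wedge_\Omega$ as the classifying arrow of $\langle\top,\top\rangle$, take $\leq_\Omega$ to be the equalizer of $\wedge_\Omega$ and $\pi_1$, set $\Rightarrow_\Omega = \chi_{\leq_\Omega}$, and transport everything to $Sub(X)$ along the bijection $Sub(X)\cong Hom_{\mathcal{C}}(X,\Omega)$. This is the classical Mac\,Lane--Moerdijk-style argument; it is uniform and natural in $X$ (so the Heyting structure is automatically stable under pullback), at the cost of the diagram chases you defer to the literature, in particular the check that $\wedge_\Omega\circ\langle\chi_A,\chi_B\rangle$ classifies $A\cap B$ and that factorization through $\leq_\Omega$ coincides with $\preceq$. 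The paper instead proves the special case $X=\mathbb{1}$ by hand --- the only nontrivial point being that the unique map $V^U\to\mathbb{1}$ from the exponential in $\mathcal{C}$ is monic, so that $V^U$ is itself a subobject of $\mathbb{1}$ and serves as the exponential in $Sub(\mathbb{1})$ --- and then deduces the general case from the isomorphism $Sub(X)\simeq Sub_{\mathcal{C}_{/X}}(Id_X)$ together with the fundamental theorem that $\mathcal{C}_{/X}$ is again a topos. The paper's route leans on a bigger external theorem but keeps the local verification minimal; yours avoids slice categories entirely. One caveat on your ``alternative'' pointwise definition $[f]\Rightarrow[g]=\bigvee\{[k]\mid [k]\wedge[f]\preceq[g]\}$: for this join to actually residuate (i.e.\ for $([f]\Rightarrow[g])\wedge[f]\preceq[g]$ to hold) you need the infinite distributive law in $Sub(X)$, which is not available before the Heyting structure is established; so that variant is circular as stated, and you are right to fall back on the $\Rightarrow_\Omega$ construction as the actual proof.
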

 
 \begin{proof}
 Let us show first that $Sub(\mathbb{1})$ where $\mathbb{1}$ is the terminal object of the topos $\mathcal{C}$ is a Heyting algebra, i.e. categorically $Sub(\mathbb{1})$ is finitely complete and finitely co-complete, and it is Cartesian closed. We have seen in Proposition~\ref{is a lattice} that $Sub(\mathbb{1})$ is both finitely complete and finitely co-complete. Finite product is defined by the infimum of equivalence classes of monics introduced in Proposition~\ref{is a lattice}. It remains to show that $Sub(\mathbb{1})$ has exponentials. Let $[U \rightarrowtail \mathbb{1}]$ and $[V \rightarrowtail \mathbb{1}]$ be two subobjects of $\mathbb{1}$. As $\mathcal{C}$ is a topos, $U$ and $V$ have an exponential $V^U$. As $\mathbb{1}$ is terminal in $\mathcal{C}$, there is a unique morphism $v^u : V^U \to \mathbb{1}$. It remains to show that the morphism $v^u$ is a monic. Let $f,g : S \to V^U$ be two morphisms. Under the (exponential) bijection, we have that $Hom_\mathcal{C}(S,V^U) \simeq Hom_\mathcal{C}(S \times U,V)$, and then we have for $f$ and $g$ corresponding morphisms $\overline{f},\overline{g} : S \times U \to V$. Let us suppose that $v^u \circ f = v^u \circ g$. This means that  $\overline{f} \circ u = \overline{g} \circ u$. But $u$ is a monic and then $\overline{f} = \overline{g}$ which implies by bijection that $f = g$. We then deduce that $v^u$ is a monic, and we conclude that $Sub(\mathbb{1})$ is a Heyting algebra. \\ It is not difficult to show that $Sub(X) \simeq Sub_{\mathcal{C}_{/X}}(Id_X)$ where $\mathcal{C}_{/X}$ is the slice category over $X$ and $Sub_{\mathcal{C}_{/X}}(Id_X)$ is the set of subobjects of the object $Id_X$ in the slice category $\mathcal{C}_{/X}$. By the fundamental theorem of topos theory~\footnote{This fundamental theorem states that if $\mathcal{C}$ is a topos, then for every object $X \in |\mathcal{C}|$, $\mathcal{C}_{/X}$ is a topos (see~\cite{BW85} for a proof of this result).} $\mathcal{C}_{/X}$ is also a topos, and then $Sub_{\mathcal{C}_{/X}}(Id_X)$ is a Heyting algebra ($Id_X$ is terminal in $\mathcal{C}_{/X}$), whence we conclude under the isomorphism $Sub(X) \simeq Sub_{\mathcal{C}_{/X}}(Id_X)$ that $Sub(X)$ is a Heyting algebra.    
 \end{proof}
 
 \subsection{Mathematical morphology on sets}
 
 The most abstract way to define dilation and erosion is as follows. Let $(L, \preceq)$ and $(L', \preceq')$ be two (complete) lattices. Let $\vee$ and $\vee'$ denote the supremum in $L$ and in $L'$, associated with $\preceq$ and $\preceq'$, respectively. Similarly, let $\wedge$ and $\wedge'$ denote the infimum in $L$ and in $L'$, respectively. An algebraic dilation is an operator $\delta : L \to L'$  that commutes with the supremum, i.e.
\[
\forall (a_i)_{i \in I} \in L, \delta(\vee_{i \in I} a_i) = \vee'_{i \in I} \delta(a_i)
\]
where $I$ denotes any index set (not fixed). 
An algebraic erosion is an operator $\varepsilon : L' \to L$ that commutes with the infimum, i.e.
\[
\forall (a_i)_{i \in I} \in L', \varepsilon(\wedge'_{i \in I} a_i) = \wedge_{i \in I} \varepsilon(a_i)
\]
where $I$ denotes any index set (not fixed). 
It follows that $\delta$ preserves the least element $\bot$ in $L$ ($\delta(\bot) = \bot$), and $\varepsilon$ preserves the greatest element $\top'$ in $L'$ ($\varepsilon(\top')=\top'$). 

Now, in practice, morphological operators are often defined on sets (i.e. $L$ and $L'$ are the powersets or finite powersets of given sets $S$ and $S'$, and often $S=S'$ and $L=L'$) through a structuring element designed in advance. Mathematical morphology has been mainly applied in image processing. In this particular case, the set $S$ is an Abelian group equipped with an additive internal law $+$, and its elements represent image points. Let us recall here the basic definitions of dilation and erosion $\delta_B$ and $\varepsilon_B$ in this particular case, where $B$ is a set called structuring element, under an additional hypothesis of invariance under translation. Let $X$ and $B$ be two subsets of $S$. The dilation and erosion of $X$ by the structuring element $B$, denoted respectively by $\delta_B(X)$ and $\varepsilon_B(X)$, are defined as follows:
\begin{center}
$\delta_B(X) = \{x \in S \mid \check{B}_x \cap X \neq \emptyset\}$ \\
$\varepsilon_B(X) = \{x \in S \mid B_x \subseteq X\}$
\end{center}
where 
$B_x = \{x + b \in S \mid b \in B\}$ where $+$ is the additive law associated with $S$ (e.g. translation), and $\check{B}$ is the symmetrical of $B$ with respect to the origin of space. An example is given in Figure~\ref{fig:cell} in $\mathbb{Z}^2$ for a binary image. This simple example illustrates the intuitive meaning of dilation (expanding the white objects according to the size and shape of the structuring element) and erosion (reducing the white objects according to the structuring element).

\begin{figure}[htbp]
\centerline{
 \includegraphics[width=4cm]{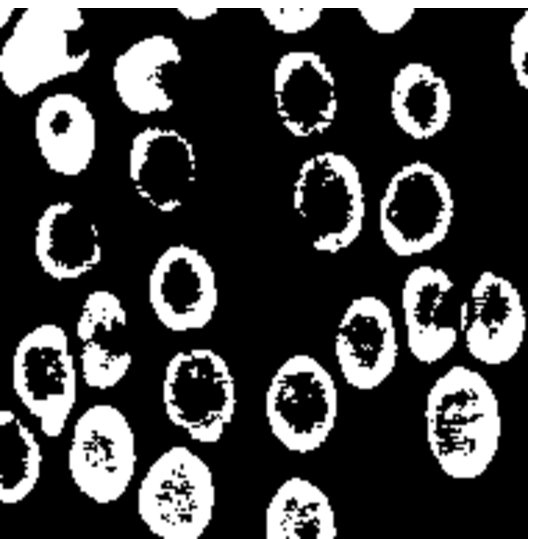} \hspace*{.1cm}
 \includegraphics[width=1cm]{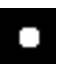} \hspace*{.1cm}
 \includegraphics[width=4cm]{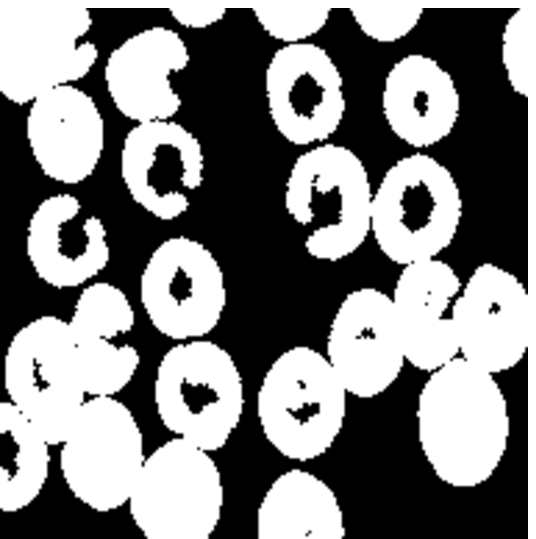} \hspace*{.1cm}
 \includegraphics[width=4cm]{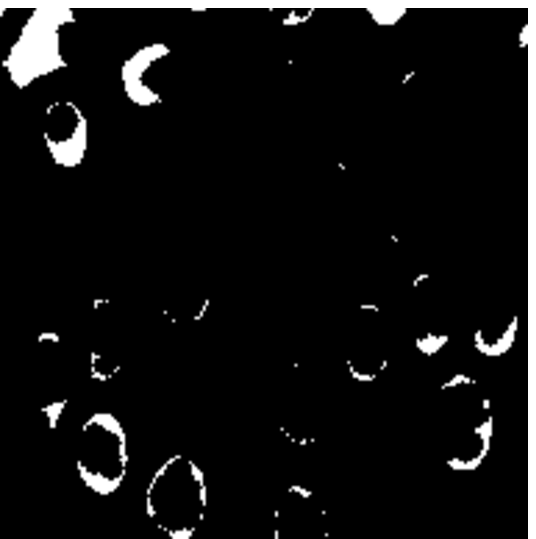} 
}
\caption{From left to right: binary image, structuring element, dilation, erosion.}
\label{fig:cell}
\end{figure}

Now, in a more generally setting, we consider any set $S$, not necessarily endowed with a particular structure, and the structuring element $B$ can then be seen as a binary relation $R_B$ on the set $S$ ($R_B \subseteq S \times S$) as follows: $(x,y) \in R_B \Longleftrightarrow y \in B_x$~\cite{BHR07,madrid2019}.\footnote{In the particular example of a set with an additive law $+$, the corresponding relation would be $(x,y) \in R_B$ iff $\exists b \in B, y = x+b$.} This is the way we will consider structuring elements in this paper, as done in previous work, in particular for mathematical morphology on graphs (see e.g.~\cite{IB:DAM-15,cousty2013,MS09}, among others) or logics (see e.g.~\cite{MA:AIJ-18,MA:IJAR-18,AB19,BHR07,Bloch02,BL02,Gorogiannis2008a}).

The most important properties of dilation and erosion based on a structuring element are the following ones~\cite{BHR07,IGMI_NajTal10,Ser82}:
\begin{itemize}
\item {\bf Monotonicity:} if $X \subseteq Y$, then $\delta_B(X) \subseteq \delta_B(Y)$ and $\varepsilon_B(X) \subseteq \varepsilon_B(Y)$; if $B \subseteq B'$, then $\delta_B(X) \subseteq \delta_{B'}(X)$ and $\varepsilon_{B'}(X) \subseteq \varepsilon_B(X)$. 
\item If for every $x \in E$, $x \in B_x$ (and this condition is actually necessary and sufficient), then 
\begin{itemize}
\item {\bf $\delta_B$ is extensive:} $X \subseteq \delta_B(X)$;
\item {\bf $\varepsilon_B$ is anti-extensive:} $\varepsilon_B(X) \subseteq X$.
\end{itemize}
\item {\bf Commutativity:} $\delta_B(X \cup Y) = \delta_B(X) \cup \delta_B(Y)$ and $\varepsilon_B(X \cap Y) = \varepsilon_B(X) \cap \varepsilon_B(Y)$ (and similar expressions for infinite or empty families of subsets).  
\item {\bf Adjunction:} $X \subseteq \varepsilon_B(Y) \Leftrightarrow \delta_B(X) \subseteq Y$.
\item {\bf Duality:} $\varepsilon_B(X) = [\delta_{\check{B}}(X^C)]^C$ where $\_^C$ is the set-theoretical complementation. 
\end{itemize}
Hence, $\delta_B$ and $\varepsilon_B$ are particular cases of general algebraic dilation and erosion on the lattice $(\mathcal{P}(S),\subseteq)$.

\section{Morpho-categories}
\label{sec:morpho-categories}

Morpho-categories will be categories on which we will define both basic mathematical morphology operations of erosion and dilation based on a structuring element.

In the following, we will denote objects of categories by the lowercase latin letters $c$, $d$ and $e$.

\subsection{Definition}

\begin{definition}[Morpho-category]
\label{morpho-category}
A {\bf morpho-category} $\mathcal{C}$ is a category together with a functor $U : \mathcal{C} \to Set$, which is further complete and co-complete, and such that:

\begin{itemize}
\item morphisms in $\mathcal{C}$ define a partial order $\rightarrowtail$,\footnote{We choose the notation $\rightarrowtail$ to denote the partial order because necessarily morphisms in $\mathcal{C}$  are monics as for all objets $c,c' \in |\mathcal{C}|$, there is at most one morphism in $Hom_\mathcal{C}(c,c')$.}
\item $U$ preserves monics\footnote{and then for every $f \in Hom_\mathcal{C}$, $U(f)$ is an injective mapping.} and colimits.
\end{itemize}
\end{definition} 

\begin{notation}
$\mathcal{C}$ formulates internally\footnote{By ``internally'', we mean that the traditional definition of complete lattice which takes place in sets is defined here to other categorical structures.} the notion of complete lattice. We will then use the standard notations of complete lattices, and denote the limit and colimit of diagrams $D$ by $\bigwedge D$ and $\bigvee D$, respectively. 
\end{notation}

The archetypical example of morpho-category is the complete lattice $(\mathcal{P}(S),\subseteq)$ for a given set $S$ together with the forgetful functor identity on $\mathcal{P}(S)$. In the next section, we will present more examples of morpho-categories, all of them generated from presheaf toposes. 

\medskip
As $\mathcal{C}$ is complete and co-complete, it has initial and terminal objects which are, respectively, the limit and the colimit of the empty diagram. The initial object is denoted by $\emptyset$, and the terminal one by $t$.  

\begin{remark}
In all the examples that we will develop in this paper, the functor $U$ is faithful, and then $\mathcal{C}$ is a concrete category. However, this property is not useful for us to define our two operations of erosion and dilation with good properties.  
\end{remark}

\begin{remark}
\label{remark:expoential}
{Given a morpho-category $\mathcal{C}$, we can define for all objects $c,d \in |\mathcal{C}|$, the object $d^c$ as the supremum of the diagram $\{e \in |\mathcal{C}| \mid e \wedge c \rightarrowtail d\}$. It is known that the object $d^c$ is the exponential of $d$ and $c$ if $\mathcal{C}$satisfies the infinite distributive law:
$$c \wedge \bigvee \mathcal{D} = \bigvee \{c \wedge e \mid e \in |\mathcal{D}|\}$$
where $\mathcal{D}$ is a diagram. Indeed, we have that $c \wedge \bigvee\{e \in |\mathcal{C}| \mid e \wedge c \rightarrowtail d\} = \bigvee \{e \wedge c \mid e \wedge c \rightarrowtail d\}$, and then we can conclude that $d^c \wedge c \rightarrowtail d$. The existence of the exponential transpose is obvious.}
\end{remark}

\subsection{Presheaf Toposes as generators of morpho-categories}
\label{Ex:morpho-categories}

In any complete topos $\mathcal{C}$, given an object $c \in \mathcal{C}$,  we have seen that $Sub(c)$ is a complete lattice (see Proposition~\ref{is a lattice}). So, in the case of a presheaf topos whose base category is $\mathcal{B}$, given a presheaf $F : \mathcal{B}^{op} \to Set$, each $b \in |\mathcal{B}|$ gives rise to a  forgetful functor $U_b : [\alpha] \mapsto dom(\alpha)(b)$ where $[\alpha] \in Sub(F)$. By definition of the order $\preceq$ on $Sub(c)$, this functor preserves monics. Let us then show that it preserves colimits. First, let us remark that for every equivalence class $[\alpha] \in Sub(F)$ there exists the presheaf $F_{[\alpha]} : \mathcal{B}^{op} \to Set$ which to every $b' \in |\mathcal{B}|$ satisfies $F_{[\alpha]}(b') \subseteq F(b')$. Therefore, given a diagram $\mathcal{D}$ in $Sub(c)$, the colimits of $\mathcal{D}$ is defined by the equivalence class $[\beta : F_{\hookrightarrow} \Rightarrow F]$ where $F_{\hookrightarrow} : \mathcal{B}^{op} \to F$ is the presheaf defined for every $b' \in |\mathcal{B}|$ by: 

$$F_{\hookrightarrow}(b') = \bigcup_{[\alpha] \in \mathcal{D}}F_{[\alpha]}(b')$$ 

By construction,  $[\beta]$ is the colimit of $\mathcal{D}$ in $Sub(F)$, and then we can deduce that $U_b$ preserves colimits. By following the same process, we can also show that $U_b$ preserves limits.

\medskip
Hence, for graphs and hypergraphs, we can define two kinds of forgetful functors, one which forgets on vertices and the other which forgets on edge and hyperedges~\footnote{Due to the definition of hypergraphs by presheaves, we can also forget about the set associated with the object $R$ of the base category (see Section~\ref{examples of preseheaves}). This forgetful functor is actually similar to the one on hyperedges.}. In presheaf topos of sets, we can associate a unique kind of forgetful functor $U : Sub(F) \to Set$ which, given a subobject $[\alpha]$, yields a set $S'$ such that the functor $F' : \bullet \mapsto S'$ is isomorphic to $dom(\alpha)$.

\subsection{Invariant properties}

Here, we state some properties on morpho-categories which will be useful for establishing some of our results in Section~\ref{structuring}. The adjective ``invariant'' means that these properties are invariant by equivalence of categories~\cite{CL16}. 

\medskip
Let $\mathcal{C}$ be a morpho-category whose forgetful functor is $U : \mathcal{C} \to Set$.

\begin{itemize}
\item {\bf Atomicity.} An {\bf atom} for $U$ is an object $c \in \mathcal{C}$ such that $U(c)$ is a singleton, and for every other object $c' \in |\mathcal{C}|$ with $U(c') = U(c)$, we have that $c \rightarrowtail c'$. \\ $\mathcal{C}$ {\bf has atoms for} $U$ if for every $x \in U(t)$ ($t$ is terminal in $\mathcal{C}$), there exists an atom $c_x \in |\mathcal{C}|$ with $U(c_x) = \{x\}$. \\ $\mathcal{C}$ is said {\bf atomic for} $U$ if any object $c \in |\mathcal{C}|$ is sup-generated from atoms, i.e. $c$ is the colimit of some diagrams of atoms\footnote{This notion of atomicity can be compared with the notion of atom for toposes (see for instance~\cite{CL16}).}. 
\item {\bf Boolean.} $\mathcal{C}$ is said {\bf Boolean} if $\mathcal{C}$ is Cartesian closed~\footnote{We have seen in Remark~\ref{remark:expoential} that in this case, for all the objects $c,d \in |\mathcal{C}|$, the exponential $d^c$ is given as the supremum of the diagram $\{e \in |\mathcal{C}| \mid e \wedge c \rightarrowtail d\}$.} and for every object $c \in |\mathcal{C}|$, we have further $c \vee \overline{c} = t$ where $\overline{c} = \emptyset^c$ (the exponential of the object $c$ and the initial object $\emptyset$ of $\mathcal{C}$). 
\end{itemize}

It is obvious from definitions that these properties are invariant by equivalence of categories. 

\medskip
All the examples developed in this paper have atoms for all the forgetful functors which have been defined on them. For the morpho-categories over the topos of sets, atoms are singletons, and all of them are atomic for any forgetful functor $U$. For the topos of graphs, if the forgetful functor $U$ forgets on vertices, then atoms are all the graphs with only one vertex and no edge. Now, if the forgetful functor $U$ forgets on edges, then atoms are all the graphs with only one edge $e$, and the source and target of $e$ as vertices. Morpho-categories over the topos of graphs are only atomic for the forgetful functors which forget on edges. 
Atoms for hypergraphs are similarly defined. 

In the same way, all examples developed in this paper are Cartesian closed  morpho-categories because for toposes, given an object $X$, $Sub(X)$ is a Heyting algebra (see Proposition~ \ref{is an heyting algebra}). However, only morpho-categories over sets are Boolean. 

\begin{proposition}
Let $\mathcal{C}$ be a morpho-category which has atoms. Then, for every $x \in U(t)$, $c_x$ is unique.
\end{proposition}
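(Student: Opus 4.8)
The plan is to read the claimed ``uniqueness'' as uniqueness of the object $c_x$ as an element of the poset $(|\mathcal{C}|,\rightarrowtail)$, and to deduce it directly from antisymmetry of $\rightarrowtail$. Recall that, by the definition of a morpho-category, the morphisms of $\mathcal{C}$ form a partial order on $|\mathcal{C}|$; in particular there is at most one morphism between any two objects, so $c \rightarrowtail c'$ together with $c' \rightarrowtail c$ forces $c = c'$. This antisymmetry is the only ingredient really needed.

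Concretely, I would argue as follows. Fix $x \in U(t)$ and suppose $c$ and $c'$ are both atoms for $U$ with $U(c) = \{x\} = U(c')$. Apply the defining property of the atom $c$: since $c'$ is an object satisfying $U(c') = U(c)$, there is a morphism $c \rightarrowtail c'$. Symmetrically, apply the defining property of the atom $c'$: since $c$ satisfies $U(c) = U(c')$, there is a morphism $c' \rightarrowtail c$. Invoking antisymmetry of $\rightarrowtail$ then gives $c = c'$. Hence the atom $c_x$ whose existence is guaranteed by the hypothesis ``$\mathcal{C}$ has atoms for $U$'' is unique.

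I do not expect any genuine obstacle here: the argument is a two-line application of antisymmetry. The one point worth a word of care is the scope of the quantifier ``for every other object $c'$'' in the definition of an atom; it must be read as ranging over all objects $c'$ with $U(c') = U(c)$, in particular over any second atom lying over $x$, which is precisely what makes the mutual comparison above legitimate. If one insists on the literal word ``other'', the case $c' = c$ is trivial and the definition is then applied to $c' \neq c$, giving the same conclusion.
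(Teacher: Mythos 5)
Your argument is correct and is essentially identical to the paper's: both proofs take two atoms $c$ and $c'$ over the same $x$, use the defining property of an atom to obtain $c \rightarrowtail c'$ and $c' \rightarrowtail c$, and conclude by antisymmetry of the partial order. Nothing to add.
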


\begin{proof}
Suppose that there exists another atom $c'_x$  with $U(c'_x) = \{x\}$. Then, we have both $c_x \rightarrowtail c'_x$ and $c'_x \rightarrowtail c_x$, and we can conclude by anti-symmetry that $c_x = c'_x$. 
\end{proof}

As Cartesian closed morpho-categories are internal Heyting algebras, they satisfy a number of properties such as distributivity, commutativity, order-reversing of complementation, etc. Here, we give and prove some of them that will be used in the following.

\begin{proposition}
Let $\mathcal{C}$ be a Boolean morpho-category. Then we have for every object $c,c' \in |\mathcal{C}|$: 

\begin{enumerate}
\item $c \wedge \overline{c} = \emptyset$, 
\item for every $c' \in |\mathcal{C}|$ such that $c \wedge c' = \emptyset$, $c' \rightarrowtail \overline{c}$,
\item $c \rightarrowtail c' \Longrightarrow \overline{c'} \rightarrowtail \overline{c}$, and
\item $\overline{\overline{c}} = c$.
\end{enumerate}
\end{proposition}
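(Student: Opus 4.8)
The plan is to reduce all four statements to a single characterization of the object $\overline{c} = \emptyset^{c}$ (the exponential of $c$ and the initial object): for every object $e \in |\mathcal{C}|$,
$$e \rightarrowtail \overline{c} \quad\Longleftrightarrow\quad e \wedge c = \emptyset .$$
This is nothing but the exponential adjunction $(-\wedge c)\dashv(-)^{c}$ read in the thin category $\mathcal{C}$, which gives $e \wedge c \rightarrowtail \emptyset \Leftrightarrow e \rightarrowtail \emptyset^{c}$, combined with the observation that $\emptyset$ is initial, so that $x \rightarrowtail \emptyset$ forces $x = \emptyset$ by anti-symmetry of $\rightarrowtail$; equivalently, it can be read off the presentation of $\overline{c}$ as $\bigvee\{e \in |\mathcal{C}| \mid e \wedge c \rightarrowtail \emptyset\}$ recalled in Remark~\ref{remark:expoential} and in the definition of Boolean morpho-category. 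Note that this lemma uses only Cartesian closedness, not the Boolean hypothesis.

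Granting the lemma, I would prove the items in the order 2, 1, 3, 4. Item~2 is immediate: if $c \wedge c' = \emptyset$, instantiate the lemma at $e = c'$. Item~1 follows by instantiating the lemma at $e = \overline{c}$ and using reflexivity $\overline{c} \rightarrowtail \overline{c}$, which yields $\overline{c} \wedge c = \emptyset$, hence $c \wedge \overline{c} = \emptyset$ since meets, being categorical products, are commutative. For item~3, assume $c \rightarrowtail c'$; monotonicity of $\wedge$ gives $\overline{c'} \wedge c \rightarrowtail \overline{c'} \wedge c' = \emptyset$ by item~1, so $\overline{c'} \wedge c = \emptyset$, and then item~2 gives $\overline{c'} \rightarrowtail \overline{c}$.

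For item~4, the inclusion $c \rightarrowtail \overline{\overline{c}}$ is again just item~2 applied with $\overline{c}$ in the role of $c$, using $\overline{c} \wedge c = \emptyset$ from item~1. The converse $\overline{\overline{c}} \rightarrowtail c$ is the one place the Boolean axiom $c \vee \overline{c} = t$ enters: using distributivity (Cartesian closed morpho-categories being internal Heyting algebras) together with item~1 applied to $\overline{c}$, one computes
$$\overline{\overline{c}} = \overline{\overline{c}} \wedge t = \overline{\overline{c}} \wedge (c \vee \overline{c}) = (\overline{\overline{c}} \wedge c) \vee (\overline{\overline{c}} \wedge \overline{c}) = (\overline{\overline{c}} \wedge c) \vee \emptyset = \overline{\overline{c}} \wedge c \rightarrowtail c ,$$
and anti-symmetry of $\rightarrowtail$ closes the case.

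The only genuinely non-formal step is this last inclusion $\overline{\overline{c}} \rightarrowtail c$: in a general (non-Boolean) Cartesian closed morpho-category double negation is merely inflationary, i.e. one only gets $c \rightarrowtail \overline{\overline{c}}$, and the reverse inclusion fails — for instance in the topos of graphs, which is Heyting but not Boolean. So the main obstacle is simply to recognise that excluded middle together with distributivity must be invoked precisely at that point; everything else is a mechanical application of the adjunction characterization.
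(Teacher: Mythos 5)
Your proof is correct, and it takes a mildly but genuinely different route from the paper's. The paper proves item~1 from the evaluation arrow (which is the counit of the same adjunction you invoke), but then derives item~2 from the Boolean law $c \vee \overline{c} = t$ together with distributivity ($c' = c' \wedge (c \vee \overline{c}) = (c \wedge c') \vee (c' \wedge \overline{c}) = c' \wedge \overline{c}$), and proves item~3 by a small contradiction argument (if $c \wedge \overline{c'} = c'' \neq \emptyset$ then $c''$ would sit below both $c'$ and $\overline{c'}$). You instead factor everything through the single adjunction characterization $e \rightarrowtail \overline{c} \Leftrightarrow e \wedge c = \emptyset$, which makes items~1--3 consequences of Cartesian closedness alone and confines the use of excluded middle plus distributivity to the one inclusion $\overline{\overline{c}} \rightarrowtail c$ in item~4 (where both proofs coincide). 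What your organization buys is generality and a cleaner accounting of hypotheses: items~1--3 are seen to hold in any Cartesian closed morpho-category, i.e.\ in any internal Heyting algebra, not just Boolean ones, and your item~3 via monotonicity of $\wedge$ avoids the paper's slightly elliptical contradiction step. What the paper's version buys is a more self-contained elementary computation that never needs the adjunction to be stated explicitly. Your closing remark that double negation is only inflationary in the non-Boolean case, with the topos of graphs as witness, is accurate and correctly locates the unique point where Booleanness is indispensable.
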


\begin{proof}
The first property is a direct consequence of the evaluation arrow. Indeed, we have that $\overline{c} \wedge c \rightarrowtail \emptyset$, and then $\overline{c} \wedge c = \emptyset$ ($\emptyset$ is initial in $\mathcal{C}$). \\ The second point is a consequence of distributive laws. Indeed, as $\mathcal{C}$ is Boolean, it is distributive, we have that $c' = c' \wedge (c \vee \overline{c}) = (c \wedge c') \vee (c' \wedge \overline{c}) = c' \wedge \overline{c}$ (the last equation comes from the fact that $c \wedge c' = \emptyset$). 
\\ To show the third point, let us suppose that $c \rightarrowtail c'$. Let us show that $c \wedge \overline{c'} = \emptyset$. Let us suppose the opposite, i.e. let us suppose that there exists $c'' \neq \emptyset$ such that $c \wedge \overline{c'} = c''$. We then have that both $c'' \rightarrowtail c'$ and $c'' \rightarrowtail \overline{c'}$ which is not possible. Hence, we have that $c \wedge \overline{c'} = \emptyset$ which by the second property leads to $\overline{c'} \rightarrowtail \overline{c}$. \\ The last point is also a simple consequence of distributive laws. Let us first show that $\overline{\overline{c}} \rightarrowtail c$. As $\mathcal{C}$ is distributive, we have that $\overline{\overline{c}} = \overline{\overline{c}} \wedge (c \vee \overline{c}) = (\overline{\overline{c}} \wedge c) \vee (\overline{\overline{c}} \wedge \overline{c}) = \overline{\overline{c}} \wedge c$ (the last equation comes from the fact that $\overline{\overline{c}} \wedge \overline{c}  = \emptyset$).  The proof to show that $c \rightarrowtail \overline{\overline{c}}$ is similar.  
\end{proof}
\section{Mathematical morphology over structuring element}
\label{structuring}

Here, we propose to generalize the definitions of erosion and dilation based on structuring elements in our categorical setting of morpho-categories. Taking inspiration from classical mathematical morphology where one probes the image with ``simple'' geometrical shapes called structuring elements, we propose to consider a structuring element as an object of the morpho-category $\mathcal{C}$ that we will use to construct morphological operators by ``matching'' this structuring object at different ``positions'' with more ``complex'' objects of $\mathcal{C}$.

\subsection{Erosion and dilation in morpho-categories}

Let $\mathcal{C}$ be a morpho-category, and $U$ its associated forgetful functor. As the morphisms between objects in $\mathcal{C}$ are unique, these are monics. Therefore, according to our conventions, given two objects $c,c' \in |\mathcal{C}|$, we will denote in the following $c \rightarrowtail c'$ the unique morphism between $c$ and $c'$ when it exists. Moreover, in the following, except when it will be explicitly stated, every set $\{c \in |\mathcal{C}| \mid P(c)\}$ where $P$ is a property resting on $|\mathcal{C}|$, will denote the diagram whose the objects are all the elements $c$ and morphisms are all the monics in $\mathcal{C}$ between them. Finally, given an object $c \in |\mathcal{C}|$ and an element $x \in U(c)$, we will denote by $x_t$ the element of $U(t)$ (we recall that $t$ is the terminal object of $\mathcal{C}$) such that $x_t = U(c \rightarrowtail t)(x)$. 

\begin{definition}[Structuring element]
A {\bf structuring element} is a mapping $b : U(t) \to |\mathcal{C}|$. 
\end{definition} 

\begin{example}
\label{structuring element set}
In the morpho-category $(\mathcal{P}(S),\subseteq)$, given a set $S$, any mapping $b : S \to \mathcal{P}(S)$ is a structuring element. This structuring element  associates to an element $x \in S$ a set $b(x)$ whose elements are in relation with $x$. \\ When mathematical morphology is applied in image processing, we saw that in this particular case, $S$ is an Abelian group equipped with an additive law $+$, and then given a point $x \in S$, $b(x) = \{x + b \mid b \in B\}$ where $B \subseteq S$. 
\end{example}

\begin{example}
\label{structuring element in toposes}
Let us suppose a complete and co-complete topos $\mathcal{C}$ together with a forgetful functor $U : \mathcal{C} \to Set$ satisfying the properties of Definition~\ref{morpho-category}. Given an object $c \in |\mathcal{C}|$, any mapping $b : U([Id_c]) \to |Sub(c)|$ is a structuring element. For instance, in the topos of directed graphs, i.e. the category of presheaves $F : \mathcal{B} \to Set$ where $\mathcal{B}$ is the base category~\footnote{Be careful, here $t$ designates the morphism between $V$ and $E$ in the base category $\mathcal{B}$, and should not be confused with the terminal object in $\mathcal{C}$.} 
 $$\xymatrix{
 V \ar@/^/[r]^s\ar@/_/[r]_t &  E \\
  }$$

Given a graph $G : \mathcal{B}^{op} \to Set$, if we suppose that the forgetful functor $U$ forgets on vertices with $U([Id_G]) = G(V)$, we can consider the structuring element $b : G(V) \to Sub(G)$ which to any $x \in G(V)$ associates the subobject $[G_x \Rightarrow G]$ where $G_x : \mathcal{B}^{op} \to Set$ is defined by: 
\begin{itemize}
\item $G_x(V) = \{y \in G(V) \mid \exists e \in G(E), S_e = \{x,y\}\} \cup \{x\}$, and 
\item $G_x(E) = \{e \in G(E) \mid G(s)(e) = x~\mbox{or}~G(t)(e) = x\}$. 
\end{itemize}
where $S_e = \{G(s)(e),G(t)(e)\}$ for every edge $e \in G(E)$.
\end{example}

\subsubsection{Algebraic structures of structuring elements}

Let $StEl$ denote the set of all structuring elements of $\mathcal{C}$. A partial order on $StEl$ can be defined using pointwise injection: $b \preceq b' \Longleftrightarrow (\forall x \in U(t), b(x) \rightarrowtail b'(x))$. 

\begin{proposition}
$(StEl,\preceq)$ is a complete lattice.
\end{proposition}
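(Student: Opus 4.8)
The plan is to show that $(StEl,\preceq)$ inherits its complete-lattice structure pointwise from the complete lattice $\mathcal{C}$ (which we recall is complete and co-complete, hence an internal complete lattice in the sense of the Notation above). First I would observe that $\preceq$ is a partial order on $StEl$: reflexivity and transitivity are immediate from the corresponding properties of $\rightarrowtail$ on $\mathcal{C}$, while antisymmetry follows because $b \preceq b'$ and $b' \preceq b$ give, for each $x \in U(t)$, both $b(x) \rightarrowtail b'(x)$ and $b'(x) \rightarrowtail b(x)$, whence $b(x) = b'(x)$ by antisymmetry of $\rightarrowtail$ in the morpho-category; since this holds for every $x$, we get $b = b'$ as mappings.

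Next I would exhibit arbitrary infima and suprema. Given a family $(b_i)_{i\in I}$ of structuring elements, define $b^\wedge : U(t) \to |\mathcal{C}|$ by $b^\wedge(x) = \bigwedge \{ b_i(x) \mid i \in I\}$, the limit in $\mathcal{C}$ of the diagram consisting of the objects $b_i(x)$ (and the monics between them), and dually $b^\vee(x) = \bigvee \{b_i(x) \mid i \in I\}$ using the colimit. These are well-defined structuring elements because $\mathcal{C}$ is complete and co-complete. To check $b^\wedge$ is the infimum: for each $x$, $b^\wedge(x) \rightarrowtail b_i(x)$ by the universal property of the limit, so $b^\wedge \preceq b_i$ for all $i$; and if $c \preceq b_i$ for all $i$, then $c(x) \rightarrowtail b_i(x)$ for all $i$ and $x$, so $c(x)$ factors through the limit $b^\wedge(x)$, giving $c \preceq b^\wedge$. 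The argument for $b^\vee$ as supremum is dual, using the colimit's universal property. Finally, the top element is $x \mapsto t$ and the bottom element is $x \mapsto \emptyset$; equivalently these arise as the empty-family sup and inf. Hence $(StEl,\preceq)$ is a complete lattice.

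I do not expect any genuine obstacle here: the whole statement is a routine pointwise lifting of completeness from the base morpho-category, and the only points needing a word of care are (i) that the objects $b_i(x)$ together with the (unique, monic) morphisms among them do form a legitimate diagram in $\mathcal{C}$ — which holds by the convention, stated just before the Structuring element definition, that such a set denotes the diagram with all monics between its objects — and (ii) that the factorizations produced by the universal properties are automatically the (unique) monics $\rightarrowtail$, which holds because every morphism in $\mathcal{C}$ is a monic and $Hom_\mathcal{C}(c,c')$ has at most one element. So the mild subtlety, if any, is purely bookkeeping about which diagram the limit/colimit is taken over, not a mathematical difficulty.
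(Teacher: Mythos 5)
Your proof is correct and follows essentially the same route as the paper's: both define the infimum and supremum of a family $(b_i)_{i\in I}$ pointwise via the limits and colimits of the diagrams $\{b_i(x)\mid i\in I\}$ in $\mathcal{C}$, with $x\mapsto t$ and $x\mapsto\emptyset$ as top and bottom. You merely spell out the partial-order axioms and the universal-property verifications that the paper leaves as ``obvious.''
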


\begin{proof}
$\mathcal{C}$ is a morpho-category. Hence, for all $(b_i)_{i \in I} \in StEl$, where $I$ denotes any index set, let us set
\begin{center}
$(\bigwedge_{i \in I} b_i)(x) = \bigwedge \{b_i(x) \mid i \in I\}$ \\
$(\bigvee_{i \in I} b_i)(x) = \bigvee \{b_i(x) \mid i \in I\}$
\end{center}
which are obviously the infimum (greatest lower bound) and the supremum (least upper bound) of the family of $b_i$, respectively.
The greatest element is the full structuring element defined by $full(x) = t$ (the terminal object in $\mathcal{C}$), and the least element is the empty structuring element defined by $emp(x) = \emptyset$ (the intial object in $\mathcal{C}$). 
\end{proof}

Likewise, an internal composition law $\star$ on $StEl$ can be defined. Let $b,b' : U(t) \to |\mathcal{C}|$ be two structuring elements. Let $b \star b' : U(t) \to |\mathcal{C}|$ be the structuring element defined for every $x \in U(t)$ by: 
$$(b \star b')(x) = \bigvee \{b'(y_t) \mid y \in U(b(x))\}$$ 
We show in the next proposition that the operation $\star$ is associative. To equip $StEl$ with an identity element, we need to impose that the category $\mathcal{C}$ has atoms for the forgetful functor $U$. Indeed, for morpho-categories $\mathcal{C}$ which have atoms for $U$, we can define the structuring element $sgt : x \mapsto c_x$, that will  be shown to be the identity element.

\begin{proposition}
\label{is a monoid}
Under the condition that $U$ reflects monics, $(StEl,\star,sgt)$ is a monoid.
\end{proposition}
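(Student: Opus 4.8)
The plan is to verify the three monoid axioms: that $\star$ is a well-defined binary operation on $StEl$, that it is associative, and that $sgt$ is a two-sided identity. Since $b \star b'$ is defined pointwise as a supremum of a set of objects of $\mathcal{C}$ and $\mathcal{C}$ is co-complete, $b \star b'$ is again a map $U(t) \to |\mathcal{C}|$, so closure is immediate; the real content is associativity and the identity law, with the latter being where the hypothesis that $U$ reflects monics is used.

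First I would handle associativity. Fix $x \in U(t)$ and unfold both $((b \star b') \star b'')(x)$ and $(b \star (b' \star b''))(x)$. On the one side, $(b \star b')(x) = \bigvee\{b'(y_t) \mid y \in U(b(x))\}$, and then applying $\star b''$ gives a supremum over $z \in U\big(\bigvee\{b'(y_t)\mid y \in U(b(x))\}\big)$ of the objects $b''(z_t)$. Here I would invoke the fact that $U$ preserves colimits (part of the definition of morpho-category), so $U\big(\bigvee\{b'(y_t)\mid y\in U(b(x))\}\big)$ is the corresponding colimit in $Set$ of the $U(b'(y_t))$; hence every such $z$ factors through some $U(b'(y_t))$ with $z_t$ computed compatibly via the cocone maps into $t$. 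On the other side, $(b'\star b'')(y_t) = \bigvee\{b''(z_t)\mid z \in U(b'(y_t))\}$, and then $(b\star(b'\star b''))(x) = \bigvee\{(b'\star b'')(y_t)\mid y\in U(b(x))\} = \bigvee_{y\in U(b(x))}\bigvee_{z\in U(b'(y_t))} b''(z_t)$. The two iterated suprema range over the same indexing data once one identifies elements of the colimit $U(\bigvee_y b'(y_t))$ with pairs $(y,z)$ modulo the colimit identifications, and since $b''(z_t)$ depends only on $z_t \in U(t)$ — which is colimit-invariant — the two suprema agree. This is essentially a "Fubini for colimits" argument and should go through cleanly.

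Next I would check the identity law, i.e. $b \star sgt = sgt \star b = b$ for every structuring element $b$, where $sgt(x) = c_x$ is the atom over $x$. For $b \star sgt$: by definition $(b\star sgt)(x) = \bigvee\{sgt(y_t) \mid y \in U(b(x))\} = \bigvee\{c_{y_t} \mid y \in U(b(x))\}$. I claim this supremum equals $b(x)$. The inequality $c_{y_t} \rightarrowtail b(x)$ for each $y\in U(b(x))$ holds because $c_{y_t}$ is an atom and $U(c_{y_t}) = \{y_t\}$, so $c_{y_t}$ is the least object whose image contains $y_t$; this requires identifying $y \in U(b(x))$ with $y_t$ inside $U(b(x))$, which is legitimate since $U(b(x)\rightarrowtail t)$ is injective (as $U$ preserves monics), so the elements of $U(b(x))$ are faithfully indexed by their images $y_t$. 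Hence $\bigvee c_{y_t} \rightarrowtail b(x)$. For the reverse, the join $d := \bigvee_y c_{y_t}$ satisfies $U(d) \supseteq \{y_t \mid y\in U(b(x))\} = U(b(x))$ (using colimit-preservation of $U$ to compute $U(d)$), and in fact $U(d) = U(b(x))$; then the induced comparison — this is exactly where I need $U$ to reflect monics, so that the set-level inclusion $U(b(x)) \hookrightarrow U(d)$ lifts to a morphism $b(x) \rightarrowtail d$ in $\mathcal{C}$ — gives $b(x)\rightarrowtail d$, and by antisymmetry of $\rightarrowtail$ we get $d = b(x)$. The computation for $sgt \star b$ is similar but simpler: $(sgt \star b)(x) = \bigvee\{b(y_t) \mid y\in U(c_x)\}$, and $U(c_x) = \{x\}$ with $x_t = x$ (since $c_x$ maps to $t$ sending its unique element to $x$), so the join is over the single index $y=x$, giving $b(x)$.

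The main obstacle is the bookkeeping in associativity: making precise the identification of elements of $U(\bigvee_y b'(y_t))$ with equivalence classes of pairs $(y,z)$, and checking that $z_t$ (hence $b''(z_t)$) is well-defined on these classes. This is conceptually routine given that $U$ preserves colimits and that $(\,\cdot\,)_t$ is induced by the canonical maps to $t$, but it is the step that needs the most care to state without hand-waving. A clean way to organize it is to note that for any object $c$ and any $y \in U(c)$ one has $c_{y_t} \rightarrowtail c$ when $\mathcal{C}$ has atoms, and that $\bigvee\{c_{y_t} \mid y \in U(c)\} = c$ (the "atomicity of every object is not needed — only that the join of the atoms below $c$ indexed by $U(c)$ recovers $c$, which follows from the reflection hypothesis exactly as in the identity-law argument above"); then associativity reduces to the identity $\bigvee_{y}\bigvee_{z \in U(b'(y_t))} b''(z_t) = \bigvee\{b''(w_t) \mid w \in U\big(\bigvee_y b'(y_t)\big)\}$, which is the colimit-Fubini statement in its cleanest form.
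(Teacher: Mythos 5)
Your proof is correct and follows essentially the same route as the paper's: associativity by flattening the iterated supremum via colimit preservation of $U$ (your ``Fubini for colimits'' step is exactly what the paper does implicitly in one line), and the identity law by comparing $\bigvee\{c_{y_t}\mid y\in U(b(x))\}$ with $b(x)$ in both directions. The only minor point is that $c_{y_t}\rightarrowtail b(x)$ does not follow from the atom definition alone (atoms are minimal only among objects with the \emph{same} underlying singleton, not among all objects whose image contains it); the paper obtains this inequality from the reflection hypothesis, which you already invoke for the reverse direction, so the fix is immediate.
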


\begin{proof}
Let us first show that $\star$ is associative, i.e. let us show that $(b \star b') \star b'' = b \star (b' \star b'')$. From the definition of the operation $\star$, we have that 

$$\begin{array}{ll}
((b \star b') \star b'')(x) & = \bigvee \{b''(z_t) \mid  z \in U(b \star b'(x))\} \\ 
& = \bigvee \{b''(z_t) \mid \exists y \in U(b(x)), z \in U(b'(y_t))\} \\
& = (b \star (b' \star b''))(x)
\end{array}$$  

\medskip
Let us now show that $sgt$ is the identity element for $\star$, i.e. for every $x \in U(t)$, we have $(b \star sgt)(x) = b(x)$ and $(sgt \star b)(x) = b(x)$. By definition of the operations $\star$ and $sgt$, we have that $(b \star sgt)(x) = \bigvee \{c_{y_t} \mid y \in U(b(x))\}$. Obviously, we have for every $y \in U(b(x))$ that $U(c_{y_t}) \rightarrowtail U(b(x))$, and then as $U$ reflects monics $c_{y_t} \rightarrowtail b(x)$. By the properties of colimits we can write that $(b \star sgt)(x) \rightarrowtail b(x)$. \\ Now, as $U$ preserves colimits, we also have that $U(b(x)) \rightarrowtail U((b \star sgt)(x))$, and then as $U$ reflects monics, we conclude that $b(x) \rightarrowtail (b \star sgt)(x)$. \\ We can conclude that $(b \star sgt)(x) = b(x)$. 

\medskip
By definition, we have that $(sgt \star b)(x) = \bigvee \{b(y_t) \mid y \in U(sgt(x))\}$, whence we can directly conclude that $(sgt \star b)(x) = b(x)$.
\end{proof}

All the examples developed in this paper (sets, graphs, hypergraphs) satisfy the conditions of Proposition~\ref{is a monoid} because each forgetful functor $U$ is faithful and then reflects monics and epics. 

\subsubsection{Erosion and dilation: Definitions}

\begin{definition}[Erosion]
\label{erosion}
Let $b$ be a structuring element of $\mathcal{C}$. The {\bf erosion over $b$}, denoted by $\varepsilon[b]$, is the mapping from $\mathcal{C}$ to $\mathcal{C}$ defined for every object $d \in |\mathcal{C}|$ as follows.  Let $D^\varepsilon_d$ be the diagram $\{e \in |\mathcal{C}| \mid \forall v \in U(e), b(v_t) \rightarrowtail d\}$, then 
$$\varepsilon[b](d) = \bigvee  D^\varepsilon_d$$
\end{definition}

\begin{example}
\label{Ex:erosion for sets}
In the morpho-category $(\mathcal{P}(S),\subseteq)$ for a given set $S$, for every $A \subseteq S$, we have $D^\varepsilon_A = \{E \subseteq S \mid \forall x \in E, b(x) \subseteq A\}$. Hence, we have that $\varepsilon[b](A) = \bigcup_{E \in D^\varepsilon_d} E = \{x \in S \mid b(x) \subseteq A\}$. \\ In the case where $S$ is an Abelian group with $+$ as additive law, given a structuring element $B$, we have that $D^\varepsilon_A = \{E \subseteq S \mid \forall x \in E, \{x+p \mid p \in B\} \subseteq A\}$, i.e. $\varepsilon[b](A) = \{x \in S \mid B_x \subseteq A\}$ where $B_x = \{x + p \mid p \in B\}$ ($= b(x)$). \\ This shows that Definition~\ref{erosion} includes the classical definition of $\varepsilon$ by $B$ on sets (see Section~\ref{preliminaries} for a reminder of mathematial morphology on sets). 
\end{example}

\begin{example}
\label{Ex:erosion for graphs}
In the morpho-category $(Sub(c),\preceq)$ for a given object $c$ in a complete topos $\mathcal{C}$ equipped with a forgetful functor $U$ satisfying all the conditions of Definition~\ref{morpho-category}, for every $[f] \in Sub(c)$, the set $D^\varepsilon_{[f]}$ is $D^\varepsilon_{[f]} = \{[g] \in Sub(c) \mid \forall x \in U([g]), b(x) \preceq [f]\}$, and then $\varepsilon[b]([f])$ is the colimit of $D^\varepsilon_{[f]}$. 

To illustrate this, let us take again the example of the topos of directed graphs with the forgetful functor $U$ defined on vertices and the mapping $b$ as in Example~\ref{structuring element in toposes}. Let $G : \mathcal{B}^{op} \to Set$ be a graph. Let $[\alpha]$ be a subobject of $Sub(G)$. In this case, $\varepsilon[b]([\alpha]) = [\beta]$ where $\beta : G' \Rightarrow G$ is the natural transformation with $G' : \mathcal{B}^{op} \to Set$ the graph defined by:

\begin{itemize}
\item $G'(V) = \{x \in G(V) \mid \forall e \in dom(\alpha)(E),x \in \alpha_V(S_e) \Rightarrow e \in \alpha^{-1}_E(G_x(E))\}$,\footnote{Let us recall that here $S_e$ is the set $\{dom(\alpha)(s)(e),dom(\alpha)(t)(e)\}$. We also recall by definition of natural transformation that $\alpha_V$ and $\alpha_E$ are the injective mapings $\alpha_V : dom(\alpha)(V) \to G(V)$ and $\alpha_E : doma(\alpha)(E) \to G(E)$ ($dom(\alpha)$ is a presheaf, i.e. $dom(\alpha) : \mathcal{B}^{op} \to Set$).} and
\item $G'(E) = \{e \in G(E) \mid G(s)(e),G(t)(e) \in G'(V)\}$.
\end{itemize}
Hence $\beta_V$ and $\beta_E$ are inclusion mappings.
\end{example}

\begin{proposition}
\label{prop:stability1}
We have both that $\emptyset \in D^\varepsilon_d$ (and then $\bigwedge D^\varepsilon_d = \emptyset$) and $\varepsilon[b](d) \in D^\varepsilon_d$.
\end{proposition}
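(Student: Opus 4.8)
The statement has two parts: (1) the initial object $\emptyset$ belongs to the diagram $D^\varepsilon_d$, and (2) $\varepsilon[b](d) = \bigvee D^\varepsilon_d$ itself belongs to $D^\varepsilon_d$. I will treat them in this order.

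For part (1), I would unwind the membership condition: $\emptyset \in D^\varepsilon_d$ iff for every $v \in U(\emptyset)$, $b(v_t) \rightarrowtail d$. Since $U$ preserves colimits and $\emptyset$ is the initial object (the colimit of the empty diagram), $U(\emptyset)$ is the initial object of $Set$, i.e. the empty set. Hence the quantification ``for every $v \in U(\emptyset)$'' is vacuously satisfied, so $\emptyset \in D^\varepsilon_d$. The parenthetical claim that $\bigwedge D^\varepsilon_d = \emptyset$ then follows immediately: $\emptyset$ is the least element of $\mathcal{C}$ under $\rightarrowtail$ (being initial, it maps into every object), so once it lies in the diagram it is automatically its infimum.

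For part (2), I need to show the colimit $\varepsilon[b](d) = \bigvee D^\varepsilon_d$ still satisfies the defining property, i.e. for every $v \in U(\varepsilon[b](d))$, $b(v_t) \rightarrowtail d$. The key tool is that $U$ preserves colimits: $U(\bigvee D^\varepsilon_d) = \bigcup_{e \in D^\varepsilon_d} U(e)$ (a colimit of the corresponding diagram of inclusions in $Set$, which is a union since all the transition maps are the monics of the partial order and $U$ preserves monics). So any $v \in U(\varepsilon[b](d))$ lies in $U(e)$ for some $e \in D^\varepsilon_d$. Moreover $v_t$ is defined via $U(e \rightarrowtail t)$, and this is compatible with the inclusion $U(e) \hookrightarrow U(\varepsilon[b](d)) \hookrightarrow U(t)$ by functoriality, so the value $v_t \in U(t)$ computed through $e$ agrees with the one computed through $\varepsilon[b](d)$. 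Since $e \in D^\varepsilon_d$, we get $b(v_t) \rightarrowtail d$ directly from the definition of $D^\varepsilon_d$ applied to $e$. As $v$ was arbitrary, $\varepsilon[b](d) \in D^\varepsilon_d$.

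The main obstacle, such as it is, is the bookkeeping in part (2): one must be careful that the element $v_t \in U(t)$ is well-defined independently of which object of the diagram one views $v$ as living in, which is exactly where functoriality of $U$ and the uniqueness of morphisms in $\mathcal{C}$ (so that the triangle $e \rightarrowtail \varepsilon[b](d) \rightarrowtail t$ equals $e \rightarrowtail t$) are used. Everything else is a routine application of the hypothesis that $U$ preserves colimits and monics.
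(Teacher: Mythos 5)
Your proof is correct and follows essentially the same route as the paper: part (1) is the vacuous quantification over $U(\emptyset)=\emptyset$ obtained from preservation of colimits (hence of the initial object), and part (2) uses preservation of colimits to see that every $v\in U(\varepsilon[b](d))$ comes from some $U(e)$ with $e\in D^\varepsilon_d$. Your extra remark that $v_t$ is independent of the object through which $v$ is viewed (by uniqueness of morphisms in $\mathcal{C}$) is a detail the paper leaves implicit, but it is the right justification.
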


\begin{proof}
As $U$ preserves colimits, it preserves initial object, and then this allows us to conclude that $\emptyset \in D^\varepsilon_d$. \\ In the same way, we know that $U$ preserves colimits. Consequently, we have for every $v \in U(\varepsilon[b](d))$ that $b(v_t) \rightarrowtail d$, whence we can conclude that $\varepsilon[b](d) \in D^\varepsilon_d$. 
\end{proof}

Hence, $\varepsilon[b](d)$ is the colimit of the cone $(\emptyset \rightarrowtail e)_{e \in D^\varepsilon_d}$,  and reciprocally $\emptyset$ is the limit of the cocone $(e \rightarrowtail \varepsilon[b](d))_{e \in D^\varepsilon_d}$. 

\begin{definition}[Dilation]
\label{dilation}
Let $b$ be a structuring element of $\mathcal{C}$. The {\bf dilation over $b$}, denoted by $\delta[b]$, is the mapping from $\mathcal{C}$ to $\mathcal{C}$ defined for every object $d \in |\mathcal{C}|$ as follows. Let $D^\delta_d$ be the diagram $\{e \in |\mathcal{C}| \mid \forall v \in U(d), b(v_t) \rightarrowtail e\}$, then  

$$\delta[b](d) = \bigwedge D^\delta_d$$
\end{definition}

\begin{example}
In the category of sets $Set$, given a set $S$, for every $A \subseteq S$, the dilation of $A$ w.r.t. $b$ is $\delta[b](A) = \bigcup_{x \in A} b(x)$. Hence, in the case where $S$ is an Abelian group with an additive law $+$, $\delta[b](A) = \bigcup_{x \in A} \{x + p \mid p \in B\}$ where $B$ is a structuring element (and $b(x) = B_x = x+B$). \\ Here again, this shows that Definition~\ref{dilation} includes the classical definition of $\delta$ by $B$ on sets (see Section~\ref{preliminaries} for a reminder of mathematical morphology on sets).
\end{example}

\begin{example}
Let us take again the example of the topos of directed graphs with the forgetful functor $U$ defined on vertices and the mapping $b$ as in Example~\ref{structuring element in toposes}. Let $G : \mathcal{B}^{op} \to Set$ be a graph. Let $[\alpha]$ be a subobject of $Sub(G)$. In this case, $\delta[b]([\alpha]) = [\beta]$ where $\beta : G' \Rightarrow G$ is the natural transformation with $G' : \mathcal{B}^{op} \to Set$ the graph defined by:

\begin{itemize}
\item $G'(V) = \{x \in G(V) \mid \forall e \in G(E), S_e = \{x,y\} \Rightarrow x \in \alpha_V(dom(\alpha)(V))~\mbox{or}~y \in \alpha_V(dom(\alpha)(V))\}$, and
\item $G'(E) = \{e \in G(E) \mid G(s)(e),G(t)(e) \in G'(V)\}$.
\end{itemize}
Hence, $\beta_V$ and $\beta_E$ are inclusion mappings.
\end{example}

\begin{proposition}
\label{prop:stability2}
We have both that $t \in D^\delta_d$ (and then $\bigvee D^\delta_d = t$) and $\delta[b](d) \in D^\delta_d$.
\end{proposition}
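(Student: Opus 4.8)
The plan is to prove this by dualizing the argument of Proposition~\ref{prop:stability1}, exploiting that $D^\delta_d$ quantifies over the \emph{fixed} set $U(d)$ rather than over $U(e)$, which makes the statement even more elementary than its erosion counterpart.

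First I would establish $t \in D^\delta_d$. By definition, $t \in D^\delta_d$ iff for every $v \in U(d)$ we have $b(v_t) \rightarrowtail t$. Since $t$ is the terminal object of $\mathcal{C}$, there is a (unique) morphism from every object of $\mathcal{C}$ into $t$; applying this to $b(v_t)$ for each $v \in U(d)$ supplies the required monics, so $t \in D^\delta_d$. For the parenthetical claim $\bigvee D^\delta_d = t$: the colimit of the diagram $D^\delta_d$ receives a monic $t \rightarrowtail \bigvee D^\delta_d$ because $t$ is one of its objects, while conversely $\bigvee D^\delta_d \rightarrowtail t$ by terminality of $t$; anti-symmetry of $\rightarrowtail$ then forces $\bigvee D^\delta_d = t$.

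Next I would show $\delta[b](d) \in D^\delta_d$, i.e. that for every $v \in U(d)$ we have $b(v_t) \rightarrowtail \delta[b](d) = \bigwedge D^\delta_d$. Fix $v \in U(d)$. By the very definition of $D^\delta_d$, for every object $e$ of the diagram $D^\delta_d$ we have $b(v_t) \rightarrowtail e$; in other words $b(v_t)$ is a lower bound of $D^\delta_d$ (a non-empty diagram, since we just showed $t$ belongs to it). Since, by the notation convention, $\bigwedge D^\delta_d$ is the limit of $D^\delta_d$, and $\mathcal{C}$ is a partial order, this limit is the greatest lower bound of the diagram, so its universal property yields the monic $b(v_t) \rightarrowtail \bigwedge D^\delta_d$. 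As $v \in U(d)$ was arbitrary, $\delta[b](d) = \bigwedge D^\delta_d$ satisfies the defining condition of $D^\delta_d$, hence $\delta[b](d) \in D^\delta_d$.

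I do not expect a genuine obstacle here: unlike Proposition~\ref{prop:stability1}, no property of the forgetful functor $U$ (preservation of colimits, reflection of monics) is needed — only terminality of $t$ and the universal property of the infimum. The single point to state carefully is that $\bigwedge D^\delta_d$ really is the greatest lower bound of \emph{all} objects appearing in $D^\delta_d$, which is immediate since in a poset category limits coincide with infima.
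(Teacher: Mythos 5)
Your proof is correct and follows essentially the same route as the paper: terminality of $t$ gives the first claim, and for the second the observation that each $b(v_t)$ is a lower bound of the diagram $D^\delta_d$ (trivially a cone, since $\mathcal{C}$ is a poset) so that the universal property of $\bigwedge D^\delta_d$ yields $b(v_t) \rightarrowtail \delta[b](d)$. Your added remark that, unlike Proposition~\ref{prop:stability1}, no preservation property of $U$ is needed here is accurate, since the quantification is over the fixed set $U(d)$.
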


\begin{proof}
The first point is a direct consequence of the terminality of $t$. \\ In the same way, by definition of $D^\delta_d$, we have for every $v \in U(d)$ that $b(v_t) \rightarrowtail \delta[b](d)$, whence we can conclude that $\delta[b](d) \in D^\delta_d$. 
\end{proof}

Hence, $\delta[b](d)$ is the limit of the cocone $(e \rightarrowtail t)_{e \in D^\delta_d}$,  and reciprocally $t$ is the colimit of the cone $(\delta[b](d) \rightarrowtail e)_{e \in D^\delta_d}$. 

\begin{proposition}
\label{prop:stability3}
The dilation of any element $d$ of $|\mathcal{C}|$ can be decomposed as follows:
$\delta[b](d) = \bigvee \{b(v_t) \mid v \in U(d)\}$.
\end{proposition}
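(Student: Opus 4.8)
The plan is to prove the two inclusions $\delta[b](d) \rightarrowtail \bigvee\{b(v_t) \mid v \in U(d)\}$ and $\bigvee\{b(v_t) \mid v \in U(d)\} \rightarrowtail \delta[b](d)$ separately, and then conclude by anti-symmetry of the partial order $\rightarrowtail$ (established via the argument in Proposition~\ref{is a lattice}). First I would observe that by Proposition~\ref{prop:stability2} we already know $\delta[b](d) \in D^\delta_d$, which by Definition~\ref{dilation} means that for every $v \in U(d)$ we have $b(v_t) \rightarrowtail \delta[b](d)$. Hence $\delta[b](d)$ is an upper bound (a cocone apex) for the diagram $\{b(v_t) \mid v \in U(d)\}$, and since $\bigvee$ is the colimit (least upper bound), we immediately get $\bigvee\{b(v_t) \mid v \in U(d)\} \rightarrowtail \delta[b](d)$. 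That direction is essentially free.

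For the reverse inclusion, I would show that $\bigvee\{b(v_t) \mid v \in U(d)\}$ itself belongs to $D^\delta_d$; since $\delta[b](d) = \bigwedge D^\delta_d$ is the greatest lower bound of $D^\delta_d$, membership of this supremum in $D^\delta_d$ forces $\delta[b](d) \rightarrowtail \bigvee\{b(v_t) \mid v \in U(d)\}$. To verify membership, fix $v \in U(d)$; then $b(v_t)$ is one of the objects of the diagram $\{b(w_t) \mid w \in U(d)\}$, so the canonical colimit injection gives $b(v_t) \rightarrowtail \bigvee\{b(w_t) \mid w \in U(d)\}$. Since this holds for every $v \in U(d)$, the object $e := \bigvee\{b(w_t) \mid w \in U(d)\}$ satisfies the defining condition $\forall v \in U(d),\ b(v_t) \rightarrowtail e$ of $D^\delta_d$, hence $e \in D^\delta_d$, as required.

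The only mild subtlety — and the step I would be most careful about — is the use of the colimit injections $b(v_t) \rightarrowtail \bigvee\{b(w_t) \mid w \in U(d)\}$: in a morpho-category the ``diagram'' associated with a set of objects carries all the monics between them as morphisms, and the colimit $\bigvee$ is the colimit of that diagram, so one should note that the coprojection out of each object $b(v_t)$ into the colimit is indeed a morphism of $\mathcal{C}$, i.e. one of the unique monics $b(v_t) \rightarrowtail \bigvee\{b(w_t) \mid w \in U(d)\}$. This is exactly the internal-complete-lattice reading of $\bigvee$ from the Notation following Definition~\ref{morpho-category}, so no extra hypothesis is needed. Everything else is a direct unfolding of Definitions~\ref{dilation} and the least-upper-bound / greatest-lower-bound universal properties, together with anti-symmetry of $\rightarrowtail$.
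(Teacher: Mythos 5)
Your proof is correct and follows essentially the same route as the paper's: one direction by showing $\bigvee\{b(v_t) \mid v \in U(d)\}$ belongs to $D^\delta_d$ (which the paper dismisses as ``obvious'' and you usefully spell out via the colimit coprojections), the other by combining Proposition~\ref{prop:stability2} with the universal property of the colimit. No gaps.
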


\begin{proof}
Obviously, we have that $\bigvee \{b(v_t) \mid v \in U(d)\} \in D^\delta_d$, and then by the properties of limits, $\delta[b](d) \rightarrowtail \bigvee \{b(v_t) \mid v \in U(d)\}$. \\ Now, by Proposition~\ref{prop:stability2}, we have that $\delta[b](d) \in D^\delta_d$, and then for every $v \in U(d)$, $b(v_t) \rightarrowtail \delta[b](d)$. Hence, by the properties of colimits, we can conclude that $\bigvee \{b(v_t) \mid v \in U(d)\} \rightarrowtail \delta[b](d)$.
\end{proof}

Proposition~\ref{prop:stability3} then defines an algorithm to compute dilation by iterating on the elements of $U(d)$.\footnote{Of course this requires that the set $U(d)$ is of finite cardinality, and the supremum of two elements is computable (what is the case for all the examples presented in the paper).} 

We will give in the next section the definition of an algorithm to compute erosions. However, to be able to compute erosions, some conditions on morpho-categories will be required.

\subsection{Results}
\label{sect:results}

\begin{theorem}[Adjunction]
\label{adjonction}
For all $d, e \in |\mathcal{C}|$, the following equivalence holds:
$$d \rightarrowtail \varepsilon[b](e) \Longleftrightarrow \delta[b](d) \rightarrowtail e$$
\end{theorem}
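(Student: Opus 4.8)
The plan is to prove the two implications separately, using the characterizations of $\varepsilon[b](e)$ and $\delta[b](d)$ as suprema/infima of the diagrams $D^\varepsilon_e$ and $D^\delta_d$, together with Propositions~\ref{prop:stability1}, \ref{prop:stability2} and especially the decomposition $\delta[b](d) = \bigvee\{b(v_t) \mid v \in U(d)\}$ from Proposition~\ref{prop:stability3}. The central fact to exploit is that, since $U$ preserves colimits, $U(d) = U(\bigvee \text{(atoms/elements realizing }d)) $ behaves well under the monics in $\mathcal{C}$; in particular if $d \rightarrowtail d'$ then $U(d) \subseteq U(d')$ (as $U$ preserves monics), and conversely membership of elements in $U(d)$ controls which $b(v_t)$ must be dominated.

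First I would prove the direction $d \rightarrowtail \varepsilon[b](e) \Rightarrow \delta[b](d) \rightarrowtail e$. Assume $d \rightarrowtail \varepsilon[b](e)$. Since $U$ preserves monics, $U(d) \subseteq U(\varepsilon[b](e))$. By Proposition~\ref{prop:stability1}, $\varepsilon[b](e) \in D^\varepsilon_e$, which means precisely that for every $v \in U(\varepsilon[b](e))$ we have $b(v_t) \rightarrowtail e$; moreover one checks that $v_t$ is computed the same way whether $v$ is viewed as an element of $U(d)$ or of $U(\varepsilon[b](e))$, because $d \rightarrowtail \varepsilon[b](e) \rightarrowtail t$ and composition of the canonical maps to $t$ is the canonical map. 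Hence for every $v \in U(d)$, $b(v_t) \rightarrowtail e$, so $e$ is an upper bound of $\{b(v_t) \mid v \in U(d)\}$, and by Proposition~\ref{prop:stability3} together with the universal property of the supremum, $\delta[b](d) = \bigvee\{b(v_t) \mid v \in U(d)\} \rightarrowtail e$.

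For the converse, assume $\delta[b](d) \rightarrowtail e$. I want $d \rightarrowtail \varepsilon[b](e)$, and since $\varepsilon[b](d) = \bigvee D^\varepsilon_e$ it suffices to show $d \in D^\varepsilon_e$, i.e. that for every $v \in U(d)$ one has $b(v_t) \rightarrowtail e$. But by Proposition~\ref{prop:stability3}, $b(v_t) \rightarrowtail \bigvee\{b(w_t) \mid w \in U(d)\} = \delta[b](d)$ for each such $v$, and composing with the assumed monic $\delta[b](d) \rightarrowtail e$ gives $b(v_t) \rightarrowtail e$. Thus $d \in D^\varepsilon_e$, hence $d \rightarrowtail \bigvee D^\varepsilon_e = \varepsilon[b](e)$, using that every member of a diagram maps into its colimit.

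The main obstacle, and the point that needs care rather than cleverness, is the bookkeeping around the notation $v_t$: one must be sure that when $v \in U(d)$ and $d \rightarrowtail e$ (or $d \rightarrowtail \varepsilon[b](e)$), the element $v_t \in U(t)$ does not depend on the ambient object, which follows from the uniqueness of morphisms into $t$ and functoriality of $U$, i.e. $U(d \rightarrowtail t) = U(e \rightarrowtail t) \circ U(d \rightarrowtail e)$. Everything else is a direct application of the universal properties of $\bigvee$ (colimit) and $\bigwedge$ (limit) in the morpho-category, the fact that $U$ preserves monics and colimits, and the two stability lemmas; no distributivity, atomicity, or Booleanness is needed, so the theorem holds for arbitrary morpho-categories as stated.
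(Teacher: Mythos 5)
Your proof is correct and follows essentially the same route as the paper's: both directions reduce to showing that for every $v \in U(d)$ one has $b(v_t) \rightarrowtail e$, using that $U$ preserves monics together with $\varepsilon[b](e) \in D^\varepsilon_e$ (resp.\ $\delta[b](d) \in D^\delta_d$), and then invoking the universal properties of $\bigwedge D^\delta_d$ and $\bigvee D^\varepsilon_e$. The only cosmetic difference is that you route the final step through the decomposition of Proposition~\ref{prop:stability3} rather than directly through membership of $e$ in $D^\delta_d$, and you make explicit the well-definedness of $v_t$, which the paper leaves implicit.
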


\begin{proof}
($\Longrightarrow$) Let us suppose that $d \rightarrowtail \varepsilon[b](e)$. By Proposition~\ref{prop:stability1}, we know that $\varepsilon[b](e) \in D^\varepsilon_e$. Moreover, $U$ preserves monics. We then have that $U(d \rightarrowtail \varepsilon[b](e))$ is an injective mapping. Hence, for every $v \in U(d)$, we have that $b(v_t) \rightarrowtail e$, and then we can deduce that $e \in D^\delta_d$. By the properties of limits, we then have that   $\delta[b](d) \rightarrowtail e$.

\medskip
($\Longleftarrow$) Let us suppose that $\delta[b](d) \rightarrowtail e$. As $U$ preserves monics, we have that $U(\delta[b](d) \rightarrowtail e)$ is an injective mapping. By Proposition~\ref{prop:stability1}, we further know that $\delta[b](d) \in D^\delta_d$. Hence, we have for every $v \in U(d)$ that $b(v_t) \rightarrowtail e$, whence we can conclude that $d \in D^\varepsilon_e$, and then by the properties of colimits we have that $d \rightarrowtail \varepsilon[b](e)$. 
\end{proof}

As the adjunction property holds between $\varepsilon[b]$ and $\delta[b]$, the following standard results in mathematical morphology also hold.

\begin{theorem}
\label{th:main results}
$\varepsilon[b]$ and $\delta[b]$ are:
\begin{itemize}
\item {\em monotonic:} for every $d,d' \in |\mathcal{C}|$, if $d \rightarrowtail d'$ then $\varepsilon[b](d) \rightarrowtail \varepsilon[b](d')$ and $\delta[b](d) \rightarrowtail \delta[b](d')$;
\item {\em commutative} with respect to $\bigwedge$ and $\bigvee$, respectively: for every diagram $\mathcal{D}$ in $\mathcal{C}$, $\varepsilon[b](\bigwedge \mathcal{D}) = \bigwedge \{\varepsilon[b](d) \mid d \in \mathcal{D}\}$ and $\delta[b](\bigvee  \mathcal{D}) = \bigvee \{\delta[b](d) \mid d \in \mathcal{D}\}$;
\item and satisfy the {\em preservation} properties: $\varepsilon[b](t) = t$ and $\delta[b](\emptyset) = \emptyset$;
\end{itemize}
\end{theorem}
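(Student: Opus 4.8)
The plan is to derive all three properties as formal consequences of the adjunction established in Theorem~\ref{adjonction}, treating $\varepsilon[b]$ and $\delta[b]$ as an adjoint pair between the internal complete lattice $\mathcal{C}$ and itself, exactly as one does for a Galois connection. First I would prove \emph{monotonicity}. Suppose $d \rightarrowtail d'$. For $\delta[b]$, by Proposition~\ref{prop:stability2} we have $\delta[b](d') \in D^\delta_{d'}$, so for every $v \in U(d')$, $b(v_t) \rightarrowtail \delta[b](d')$; since $U$ preserves monics, $U(d) \subseteq U(d')$, hence this holds for every $v \in U(d)$, so $\delta[b](d') \in D^\delta_d$ and thus $\delta[b](d) = \bigwedge D^\delta_d \rightarrowtail \delta[b](d')$. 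Alternatively, and more uniformly, monotonicity of $\delta[b]$ follows from the adjunction: $\delta[b](d') \rightarrowtail \delta[b](d') $ gives $d' \rightarrowtail \varepsilon[b](\delta[b](d'))$, hence $d \rightarrowtail \varepsilon[b](\delta[b](d'))$ by transitivity of $\rightarrowtail$, hence $\delta[b](d) \rightarrowtail \delta[b](d')$ again by the adjunction. The argument for $\varepsilon[b]$ is the order-dual: $\varepsilon[b](d) \rightarrowtail \varepsilon[b](d)$ yields $\delta[b](\varepsilon[b](d)) \rightarrowtail d \rightarrowtail d'$, so $\varepsilon[b](d) \rightarrowtail \varepsilon[b](d')$.

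Next I would prove \emph{commutativity} with colimits for $\delta[b]$ and with limits for $\varepsilon[b]$. The clean route is again via the adjunction together with the universal properties of $\bigvee$ and $\bigwedge$. For $\delta[b]$ and a diagram $\mathcal{D}$: since $d \rightarrowtail \bigvee\mathcal{D}$ for each $d \in \mathcal{D}$, monotonicity gives $\delta[b](d) \rightarrowtail \delta[b](\bigvee\mathcal{D})$, so $\bigvee\{\delta[b](d) \mid d \in \mathcal{D}\} \rightarrowtail \delta[b](\bigvee\mathcal{D})$. For the reverse inclusion, let $f$ be any upper bound of $\{\delta[b](d) \mid d\in\mathcal{D}\}$; then for each $d$, $\delta[b](d) \rightarrowtail f$, so $d \rightarrowtail \varepsilon[b](f)$ by the adjunction, hence $\bigvee\mathcal{D} \rightarrowtail \varepsilon[b](f)$ by the universal property of the colimit, hence $\delta[b](\bigvee\mathcal{D}) \rightarrowtail f$ by the adjunction again; taking $f = \bigvee\{\delta[b](d)\mid d\in\mathcal{D}\}$ gives the inclusion $\delta[b](\bigvee\mathcal{D}) \rightarrowtail \bigvee\{\delta[b](d)\mid d\in\mathcal{D}\}$, and anti-symmetry of $\rightarrowtail$ closes the equality. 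The statement for $\varepsilon[b]$ with $\bigwedge$ is the exact order-dual, swapping the roles of the two adjoints.

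Finally, the \emph{preservation} properties. Here the quickest argument is that a right adjoint preserves the terminal object and a left adjoint preserves the initial object — but since these are just the limit and colimit of the empty diagram, they are in fact special cases of the commutativity bullet: taking $\mathcal{D} = \emptyset$, $\bigwedge \emptyset = t$ and $\bigvee \emptyset = \emptyset$, so $\varepsilon[b](t) = \bigwedge\{\varepsilon[b](d)\mid d\in\emptyset\} = \bigwedge\emptyset = t$ and $\delta[b](\emptyset) = \bigvee\emptyset = \emptyset$. One could also see these directly: $\emptyset \in D^\delta_\emptyset$ trivially since $U(\emptyset) = \emptyset$ (as $U$ preserves the initial object, by Proposition~\ref{prop:stability1}), so $\delta[b](\emptyset) = \bigwedge D^\delta_\emptyset \rightarrowtail \emptyset$ and hence equals $\emptyset$; dually $t \in D^\varepsilon_t$ since every $b(v_t) \rightarrowtail t$ by terminality, so $\varepsilon[b](t) = \bigvee D^\varepsilon_t = t$.

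I do not expect any genuine obstacle here: once Theorem~\ref{adjonction} is in place, everything is the standard abstract-nonsense consequence of having a Galois connection between complete lattices, and the only points requiring a little care are (i) invoking that $U$ preserves monics so that $U(d) \subseteq U(d')$ whenever $d \rightarrowtail d'$, which is exactly what makes the "direct" arguments via $D^\delta$ and $D^\varepsilon$ go through, and (ii) using anti-symmetry of the partial order $\rightarrowtail$ (equivalently the anti-symmetry established in Proposition~\ref{is a lattice}) to upgrade the two mutual inclusions into the claimed equalities. The mildest subtlety is simply keeping the order-duality bookkeeping straight — which adjoint preserves which kind of (co)limit — but this is bookkeeping rather than a real difficulty.
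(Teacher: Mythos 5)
Your proof is correct, and it differs from the paper's in one substantive place: the reverse direction of the commutativity property. The paper proves $\bigwedge \{\varepsilon[b](d) \mid d \in \mathcal{D}\} \rightarrowtail \varepsilon[b](\bigwedge \mathcal{D})$ by going back to the concrete definition of erosion — it checks, using the structuring element and the functor $U$, that $\bigwedge \{\varepsilon[b](d) \mid d \in \mathcal{D}\}$ is itself a member of the diagram $D^\varepsilon_{\bigwedge \mathcal{D}}$ and is therefore below its supremum. You instead run the purely formal ``right adjoints preserve limits'' argument at the poset level (any lower bound $g$ of $\{\varepsilon[b](d)\}$ satisfies $\delta[b](g) \rightarrowtail \bigwedge \mathcal{D}$ by adjunction, hence $g \rightarrowtail \varepsilon[b](\bigwedge \mathcal{D})$), which never touches $D^\varepsilon$ or $b$ at all; this is cleaner and makes transparent that the whole theorem is abstract nonsense once Theorem~\ref{adjonction} is in hand, whereas the paper's version re-exercises the concrete machinery. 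Your monotonicity argument via the unit ($d' \rightarrowtail \varepsilon[b](\delta[b](d'))$) is essentially equivalent to the paper's representation $\varepsilon[b](d) = \bigvee \{e \mid \delta[b](e) \rightarrowtail d\}$, and your observation that the preservation properties are the empty-diagram instance of commutativity is a nice economy the paper does not make (its own preservation paragraph argues directly, and somewhat loosely, with $t$ in place of a general $c$). The only point to polish in your ``direct'' monotonicity argument is the phrase $U(d) \subseteq U(d')$: strictly, $U(d \rightarrowtail d')$ is only an injection, and what you actually need is that each $v \in U(d)$ and its image $w \in U(d')$ satisfy $v_t = w_t$ (by uniqueness of the morphism to $t$), so that $b(v_t) = b(w_t) \rightarrowtail \delta[b](d')$; this is the same level of informality the paper itself uses in the proof of Theorem~\ref{adjonction}, so it is not a gap, merely worth stating precisely.
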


\begin{proof}
We prove these properties for $\varepsilon[b]$. The proof for $\delta[b]$ is substantially similar. Note that while the sketch of the proof is similar to the classical proof in the general algebraic setting of mathematical morphology, the detailed proof is here adapted to the particular setting proposed in this paper and to its specificities.
\begin{itemize}
    \item {\em Monotonicity.} Obviously, we have that $\varepsilon[b](d) = \bigvee \{e \mid e \rightarrowtail \varepsilon[b](d)\}$. By adjunction, we then have that $\varepsilon[b](d) = \bigvee \{e \mid \delta[b](e) \rightarrowtail d\}$. If $d \rightarrowtail d'$, we then have that $\{e \mid \delta[b](e) \rightarrowtail d\} \subseteq \{e \mid \delta[b](e) \rightarrowtail d'\}$, and then $\bigvee \{e \mid \delta[b](e) \rightarrowtail d\} \rightarrowtail \bigvee \{e \mid \delta[b](e) \rightarrowtail d'\}$, whence we can conclude that $\varepsilon[b](d) \rightarrowtail \varepsilon[b](d')$. 
    \item {\em Commutativity.} Let $\mathcal{D}$ be a diagram of objects in $\mathcal{C}$. By the properties of limits, $\bigwedge \mathcal{D} \rightarrowtail d$ for every $d \in \mathcal{D}$. Therefore, by monotonicity, we have that $\varepsilon[b](\bigwedge \mathcal{D}) \rightarrowtail \varepsilon[b](d)$, and then by the property of limits, we can conclude that $\varepsilon[b](\bigwedge \mathcal{D}) \rightarrowtail \bigwedge \{\varepsilon[b](d) \mid d \in \mathcal{D}\}$. \\ Let us prove the opposite direction. First, let us observe that for every $v \in U(\bigwedge \{\varepsilon[b](d) \mid d \in \mathcal{D}\})$, we have that $b(v_t) \rightarrowtail d$ for every $d \in \mathcal{D}$. By the property of limits, we then have for every $v \in  U(\bigwedge \{\varepsilon[b](d) \mid d \in \mathcal{D}\})$, that $b(v_t) \rightarrowtail \bigwedge \mathcal{D}$. Hence, $\bigwedge \{\varepsilon[b](d) \mid d \in \mathcal{D}\}$ belongs to the diagram $D^\varepsilon_{\bigwedge \mathcal{D}}$ whence we conclude that $\bigwedge \{\varepsilon[b](d) \mid d \in \mathcal{D}\} \rightarrowtail \varepsilon[b](\bigwedge \mathcal{D})$.
    \item {\em Preservation.} Obviously, we have that $\varepsilon[b](c) \rightarrowtail c$. In the same way, we obviously have that $c \in D^\varepsilon_c$, and then by the properties of colimits, we can conclude that $c \rightarrowtail \varepsilon[b](c)$.
\end{itemize}
\end{proof}

By the monotonicity property, both $\varepsilon[b]$ and $\delta[b]$ are functors from $\mathcal{C}$ to itself, and then by Theorem~\ref{adjonction}, $\delta[b]$ is left-adjoint to $\varepsilon[b]$ in the category $\mathcal{C}$. 

\medskip
As usual, the composition of erosion and dilation is not equal to the identity, but produces two other operators, called opening (defined as $\delta[b] \circ \varepsilon[b]$) and closing (defined as $\varepsilon[b] \circ \delta[b]$). Opening and closing have the following properties.

\begin{theorem}
$\varepsilon[b] \circ \delta[b]$ (closing) and $\delta[b] \circ \varepsilon[b]$ (opening) satisfy the following properties:
\begin{itemize}
\item $\varepsilon[b] \circ \delta[b]$ is extensive;
\item $\delta[b] \circ \varepsilon[b]$ is anti-extensive;
\item $\varepsilon[b] \circ \delta[b] \circ \varepsilon[b] = \varepsilon[b]$;
\item $\delta[b] \circ \varepsilon[b] \circ \delta[b] = \delta[b]$;
\item $\varepsilon[b] \circ \delta[b]$ and $\delta[b] \circ \varepsilon[b]$ are idempotent. 
\end{itemize}
\end{theorem}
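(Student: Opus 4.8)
The plan is to derive all five statements purely from the adjunction (Theorem~\ref{adjonction}) together with the monotonicity already established in Theorem~\ref{th:main results}, using only the abstract fact that $\delta[b]$ is left adjoint to $\varepsilon[b]$ on the poset $\mathcal{C}$. This is the standard Galois-connection argument, so no structuring-element computations are needed.

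First I would prove the two ``triangular'' extensivity facts. Applying the adjunction $d \rightarrowtail \varepsilon[b](e) \Longleftrightarrow \delta[b](d) \rightarrowtail e$ with $e := \delta[b](d)$ and using reflexivity $\delta[b](d) \rightarrowtail \delta[b](d)$ gives $d \rightarrowtail \varepsilon[b](\delta[b](d))$, i.e.\ $\varepsilon[b]\circ\delta[b]$ is extensive. Dually, taking $d := \varepsilon[b](e)$ and using $\varepsilon[b](e) \rightarrowtail \varepsilon[b](e)$ gives $\delta[b](\varepsilon[b](e)) \rightarrowtail e$, i.e.\ $\delta[b]\circ\varepsilon[b]$ is anti-extensive.

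Next I would prove the two ``triangle identities'' $\varepsilon[b]\circ\delta[b]\circ\varepsilon[b] = \varepsilon[b]$ and $\delta[b]\circ\varepsilon[b]\circ\delta[b] = \delta[b]$. For the first: applying monotonicity of $\varepsilon[b]$ to the anti-extensivity inequality $\delta[b](\varepsilon[b](d)) \rightarrowtail d$ yields $\varepsilon[b](\delta[b](\varepsilon[b](d))) \rightarrowtail \varepsilon[b](d)$; conversely, extensivity of $\varepsilon[b]\circ\delta[b]$ applied to the object $\varepsilon[b](d)$ gives $\varepsilon[b](d) \rightarrowtail \varepsilon[b](\delta[b](\varepsilon[b](d)))$. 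By antisymmetry of $\rightarrowtail$ these two monics force equality. The identity for $\delta[b]$ is obtained symmetrically (monotonicity of $\delta[b]$ on the extensivity inequality $d \rightarrowtail \varepsilon[b](\delta[b](d))$, combined with anti-extensivity of $\delta[b]\circ\varepsilon[b]$ applied to $\delta[b](d)$).

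Finally, idempotence of the opening and closing is an immediate consequence of the triangle identities. For the closing $\gamma := \varepsilon[b]\circ\delta[b]$ we compute $\gamma\circ\gamma = \varepsilon[b]\circ(\delta[b]\circ\varepsilon[b]\circ\delta[b]) = \varepsilon[b]\circ\delta[b] = \gamma$, and likewise for the opening $\alpha := \delta[b]\circ\varepsilon[b]$ we have $\alpha\circ\alpha = \delta[b]\circ(\varepsilon[b]\circ\delta[b]\circ\varepsilon[b]) = \delta[b]\circ\varepsilon[b] = \alpha$. I expect no real obstacle here: the only subtlety is being careful that ``extensive'' and ``anti-extensive'' are expressed as the monics $d \rightarrowtail (\varepsilon[b]\circ\delta[b])(d)$ and $(\delta[b]\circ\varepsilon[b])(d) \rightarrowtail d$ respectively, and then invoking antisymmetry of the partial order $\rightarrowtail$ (guaranteed in a morpho-category) to upgrade the pair of inequalities to the stated equalities.
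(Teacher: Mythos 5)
Your proposal is correct and follows essentially the same route as the paper: both derive extensivity and anti-extensivity directly from the adjunction applied to the reflexive monics $\delta[b](d)\rightarrowtail\delta[b](d)$ and $\varepsilon[b](d)\rightarrowtail\varepsilon[b](d)$, then obtain the triangle identities by combining monotonicity with these two facts (plus antisymmetry of $\rightarrowtail$), and finally read off idempotence. The only cosmetic difference is that where you invoke ``extensivity of the closing applied to $\varepsilon[b](d)$'' for the reverse inequality, the paper re-runs the adjunction step explicitly, which amounts to the same computation.
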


\begin{proof}
Here, the proof is the same as the classical one, and it is detailed here for the sake of completeness, using the notations of the categorial setting.
\begin{itemize}
\item {\em Extensivity of closing.} Obviously, we have that $\delta[b](d) \rightarrowtail \delta[b](d)$, and then by the adjunction property, we can conclude that $d \rightarrowtail \varepsilon[b](\delta[b](d))$.
\item {\em Anti-extensivity of opening.} Obviously, we have that $\varepsilon[b](d) \rightarrowtail \varepsilon[b](d)$, and then by the adjunction property, we can conclude that $\delta[b](\varepsilon[b](d)) \rightarrowtail d$.
\item {\em Preservation.} By anti-extensivity of opening we have that $\delta[b](\varepsilon[b](d)) \rightarrowtail d$, and then by monotonicity of erosion, we can conclude that

$$\varepsilon[b](\delta[b](\varepsilon[b](d))) \rightarrowtail \varepsilon[d](d)$$
To show the opposite direction, we obviously have that $\delta[b](\varepsilon[b](d)) \rightarrowtail \delta[b](\varepsilon[b](d))$, and then by the property of adjunction, we can conclude that $\varepsilon[b](d) \rightarrowtail \varepsilon[b](\delta[b](\varepsilon[b](d)))$. \\ The equation $\delta[b] \circ \varepsilon[b] \circ \delta[b] = \delta[b]$ can be proved similarly.
\item {\em Idempotence.} Direct consequence of the preservation properties.
\end{itemize}
\end{proof}

A property which can be sought by the operations of erosion and dilation is to be, respectively, anti-extensive (i.e. $\varepsilon[b](d) \rightarrowtail d$) and extensive (i.e. $d \rightarrowtail \delta[b](d)$). With no further condition on the structure of objects $c \in |\mathcal{C}|$, we cannot hope to have such results. Indeed, no structural link exists between the objects $e \in D^\varepsilon_d$ (resp. $d$) and the objects $b(v_t)$ for each $v \in U(e)$ (resp. $v \in U(d)$). It can be natural for an element $v \in U(c)$ to see the object $b(v)$ as a neighborhood of $v$, and which in our case can mean an object structured around the element $v$. This is similar to a cover notion.

\begin{definition}[Covered by]
Let $\mathcal{C}$ be a morpho-category. Let $b$ be a structuring element. Let $c \in |\mathcal{C}|$ be an object. We say that $c$ is {\bf covered by} $b$ if $c \rightarrowtail \bigvee \{b(v_t) \mid v \in U(c)\}$.  \\
We say that $\mathcal{C}$ is {\bf covered by} b if every object $c \in |\mathcal{C}|$ is covered by $b$. 
\end{definition}

\begin{theorem}
\label{extensivity}
Under the condition that $\mathcal{C}$ is covered by $b$, then $\varepsilon[b]$ and $\delta[b]$ are, respectively:

\begin{enumerate}
\item {\em anti-extensive:} for every $d \in |\mathcal{C}|$, $\varepsilon[b](d) \rightarrowtail d$;
\item {\em extensive:} for every $d \in \mathcal{C}$, $d \rightarrowtail \delta[b](d)$.
\end{enumerate}
\end{theorem}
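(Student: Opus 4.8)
The plan is to prove both items directly from the definitions of $\varepsilon[b]$ and $\delta[b]$ together with the ``covered by'' hypothesis, using the structural propositions already established (in particular Proposition~\ref{prop:stability1}, Proposition~\ref{prop:stability2}, and Proposition~\ref{prop:stability3}). Since the proofs of the two items are mirror images of each other under the adjunction/duality between erosion and dilation, I would prove one in detail and indicate that the other is dual.

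For the extensivity of $\delta[b]$, the argument is almost immediate: by Proposition~\ref{prop:stability3} we have $\delta[b](d) = \bigvee \{b(v_t) \mid v \in U(d)\}$, and the hypothesis that $\mathcal{C}$ is covered by $b$ says precisely that $d \rightarrowtail \bigvee \{b(v_t) \mid v \in U(d)\}$; composing these gives $d \rightarrowtail \delta[b](d)$. So the only real content is in the anti-extensivity of $\varepsilon[b]$. For that, I would take an arbitrary $d \in |\mathcal{C}|$ and show $\varepsilon[b](d) \in D^\varepsilon_d$ has the property that we can reach $d$; more precisely, I would first note that by Proposition~\ref{prop:stability1} the object $\varepsilon[b](d)$ itself lies in $D^\varepsilon_d$, so for every $v \in U(\varepsilon[b](d))$ we have $b(v_t) \rightarrowtail d$. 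Since $\varepsilon[b](d)$ is covered by $b$ (as $\mathcal{C}$ is covered by $b$), we have $\varepsilon[b](d) \rightarrowtail \bigvee \{b(v_t) \mid v \in U(\varepsilon[b](d))\}$. Now each $b(v_t)$ in this diagram satisfies $b(v_t) \rightarrowtail d$, so $d$ is an upper bound (a cocone vertex) for the diagram $\{b(v_t) \mid v \in U(\varepsilon[b](d))\}$, and hence by the universal property of the colimit (supremum) we get $\bigvee \{b(v_t) \mid v \in U(\varepsilon[b](d))\} \rightarrowtail d$. Chaining the two monics yields $\varepsilon[b](d) \rightarrowtail d$.

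Alternatively, and perhaps more cleanly, I would derive item~1 from item~2 via the adjunction (Theorem~\ref{adjonction}): once extensivity of $\delta[b]$ is known, apply it to $\varepsilon[b](d)$ to get $\varepsilon[b](d) \rightarrowtail \delta[b](\varepsilon[b](d))$; but anti-extensivity of opening (proved in the previous theorem) gives $\delta[b](\varepsilon[b](d)) \rightarrowtail d$; composing gives $\varepsilon[b](d) \rightarrowtail d$. This is shorter, though it relies on the opening/closing theorem rather than being self-contained, so I would likely present the direct argument for at least one of the two and remark on the adjunction route for the other.

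The main obstacle, such as it is, is bookkeeping rather than mathematics: one must be careful that ``covered by'' is a hypothesis about \emph{every} object of $\mathcal{C}$, so that it may legitimately be applied to $\varepsilon[b](d)$ and not merely to $d$ — this is exactly why the definition of ``$\mathcal{C}$ is covered by $b$'' (as opposed to a single object being covered) is the right hypothesis here. One must also take care that the diagrams in question are the ones whose objects are the $b(v_t)$ and whose morphisms are the unique monics between them, so that $d$ being above every object of the diagram really does make it a cocone and the colimit property applies; this is guaranteed by the standing convention on diagrams stated before Definition~\ref{erosion} of the structuring element. No genuinely hard step is expected.
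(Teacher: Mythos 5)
Your proof is correct and follows essentially the same route as the paper's: in both cases the whole content is that the cover hypothesis supplies the monic from an object into $\bigvee \{b(v_t) \mid v \in U(\cdot)\}$, which the defining property of $D^\varepsilon_d$ (resp.\ $D^\delta_d$) then carries into $d$ (resp.\ out of $d$). Your two small shortcuts --- invoking Proposition~\ref{prop:stability3} directly for extensivity, and applying the cover argument only to $\varepsilon[b](d)$ itself (legitimate since it lies in $D^\varepsilon_d$ by Proposition~\ref{prop:stability1}) rather than to every $e \in D^\varepsilon_d$ as the paper does --- are sound and merely streamline the same argument.
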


\begin{proof}
\begin{itemize}
\item {\em Anti-extensivity.} By the hypothesis of cover for $\mathcal{C}$, we have that $e \rightarrowtail \bigvee \{b(v_t) \mid v \in U(e)\}$, and then $e \rightarrowtail d$ for every $e \in D^\varepsilon_d$. Therefore, by the property of colimits, we conclude that $\varepsilon[b](d) \rightarrowtail d$.
\item {\em Extensivity.} By the hypothesis of cover for $\mathcal{C}$ we have that $d \rightarrowtail \bigvee \{b(v_t) \mid v \in U(d)\}$, and then $d \rightarrowtail e$ for every $e \in D^\delta_d$. Therefore, by the property of limits, we conclude that $d \rightarrowtail \delta[b](d)$.
\end{itemize}
\end{proof}



Another property which can be sought is that erosion and dilation are dual operations. Of course, this requires first to be able to define the notion of complement what is made possible if  morpho-categories are Cartesian closed. Indeed, we have seen in Section~\ref{sec:morpho-categories} that given an object $d \in |\mathcal{C}|$, its complement $\overline{d}$ is the object $\emptyset^d$ (the exponential of the object $d$ and the initial object $\emptyset$ of $\mathcal{C}$).

\begin{theorem}
Let $\mathcal{C}$ be a morpho-category which is further Boolean and covered by $b$. Then, $\varepsilon[b](\overline{d}) = \overline{\delta[b](d)}$. 
\end{theorem}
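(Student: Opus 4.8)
The plan is to prove the two inclusions $\varepsilon[b](\overline{d}) \rightarrowtail \overline{\delta[b](d)}$ and $\overline{\delta[b](d)} \rightarrowtail \varepsilon[b](\overline{d})$ separately, using in each direction the characterization of the complement in a Boolean morpho-category, namely that $\overline{c}$ is the unique object with $c \wedge \overline{c} = \emptyset$ and $c \vee \overline{c} = t$ (and the derived fact that $a \wedge c = \emptyset$ implies $a \rightarrowtail \overline{c}$, from the earlier proposition on Boolean morpho-categories). The key link between erosion/dilation and complements will be the adjunction (Theorem~\ref{adjonction}) together with monotonicity, commutativity with $\bigwedge$ and $\bigvee$ (Theorem~\ref{th:main results}), and the fact that $\mathcal{C}$ being covered by $b$ gives anti-extensivity of $\varepsilon[b]$ and extensivity of $\delta[b]$ (Theorem~\ref{extensivity}).

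First I would establish $\delta[b](d) \wedge \varepsilon[b](\overline{d}) = \emptyset$. Since $\varepsilon[b]$ commutes with $\bigwedge$, we have $\varepsilon[b](\overline{d}) = \varepsilon[b](\overline{d}) \wedge \varepsilon[b](t) \ldots$ — more directly, I would argue on elements: for any $v \in U(\varepsilon[b](\overline d))$ we have $b(v_t) \rightarrowtail \overline d$ by stability (Proposition~\ref{prop:stability1}), and since $\varepsilon[b]$ is anti-extensive, $v \in U(d$'s complement side$)$; combined with $b(v_t) \rightarrowtail \delta[b](\varepsilon[b](\overline d))$ being disjoint from $d$, one shows $\varepsilon[b](\overline{d}) \wedge d = \emptyset$, hence $\varepsilon[b](\overline d) \rightarrowtail \overline d$ is automatic but more importantly, applying $\delta[b]$ and using $\delta[b](\emptyset) = \emptyset$ with anti-extensivity of opening gives $\delta[b](\varepsilon[b](\overline d)) \wedge \delta[b](d) \rightarrowtail$ something disjoint. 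The cleaner route: from $\varepsilon[b](\overline d) \rightarrowtail \overline d$ we cannot immediately conclude, so instead use adjunction — $\varepsilon[b](\overline d) \rightarrowtail \varepsilon[b](\overline d)$ gives $\delta[b](\varepsilon[b](\overline d)) \rightarrowtail \overline d$, so $\delta[b](\varepsilon[b](\overline d)) \wedge d = \emptyset$; but I want $\delta[b](d) \wedge \varepsilon[b](\overline d) = \emptyset$. For that, note $\varepsilon[b](\overline d) \rightarrowtail \varepsilon[b](\overline d)$ plus adjunction is the wrong adjoint order; instead I would use that $\delta[b](d) \wedge \varepsilon[b](\overline d) =: c$ satisfies $c \rightarrowtail \varepsilon[b](\overline d)$, so $\delta[b](c) \rightarrowtail \overline d$ by adjunction, while $c \rightarrowtail \delta[b](d)$ and monotonicity give $\delta[b](c) \rightarrowtail \delta[b]\delta[b](d)$; using that $c \rightarrowtail \delta[b](d)$ means by adjunction $\varepsilon[b]$-nothing — here the cover hypothesis enters: extensivity gives $c \rightarrowtail \delta[b](c)$, so $c \rightarrowtail \overline d$, hence $c \rightarrowtail d \wedge \overline d = \emptyset$, i.e. $c = \emptyset$. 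This yields $\varepsilon[b](\overline d) \rightarrowtail \overline{\delta[b](d)}$ by the Boolean disjointness criterion.

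For the reverse inclusion $\overline{\delta[b](d)} \rightarrowtail \varepsilon[b](\overline d)$, by the adjunction it suffices to show $\delta[b](\overline{\delta[b](d)}) \rightarrowtail \overline d$, equivalently (Boolean criterion) $\delta[b](\overline{\delta[b](d)}) \wedge d = \emptyset$. Set $c' := \delta[b](\overline{\delta[b](d)}) \wedge d$. Then $c' \rightarrowtail d$ gives $\delta[b](c') \rightarrowtail \delta[b](d)$; and $c' \rightarrowtail \delta[b](\overline{\delta[b](d)})$, so by adjunction $\varepsilon[b]$-nothing again — I would instead use $c' \rightarrowtail d$ together with extensivity $c' \rightarrowtail \delta[b](c')$ and the disjointness $\delta[b](\overline{\delta[b](d)})$ vs $\delta[b](d)$: since $\overline{\delta[b](d)} \rightarrowtail \overline{\delta[b](d)}$, adjunction gives $\delta[b](\overline{\delta[b](d)}) \rightarrowtail$ nothing directly, so here the genuine input is anti-extensivity of opening $\delta[b]\varepsilon[b] \rightarrowtail \mathrm{id}$ — not applicable — rather: $c' \rightarrowtail \delta[b](\overline{\delta[b](d)})$ and $c' \rightarrowtail d \rightarrowtail \delta[b](d)$; apply $\varepsilon[b]$ is unhelpful. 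The clean finish: by extensivity $c' \rightarrowtail \delta[b](c')$, and $c' \rightarrowtail d$ gives $\delta[b](c') \rightarrowtail \delta[b](d)$; also $c' \rightarrowtail \delta[b](\overline{\delta[b](d)})$ combined with... I expect the main obstacle to be precisely this reverse direction — finding the right place to insert the cover hypothesis so that disjointness propagates through $\delta[b]$. I anticipate the resolution is: $c' \wedge \delta[b](d) = c'$ (since $c' \rightarrowtail d \rightarrowtail \delta[b](d)$), while also one shows $c' \wedge \delta[b](d) \rightarrowtail \delta[b](\overline{\delta[b](d)}) \wedge \delta[b](d) = \delta[b](\overline{\delta[b](d)} \wedge \delta[b](d)) = \delta[b](\emptyset) = \emptyset$ using commutativity of $\delta[b]$ with $\bigwedge$ — wait, $\delta[b]$ commutes with $\bigvee$, not $\bigwedge$, so that step fails and must be replaced by the adjunction-plus-cover argument sketched above, which is the delicate point I would work out carefully.

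In summary: both directions reduce, via the adjunction and the Boolean disjointness criterion, to showing a certain meet is $\emptyset$, and both such facts follow once extensivity of $\delta[b]$ (from the cover hypothesis) lets a disjointness $x \rightarrowtail \overline{y}$ be promoted from $\delta[b](x)$ back to $x$. The only subtlety, and the step I would be most careful about, is that $\delta[b]$ commutes with joins but not meets, so disjointness cannot be pushed forward through $\delta[b]$ directly and must instead be recovered through the adjunction together with extensivity.
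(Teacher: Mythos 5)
There is a genuine gap in each direction. In your first direction you set $c = \delta[b](d) \wedge \varepsilon[b](\overline{d})$, correctly get $\delta[b](c) \rightarrowtail \overline{d}$ from the adjunction and then $c \rightarrowtail \overline{d}$ from extensivity; but the next step, ``hence $c \rightarrowtail d \wedge \overline{d} = \emptyset$'', is a non sequitur: by construction you only know $c \rightarrowtail \delta[b](d)$, not $c \rightarrowtail d$, and $\delta[b](d) \wedge \overline{d}$ is in general nonzero precisely because dilation is extensive rather than anti-extensive. The second direction you explicitly leave unresolved (you correctly observe that $\delta[b]$ commutes with joins, not meets, and then stop at ``the delicate point I would work out carefully''), so neither inclusion is actually established.

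The deeper issue is that no combination of adjunction, monotonicity, extensivity and Booleanness can close either direction, because the duality $\varepsilon[b](\overline{d}) = \overline{\delta[b](d)}$ is not a formal consequence of these properties: it also needs the structuring element to be symmetric (roughly $v \in U(b(w_t)) \Leftrightarrow w \in U(b(v_t))$), which is exactly the role of the transposed element $\check{B}$ in the classical set identity $\varepsilon_B(X^C) = [\delta_{\check{B}}(X)]^C$. Concretely, in the Boolean morpho-category $(\mathcal{P}(\{1,2\}),\subseteq)$ with $b(1) = \{1\}$ and $b(2) = \{1,2\}$ (covered, since $x \in b(x)$) and $d = \{2\}$, one computes $\delta[b](d) = \{1,2\}$, hence $\overline{\delta[b](d)} = \emptyset$, while $\varepsilon[b](\overline{d}) = \varepsilon[b](\{1\}) = \{1\}$; swapping to $b(1)=\{1,2\}$, $b(2)=\{2\}$ kills the other inclusion. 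So the obstruction you sensed is a missing hypothesis, not a missing trick. For comparison, the paper does not argue at the lattice level at all: it unfolds $\varepsilon[b](\overline{d})$ and $\overline{\delta[b](d)}$ as colimits of the diagrams $D^\varepsilon_{\overline{d}}$ and $\{e \mid \exists v \in U(d),\ b(v_t) {\not \rightarrowtail} e\}$ and compares them membership-wise; its forward inclusion uses the cover hypothesis where symmetry is really needed, and its reverse inclusion (``by following the same process'') silently consumes that same symmetry. Your route is genuinely different in style, but in its current form it neither closes the true half nor can it, without an added symmetry assumption on $b$.
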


\begin{proof}
By definition of erosion and dilation, we have:

\begin{enumerate}
\item $\varepsilon[b](\overline{d}) = \bigvee \{e \mid \forall v \in U(e), b(v_t) \rightarrowtail \overline{d}\}$
\item $\overline{\delta[b](d)} = \bigvee \{e \mid \exists v \in U(d), b(v_t) {\not \rightarrowtail} e\}$
\end{enumerate}
By the first point we have for every $e \in D^\varepsilon_{\overline{d}}$ that for every $v \in U(e)$,  $b(v_t) {\not \rightarrowtail} \overline{\overline{d}} = d$ ($\mathcal{C}$ is Boolean).  \\ As $\mathcal{C}$ is covered by $b$, this means that for every $e \in D^\varepsilon_{\overline{d}}$, there exists $v \in U(d)$ such that $b(v_t) {\not \rightarrowtail} e$, and then $D^\varepsilon_{\overline{d}} \subseteq \bigvee \{e \mid \exists v \in U(d), b(v_t) {\not \rightarrowtail} e\}$. 

\medskip
By following the same process, we can show that $\bigvee \{e \mid \exists v \in U(d), b(v_t) {\not \rightarrowtail} e\} \subseteq D^\varepsilon_{\overline{d}}$, and then  concluding that $\varepsilon[b](\overline{d}) = \overline{\delta[b](d)}$. 
\end{proof}

This cover condition added to atomicity allows us to further compute erosions. Indeed, we have the following property:

\begin{proposition}
\label{propo:algorithm erosion}
If $\mathcal{C}$ is covered by $b$ and has atoms for $U$, then 

$$\varepsilon[b](d) = 
\left\{
\begin{array}{c}
\bigvee \{c_{v_t} \mid v \in U(d)~\mbox{and}~b(v_t) \rightarrowtail d\} \\
\bigvee \\
\bigvee \{b(v_t) \mid v \in U(d)~\mbox{and}~b(v_t) \rightarrowtail d~\mbox{and}~\forall v' \in b(v_t), b(v'_t) \rightarrowtail d\}
\end{array}
\right.$$
\end{proposition}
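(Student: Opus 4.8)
The plan is to establish the two inclusions $R \rightarrowtail \varepsilon[b](d)$ and $\varepsilon[b](d) \rightarrowtail R$ separately, where $R$ denotes the right-hand side, $R = R_1 \vee R_2$ with $R_1 = \bigvee\{c_{v_t}\mid v\in U(d),\ b(v_t)\rightarrowtail d\}$ and $R_2 = \bigvee\{b(v_t)\mid v\in U(d),\ b(v_t)\rightarrowtail d,\ \forall v'\in U(b(v_t)),\ b(v'_t)\rightarrowtail d\}$. This is legitimate because $\rightarrowtail$ is a partial order, so it is enough to get both directions.

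For $R \rightarrowtail \varepsilon[b](d)$ the idea is to check that every generator of $R$ already belongs to the diagram $D^\varepsilon_d$, whence it maps into $\varepsilon[b](d) = \bigvee D^\varepsilon_d$. For a generator $c_{v_t}$ of $R_1$ one has $U(c_{v_t}) = \{v_t\}$ and $(v_t)_t = v_t$, so membership in $D^\varepsilon_d$ amounts to $b(v_t)\rightarrowtail d$, which is assumed. For a generator $b(v_t)$ of $R_2$, the cover hypothesis applied to the atom $c_{v_t}$ gives $c_{v_t}\rightarrowtail b(v_t)$, so $v_t$ is realised by an element of $U(b(v_t))$; hence the indexing condition of $R_2$ says precisely that $b(v_t)\in D^\varepsilon_d$. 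Taking colimits then yields $R_1,R_2\rightarrowtail\varepsilon[b](d)$, hence $R\rightarrowtail\varepsilon[b](d)$.

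For the reverse inclusion I would first identify the underlying set $U(\varepsilon[b](d)) = \{v\in U(d)\mid b(v_t)\rightarrowtail d\}$: the inclusion ``$\subseteq$'' follows from $\varepsilon[b](d)\in D^\varepsilon_d$ (Proposition~\ref{prop:stability1}) together with the anti-extensivity $\varepsilon[b](d)\rightarrowtail d$ granted by Theorem~\ref{extensivity} (since $\mathcal{C}$ is covered by $b$), which realises $U(\varepsilon[b](d))$ inside $U(d)$ through the monic, and ``$\supseteq$'' is witnessed by the atoms $c_{v_t}$ of the previous step. Consequently $U(R_1) = U(\varepsilon[b](d))$, so $R$ and $\varepsilon[b](d)$ already have the same underlying set and only their ``structure'' remains to be compared. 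For this I would feed $\varepsilon[b](d)$ into the cover inequality $\varepsilon[b](d)\rightarrowtail\bigvee\{b(v_t)\mid v\in U(\varepsilon[b](d))\}$, split the join according to whether $b(v_t)\in D^\varepsilon_d$ (the sub-join over such $v$ being exactly $R_2$), intersect the inequality back with $\varepsilon[b](d)$, and use distributivity to reduce to showing that each ``bad'' fragment $\varepsilon[b](d)\wedge b(v_t)$, for $v$ with $b(v_t)\not\rightarrowtail\varepsilon[b](d)$, already satisfies $\varepsilon[b](d)\wedge b(v_t)\rightarrowtail R$; since its underlying set is contained in $U(\varepsilon[b](d))$, its ``points'' lie in $R_1$, and the surviving structure has to be traced back to the $b(w_t)$ occurring in $R_2$.

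The hard part is precisely this last reduction. If $\mathcal{C}$ is moreover atomic for $U$ it is vacuous: then $\varepsilon[b](d)$ is sup-generated by its atoms, so $\varepsilon[b](d) = R_1$ and $R_2$ is redundant. In the merely ``has atoms'' case --- for instance graphs with the vertex forgetful functor --- the term $R_2$ genuinely carries the non-atomic structure of the erosion, and the crux is to show that every piece of a structuring object $b(v_t)$ that remains inside $\varepsilon[b](d)$ is covered by some $b(w_t)$ which is itself closed under $b$ and satisfies $b(w_t)\rightarrowtail d$. This is where the cover hypothesis must be used most essentially, and I expect it to be the main obstacle of the proof.
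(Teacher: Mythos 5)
Your first inclusion is fine and coincides with the paper's: each generator $c_{v_t}$ of $R_1$ and each generator $b(v_t)$ of $R_2$ is checked to lie in $D^\varepsilon_d$, so their join maps into $\varepsilon[b](d)$. The genuine gap is in the reverse inclusion $\varepsilon[b](d)\rightarrowtail R$. You correctly identify the underlying set $U(\varepsilon[b](d))=U(R_1)$ and set up a reduction via the cover inequality and distributivity, but you then stop at exactly the step that carries all the content --- showing that each fragment $\varepsilon[b](d)\wedge b(v_t)$, for $v$ whose structuring object is not eligible for $R_2$, still maps into $R$ --- and you explicitly defer it as ``the main obstacle''. As written, the proposal therefore does not prove the proposition: the non-atomic case, which is the only case where $R_2$ is not redundant, is precisely the case left open.

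Your instinct about where the difficulty lies is, however, sound, and it is worth comparing with the paper. The paper disposes of this direction in one line by asserting that coverage gives, for every $e\in D^\varepsilon_d$ and every $v\in U(e)$, that $b(v'_t)\rightarrowtail d$ for \emph{all} $v'\in U(b(v_t))$, which would place every $e\in D^\varepsilon_d$ under $R$. But coverage only yields $e\rightarrowtail\bigvee\{b(v_t)\mid v\in U(e)\}$, i.e.\ $U(e)\subseteq\bigcup_{v}U(b(v_t))$, not the containment $U(b(v_t))\subseteq U(e)$ that this assertion implicitly uses. In fact the missing step is false in general: in the morpho-category of subgraphs of a path $a'-a-u-w-z-z'$ with the vertex-forgetful functor and $b(x)=G_x$ the neighborhood subgraph, take $d$ to be the induced subpath on $\{a,u,w,z\}$. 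Then $G_u\rightarrowtail d$ and $G_w\rightarrowtail d$, so the one-edge subgraph on $\{u,w\}$ lies in $D^\varepsilon_d$ and the edge $\{u,w\}$ belongs to $\varepsilon[b](d)$; yet $a\in U(G_u)$ with $G_a\not\rightarrowtail d$ and $z\in U(G_w)$ with $G_z\not\rightarrowtail d$, so $R_2=\emptyset$ and $R=R_1$ consists of the two isolated vertices $u,w$. The hypotheses (coverage, atoms) hold but the stated identity fails, so no completion of your third paragraph along the proposed lines can exist; the identity is only salvageable under extra assumptions such as atomicity, where, as you observe, $\varepsilon[b](d)=R_1$ and $R_2$ is redundant.
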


\begin{proof}
As $\mathcal{C}$ is covered by $b$, we have both that $\bigvee \{c_{v_t} \mid v \in U(d)~\mbox{and}~b(v_t) \rightarrowtail d\}$ and $\bigvee \{b(v_t) \mid v \in U(d)~\mbox{and}~b(v_t) \rightarrowtail d~\mbox{and}~\forall v' \in b(v_t), b(v'_t) \rightarrowtail d\}$ belong to $D^\varepsilon_d$, and then so is their supremum, whence we can deduce that this supremum is smaller than $\varepsilon[b](d)$ with respect to $\rightarrowtail$. 
\\
Now, as $\mathcal{C}$ is covered by $b$, we have for every $e \in D^\varepsilon_d$ that for every $v \in U(e)$ both $b(v_t) \rightarrowtail d$ and for every $v' \in U(b(v_t))$ that $b(v'_t) \rightarrowtail d$. And then, we have that 

$$e \rightarrowtail 
\left\{
\begin{array}{c}
\bigvee \{c_{v_t} \mid v \in U(d)~\mbox{and}~b(v_t) \rightarrowtail d\} \\
\bigvee \\
\bigvee \{b(v_t) \mid v \in U(d)~\mbox{and}~b(v_t) \rightarrowtail d~\mbox{and}~\forall v' \in b(v_t), b(v'_t) \rightarrowtail d\}
\end{array}
\right.$$
whence we can conclude that 
$$\varepsilon[b](d) \rightarrowtail
\left\{
\begin{array}{c}
\bigvee \{c_{v_t} \mid v \in U(d)~\mbox{and}~b(v_t) \rightarrowtail d\} \\
\bigvee \\
\bigvee \{b(v_t) \mid v \in U(d)~\mbox{and}~b(v_t) \rightarrowtail d~\mbox{and}~\forall v' \in b(v_t), b(v'_t) \rightarrowtail d\}
\end{array}
\right.$$
\end{proof}

\begin{remark}
As $\mathcal{C}$ is supposed covered by $b$ in Proposition~\ref{propo:algorithm erosion}, we have the following property: $\forall v \in U(t), c_v \rightarrowtail b(v)$. And then, either $\varepsilon[b](d) = 
\bigvee \{c_{v_t} \mid v \in U(d)~\mbox{and}~b(v_t) \rightarrowtail d\}$ if for every $v \in U(d)$ such that $b(v_t) \rightarrowtail d$ there exists $v' \in U(b(v_t))$ such that $b(v'_t) {\not \rightarrowtail} d$, or  $\varepsilon[b](d) = \bigvee \{b(v_t) \mid v \in U(d)~\mbox{and}~b(v_t) \rightarrowtail d~\mbox{and}~\forall v' \in b(v_t), b(v'_t) \rightarrowtail d\}$ otherwise. 
\end{remark}

This gives rise to the following algorithm:

\medskip
\begin{algorithmic}[1]
\Procedure{Erosion}{$d \in |\mathcal{C}|$ and $b : U(t) \to |\mathcal{C}|$}
\State $S = \{v \in U(d) \mid b(v_t) \rightarrowtail d\}$
\State $c = \bigvee \{c_{v_t} \mid v \in S\}$ 
\While{$S$ is not empty}
    \State choose $v \in S$
    \If{$\forall v' \in U(b(v_t)), b(v'_t) \rightarrowtail d$}
    \State $c = c \vee b(v_t)$ 
    \EndIf
    \State $S = S \setminus \{v\}$
\EndWhile	
\State \Return $c$
\EndProcedure
\end{algorithmic}

\begin{corollary}
If $\mathcal{C}$ is covered by $b$ and has atoms for $U$, then the algorithm {\sc Erosion} returns $\varepsilon[b](d)$.
\end{corollary}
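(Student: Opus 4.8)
The plan is to show that the object returned by the procedure {\sc Erosion} is exactly the closed-form expression for $\varepsilon[b](d)$ established in Proposition~\ref{propo:algorithm erosion}; since that proposition already proves this expression equals $\varepsilon[b](d)$ under the very hypotheses of the corollary (that $\mathcal{C}$ is covered by $b$ and has atoms for $U$), nothing beyond a bookkeeping argument about the algorithm is required.

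First I would deal with termination and well-definedness. Assuming, as is implicit for all the algorithmic statements in the paper, that $U(d)$ is finite, the set $S$ computed on line~2 is finite; each pass through the \textbf{while} loop removes exactly one element from $S$ (line~9) and adds nothing, so the loop halts after $|S|$ iterations. All suprema occurring on lines~3 and~7 are finite joins of objects of $\mathcal{C}$, which exist because $\mathcal{C}$ is co-complete, and the tests ``$b(v_t)\rightarrowtail d$'' on line~2 and ``$\forall v'\in U(b(v_t)),\ b(v'_t)\rightarrowtail d$'' on line~6 are meaningful because $\rightarrowtail$ is the partial order on $\mathcal{C}$ and each $U(b(v_t))$ is finite.

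Next I would prove, by induction on the number of completed iterations, the loop invariant: writing $S_0=\{v\in U(d)\mid b(v_t)\rightarrowtail d\}$ for the value of $S$ right after line~2, and $T=S_0\setminus S$ for the set of elements already processed, the variable satisfies
$$c \;=\; \bigvee\{c_{v_t}\mid v\in S_0\}\ \vee\ \bigvee\{\,b(v_t)\mid v\in T,\ \forall v'\in U(b(v_t)),\ b(v'_t)\rightarrowtail d\,\}.$$
The base case $T=\emptyset$ is precisely the assignment on line~3. For the inductive step, one pass of the loop picks some $v\in S$, deletes it from $S$ (so $T$ gains $v$), and replaces $c$ by $c\vee b(v_t)$ exactly when the guard on line~6 holds for $v$; since that guard depends only on $v$, $b$ and $d$ — not on the current value of $c$ — and since $\bigvee$ is associative, commutative and idempotent, the new value of $c$ is again of the claimed form with $T$ enlarged by $v$. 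When the loop exits, $S=\emptyset$, hence $T=S_0$, and the invariant gives
$$c \;=\; \bigvee\{c_{v_t}\mid v\in S_0\}\ \vee\ \bigvee\{\,b(v_t)\mid v\in S_0,\ \forall v'\in U(b(v_t)),\ b(v'_t)\rightarrowtail d\,\},$$
which is exactly the right-hand side of the formula in Proposition~\ref{propo:algorithm erosion} (recalling that ``$v\in S_0$'' abbreviates ``$v\in U(d)$ and $b(v_t)\rightarrowtail d$''). By that proposition this equals $\varepsilon[b](d)$, and it is the value returned on line~11.

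The only genuinely delicate point is the order-independence of the loop, and it is settled by the observation, used in the inductive step above, that each element of $S_0$ either contributes $b(v_t)$ to the final join or does not, according to a condition that does not reference the running accumulator $c$; together with idempotence of $\bigvee$ this makes the result of iterating over $S_0$ independent of the choices made on line~5. Everything else is routine, built directly on Proposition~\ref{propo:algorithm erosion}.
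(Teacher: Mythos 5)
Your proposal is correct and follows the same route as the paper, which simply states that the corollary is a direct consequence of Proposition~\ref{propo:algorithm erosion}; you merely make explicit the termination, loop-invariant and order-independence bookkeeping that the paper leaves implicit. The invariant you state is the right one and your identification of the returned value with the formula of the proposition is exactly the intended argument.
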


\begin{proof}
Direct consequence of Proposition~\ref{propo:algorithm erosion}.
\end{proof}

\section{Morpholizable categories}
\label{morpholizable categories}

Complete toposes allow us to generate a whole family of morpho-categories, the Heyting algebra $Sub(X)$ where $X$ is any object of the topos under consideration. The problem of these morpho-categories is that subobjects are equivalence classes of monics defined up to domain isomorphisms. To define our two mathematical morphology operations of erosion and dilation from a structuring element, the notions of point and neighborhood around this point are essential. We then need to choose one subobject among the set of isomorphic domains to get this set of points and to define the notion of structuring elements (see Example~\ref{Ex:morpho-categories}). But, in practice, chosen subobjects are linked by inclusion morphisms, and not by monics. 

Here, we will define a family of categories, the {\em morpholizable categories}, which as presheaf toposes will also allow us to generate a whole family of morpho-categories, but taking better account of the notion of inclusion between subobjects. 

\subsection{Definitions and results}



\begin{definition}[Morpholizable category]
\label{morpholizable category}
A {\bf morpholizable category} $\mathcal{C}$ is a complete and co-complete category equipped with a family of faithful functors $(U_i)_{i \in I} : \mathcal{C} \to Set$ such that the following properties hold:

\begin{itemize}
\item {\em Preservation of bounds:} for every $i \in I$, $U_i$ preserves limits and colimits. 
\item {\em Preservation of equality:} for all objects $c,d \in |\mathcal{C}|$, if for every $i \in I$, $U_i(c) = U_i(d)$, then $c = d$.
\end{itemize}
\end{definition}


\begin{example}
\label{Ex:morpholizable}
Given a presheaf topos $Set^{\mathcal{B}^{op}}$, we can define the family of forgetful functors $(U_b : F \mapsto F(b))_{b \in |\mathcal{B}|}$. Given a diagram $\mathcal{D}$ in $Set^{\mathcal{B}^{op}}$, we can define the two presheaves $F_{lim},F_{colim} : \mathcal{B}^{op} \to Set$ which to every $b \in \mathcal{B}$ associates the limit and the colimit of the diagram $\mathcal{D}_b = \{F(b) \mid F \in \mathcal{D}\}$ (this makes sense because the category $Set$ is complete and cocomplete). Hence, each $U_b$ preserves both limits and colimits. Now, the family $(U_b : F \mapsto F(b))_{b \in |\mathcal{B}|}$ obviously preserves equality. \\ Conversely, we cannot assure that each $U_b$ is faithful for all presheaf toposes. The reason is given two natural transformations $\alpha,\beta : F \Rightarrow F'$, $\alpha \neq \beta$ only means that there exists $b \in \mathcal{B}$ such that $\alpha_b \neq \beta_b$, and then nothing prevents that there are $b' \neq b \in \mathcal{B}$ such that $\alpha_{b'} = \beta_{b'}$ (there is a priori no link between $F(b)$ and $F(b')$). This is of course not true for all the examples we have presented so far (sets, graphs, and hypergraphs) which proves that for theses examples each forgetful functor is faithful, and then all are morpholizable categories. \\
We will also develop another example of morpholizable category, the simplicial complexes, which can not be defined as a presheaf topos~\footnote{Simplicial complexes are particular hypergraphs, but unlike the category of hypergraphs, the category of simplicial complexes is not a topos but only a quasi-topos (i.e. it is locally cartesian closed and the subobject classifier $\Omega$ only classifies strong monics).}.  

\paragraph{\bf Simplicial Complexes.} Simplicial complexes are geometrical objects defined by combinatory data. They enable to specify topological spaces of arbitrary
dimensions. Simplicial complexes define particular hypergraphs~\footnote{Let us recall that a hypergraph is a set $V$ whose elements are called {\em vertices} together with a family $E = (E_i)_{i \in I}$ of subsets of $V$ called {\em hyperedges}. A morphism of hypergraphs between $(V,E)$ and $(W,F)$ is then a mapping $f : V \to W$ such that for every $E_i \in E$, $f(E_i) \in F$.}. Indeed, a simplicial complex $K$ consists of a set of vertices $V$ and a set $E$ of subsets of $V$ such that $E$ is closed under subsets, and for every $v \in V$, $\{v\} \in E$ and $\emptyset \notin E$. Let us note $\mathcal{K}$ the full subcategory of $\mathcal{H}$ whose the objects are simplicial complexes. It is known that the category of simplicial complexes has limits and colimits for small diagrams. The two forgetful functors  $U_V$ and $U_E$ are defined as for hypergraphs, and it is quite easy to show that they are faithful, and preserve limits and colimits, and equalities.
\end{example}

Since $\mathcal{C}$ has limits and colimits, it has terminal and initial objects which are respectively the limit and the colimit of the empty diagram. The initial object is denoted by $\emptyset$, and the terminal one is denoted by $\mathbb{1}$. 
 
 \medskip
 By standard results of category theory, the following properties are satisfied:
 
 \begin{proposition}
 \label{standard properties} 
 $\mathcal{C}$ satisfies the two following properties: for every $i \in I$
 \begin{enumerate}
 \item each $U_i$ preserves and reflects monics and epics;
 \item the pushout of monics is a monic. By duality, the pullback of monics is a monic. More generally, both limits and colimits of monics are monics.
 \end{enumerate}
 \end{proposition}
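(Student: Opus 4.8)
The plan is to prove both parts directly from the defining properties of a morpholizable category, namely that each $U_i$ is faithful and preserves all limits and colimits, together with the fact that $Set$ itself has all the relevant properties.

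First, for part (1), I would argue as follows. Recall the standard categorical fact that in any category, a morphism $f$ is a monic if and only if the square with $f$ on two sides and identities on the other two is a pullback; dually, $f$ is an epic iff the corresponding square is a pushout. Since each $U_i$ preserves limits, it takes the pullback square witnessing ``$f$ is monic'' to a pullback square in $Set$, hence $U_i(f)$ is monic in $Set$, i.e. an injection; so $U_i$ preserves monics. Dually, using preservation of colimits, $U_i$ preserves epics. For reflection: suppose $U_i(f)$ is monic in $Set$ for some $i$. Given $g,h$ with $f\circ g = f\circ h$, apply $U_i$ to get $U_i(f)\circ U_i(g) = U_i(f)\circ U_i(h)$, whence $U_i(g) = U_i(h)$ by injectivity of $U_i(f)$; since $U_i$ is faithful this forces $g = h$, so $f$ is monic in $\mathcal{C}$. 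Dually $U_i$ reflects epics. (In fact a single $i$ suffices for reflection, and faithfulness plus preservation of the relevant (co)limit shape is all that is used.)

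Next, for part (2), I would combine preservation and reflection. Let $f : a \rightarrowtail b$ and $g : a \rightarrowtail c$ be monics and form their pushout $b \to p \leftarrow c$ in $\mathcal{C}$. Fix any $i \in I$. Since $U_i$ preserves colimits, $U_i$ applied to this pushout square is a pushout in $Set$; and since $U_i$ preserves monics by part (1), $U_i(f)$ and $U_i(g)$ are injections. Now invoke the elementary fact that in $Set$ the pushout of a pair of injections is again a pair of injections (the pushout is a quotient of the disjoint union $U_i(b) \sqcup U_i(c)$ by the equivalence relation generated by identifying the images of $U_i(a)$, and because the legs are injective this identification does not collapse distinct points within $U_i(b)$ or within $U_i(c)$). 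Hence the induced maps $U_i(b) \to U_i(p)$ and $U_i(c) \to U_i(p)$ are injections, i.e. monics in $Set$, for every $i$. By the reflection part of (1), the maps $b \to p$ and $c \to p$ are monics in $\mathcal{C}$. Dualizing gives the statement for pullbacks. The ``more generally'' clause for arbitrary limits and colimits of monics follows by the same pattern: the property of being a monic cocone/cone is detected by $U_i$ (preservation gives one direction, reflection the other), and in $Set$ the corresponding colimit/limit of a diagram of injections has injective legs.

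The main obstacle, such as it is, is not conceptual but a matter of marshalling the right elementary facts about $Set$ — specifically that pushouts of injections have injective legs (and dually, which is automatic since pullbacks always preserve monics). Everything else is a routine transfer along the faithful, (co)limit-preserving functors $U_i$, exactly as in Proposition~\ref{is a lattice} where the analogous facts for toposes were used. One should be slightly careful to state precisely which hypothesis is needed where: preservation of limits/colimits is what transports the (co)limit characterizations of monic/epic forward, while faithfulness is what is needed to transport them back.
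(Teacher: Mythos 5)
Your proof is correct and follows essentially the same route as the paper's: part (1) by combining faithfulness (which gives reflection of monics and epics) with preservation of limits and colimits (which, via the pullback/pushout characterizations, gives preservation), and part (2) by pushing the diagram into $Set$, invoking the elementary fact about pushouts of injections there, and reflecting back along $U_i$. The only slight divergence is one of reading: the paper's proof treats the pushout of a monic along an \emph{arbitrary} morphism (concluding that the parallel leg is monic), whereas you treat a span of two monics; your argument adapts verbatim to the paper's version since the corresponding $Set$-level fact is equally standard.
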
 
 
 \begin{proof}
 \begin{enumerate}
  \item A consequence of the fact that $U_i$ is faithful is that each $U_i$ reflects monics and epics. This property, associated with the first property of Definition~\ref{morpholizable category}, leads to the fact that each $U_i$ preserves and reflects epics and monics.
 \item Let
 $$
\xymatrix{
 c \ar[r] \ar[d]_{monic} & d \ar[d]^{f} \\
 c' \ar[r] & d'
  }
$$
be a pushout diagram in $\mathcal{C}$. From the first property of Proposition~\ref{standard properties}, each $U_i$ preserves and reflects monics and epics. Therefore, since $U_i$ also preserves pushout (the first property of Definition~\ref{morpholizable category}), and since the pushout of monics is monic in $Set$, $U_i(f)$ is monic in $Set$. Since $U_i$ reflects monics, we have that $f$ is monic in $\mathcal{C}$.   The proofs for pullback, limits and colimits are similar. 
 \end{enumerate}
 \end{proof} 
 
 \begin{definition}[Inclusion]
 \label{inclusion}
Let $\mathcal{C}$ be a morpholizable category, with its family of faithful functors $(U_i)_{i \in I}$. A morphism $m : c \to c'$ in $\mathcal{C}$ is called an {\bf inclusion} when for every $i \in I$, each $U_i(m)$ is a set-theoretical inclusion. We will then denote such an inclusion by $c \hookrightarrow c'$. And we will denote by $\mathcal{I}$ the class of inclusions of $\mathcal{C}$. 
 \end{definition}
 
 \begin{proposition}
 \label{is a partial order}
 $\mathcal{I}$ is a partial order.
 \end{proposition}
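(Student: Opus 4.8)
The claim is that $\mathcal{I}$, the class of inclusions in a morpholizable category $\mathcal{C}$, is a partial order. I read this as asserting that $\mathcal{I}$ gives a partial order on $|\mathcal{C}|$ in the usual sense: it is reflexive (there is an inclusion $c \hookrightarrow c$ for every $c$), transitive (inclusions compose to inclusions), and antisymmetric (if $c \hookrightarrow c'$ and $c' \hookrightarrow c$ then $c = c'$). The plan is to verify these three properties directly from Definition~\ref{inclusion}, pushing all the work down to $Set$ via the family $(U_i)_{i \in I}$ and then lifting conclusions back up using faithfulness and the preservation-of-equality axiom.

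\textbf{Reflexivity and transitivity.} For reflexivity, I would note that for each object $c$ the identity $id_c : c \to c$ is a morphism, and $U_i(id_c) = id_{U_i(c)}$ is the set-theoretical inclusion $U_i(c) \subseteq U_i(c)$; hence $id_c$ is an inclusion, so $c \hookrightarrow c$. For transitivity, suppose $m : c \to c'$ and $m' : c' \to c''$ are inclusions. Their composite $m' \circ m : c \to c''$ is a morphism of $\mathcal{C}$, and for every $i \in I$ we have $U_i(m' \circ m) = U_i(m') \circ U_i(m)$, which is the composite of two set-theoretical inclusions $U_i(c) \subseteq U_i(c')$ and $U_i(c') \subseteq U_i(c'')$, hence is itself the set-theoretical inclusion $U_i(c) \subseteq U_i(c'')$. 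Thus $m' \circ m$ is an inclusion. Both of these are routine and use only functoriality.

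\textbf{Antisymmetry.} This is the substantive step, and the one I expect to be the main (mild) obstacle, since it is where the morpholizability axioms are genuinely used. Suppose $m : c \hookrightarrow c'$ and $n : c' \hookrightarrow c$. For each $i \in I$, $U_i(m) : U_i(c) \hookrightarrow U_i(c')$ and $U_i(n) : U_i(c') \hookrightarrow U_i(c)$ are mutually inverse set-theoretical inclusions, which forces $U_i(c) = U_i(c')$ as sets (each is a subset of the other), and moreover forces $U_i(m)$ and $U_i(n)$ to be the identity map on this common set. Since this holds for every $i \in I$, the preservation-of-equality axiom of Definition~\ref{morpholizable category} yields $c = c'$. (One can also observe that $U_i(n \circ m) = U_i(n) \circ U_i(m) = id_{U_i(c)} = U_i(id_c)$ for all $i$; by faithfulness of each $U_i$ this gives $n \circ m = id_c$, and symmetrically $m \circ n = id_{c'}$, so $m$ is an isomorphism — but once we know $c = c'$ the relation $\hookrightarrow$ is literally a relation on $|\mathcal{C}|$ and antisymmetry in the order-theoretic sense is exactly the statement $c = c'$.) Combining reflexivity, transitivity, and antisymmetry, $\mathcal{I}$ is a partial order on $|\mathcal{C}|$.
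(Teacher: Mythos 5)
Your proof is correct and follows essentially the same route as the paper's: reflexivity and transitivity are routine functoriality checks, and antisymmetry is obtained by pushing everything down to $Set$ and invoking the preservation-of-equality axiom. In fact your antisymmetry argument is slightly more streamlined than the paper's, which first establishes $c \simeq d$ before applying preservation of equality, whereas you observe directly that mutual set-theoretic inclusion forces $U_i(c) = U_i(d)$ for every $i$, which is all that axiom needs.

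One point the paper treats as part of the statement and that you only touch on in a parenthetical: it also proves that there is \emph{at most one} inclusion $c \hookrightarrow d$ between any two objects (if $m \neq m'$ were two such inclusions, faithfulness of $U_i$ would give $U_i(m) \neq U_i(m')$, yet both must equal the unique set-theoretic inclusion $U_i(c) \subseteq U_i(d)$ --- a contradiction). This matters because $\mathcal{I}$ is a \emph{class of morphisms}, and the claim that it ``is a partial order'' is used later to treat $\mathcal{C}^{\mathcal{I}}$ as a poset category; for that one needs uniqueness of the inclusion witnessing $c \hookrightarrow d$, not merely that the induced relation on objects is reflexive, transitive and antisymmetric. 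Your reading of the statement as a relation on $|\mathcal{C}|$ is defensible, but you should add the one-line uniqueness argument to match what the construction actually relies on.
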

 
 \begin{proof}
 Reflexivity is obvious. Transitivity is a direct consequence of the facts that functors are structure-preserving maps between categories and that $U$ reflects monics.
 
For the anti-symmetry,
 it is sufficient to show that there exists at most one inclusion $c \hookrightarrow d$ between objects, and if there exists an inclusion $d \hookrightarrow c$, then $c = d$.
 
Then, let us suppose that there exist two inclusions $m : c \hookrightarrow d$ and $m' : c \hookrightarrow d$ in $\mathcal{C}$. Since each $U_i$ is faithful, we have that $U_i(m) \neq U_i(m')$ as $m \neq m'$ which is not possible for set inclusion.
 
Let us suppose that there are both $c \hookrightarrow d$ and $d \hookrightarrow c$. By definition of inclusion, we then have that $U_i(c) = U_i(d)$ for each $i \in I$. Moreover, as there is at most one inclusion between objects, we have that $c \simeq d$, and then by the preservation of equality condition, we can conclude that $c = d$. 
\end{proof}
 
Now, we are going to see how to extract for each object $c \in |\mathcal{C}|$ a category $\mathcal{C}^\mathcal{I}_c$ which will be a morpho-category (see Theorem~\ref{complete lattice}). 
 
\begin{proposition}
\label{is factorizable}
Every morphism $f : c \to d$ in $\mathcal{C}$ can be factorized  uniquely as $i \circ e$ where $i$ is an inclusion and $e$ is an epic.
\end{proposition}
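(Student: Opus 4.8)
The plan is to build the factorization by taking the image of $f$ inside $d$, where "image" is computed using the fact that $\mathcal{C}$ has enough structure. First I would recall from Proposition~\ref{standard properties} and from the general theory that, since each $U_i$ preserves limits and colimits, we can form the usual epi--mono factorization of $f$ in $\mathcal{C}$: write $f = m_f \circ e_f$ with $e_f$ epic and $m_f : \mathrm{Im}(f) \rightarrowtail d$ monic. (This exists in any complete, co-complete, well-powered category; alternatively it can be obtained as the equalizer of the cokernel pair of $f$, whose construction is preserved by each $U_i$.) The point of the proposition, however, is not merely an epi--mono factorization but one in which the monic part is an honest \emph{inclusion} in the sense of Definition~\ref{inclusion}, i.e. each $U_i(m_f)$ is literally a set-theoretical inclusion, not just an injection.

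The key step is therefore to replace $m_f : \mathrm{Im}(f) \rightarrowtail d$ by an isomorphic copy sitting inside $d$ along an inclusion. Since each $U_i$ preserves monics (Proposition~\ref{standard properties}), $U_i(m_f)$ is an injection $U_i(\mathrm{Im}(f)) \hookrightarrow U_i(d)$; let $A_i \subseteq U_i(d)$ denote its image. Using the preservation-of-equality axiom together with faithfulness, I would argue that there is an object $c' \in |\mathcal{C}|$, isomorphic to $\mathrm{Im}(f)$ via some $\theta : \mathrm{Im}(f) \xrightarrow{\ \simeq\ } c'$, such that $U_i(c') = A_i$ for every $i$ and such that the composite $m_f \circ \theta^{-1} : c' \to d$ is an inclusion. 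Concretely, one transports the $\mathcal{C}$-structure of $\mathrm{Im}(f)$ along the bijections $U_i(\mathrm{Im}(f)) \to A_i$; the preservation-of-equality condition guarantees that this determines a genuine object of $\mathcal{C}$ (its behaviour under every $U_i$ is pinned down), and faithfulness guarantees the transported morphism is the unique inclusion $c' \hookrightarrow d$. Setting $i := m_f \circ \theta^{-1}$ and $e := \theta \circ e_f$ gives $f = i \circ e$ with $i$ an inclusion and $e$ epic (an isomorphism composed with an epic is epic).

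For uniqueness, suppose $f = i_1 \circ e_1 = i_2 \circ e_2$ are two such factorizations through inclusions $i_1 : c_1 \hookrightarrow d$ and $i_2 : c_2 \hookrightarrow d$. Applying each $U_i$ and using that $U_i(e_1), U_i(e_2)$ are surjective (each $U_i$ preserves epics) forces $U_i(c_1) = \mathrm{Im}(U_i(f)) = U_i(c_2)$ as subsets of $U_i(d)$, for every $i \in I$; by preservation of equality $c_1 = c_2$, and since there is at most one inclusion between two objects (Proposition~\ref{is a partial order}) we get $i_1 = i_2$, whence $e_1 = e_2$ because $i_1$ is monic. The main obstacle, and the part deserving care, is the middle step: rigorously producing the object $c'$ inside $d$. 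It is morally "transport of structure along a bijection", but one must invoke the two defining axioms of a morpholizable category in exactly the right way — preservation of limits/colimits to know the epi--mono factorization exists and is computed pointwise, and preservation of equality to know that prescribing the underlying sets $A_i \subseteq U_i(d)$ together with the restricted structure actually pins down a bona fide object of $\mathcal{C}$ with an inclusion into $d$.
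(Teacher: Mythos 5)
Your first step (the epi--mono factorization of $f$ via the equalizer of the cokernel pair $d \rightrightarrows d \coprod_c d$) is exactly the paper's construction, and your uniqueness argument --- which the paper in fact omits entirely --- is correct: applying each $U_i$ forces $U_i(c_1) = \mathrm{Im}(U_i(f)) = U_i(c_2)$ as subsets of $U_i(d)$, preservation of equality gives $c_1 = c_2$, the uniqueness of inclusions between two fixed objects (Proposition~\ref{is a partial order}) gives $i_1 = i_2$, and monicity of $i_1$ gives $e_1 = e_2$.

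The genuine gap is in your middle step, the ``transport of structure'' producing the object $c'$ with $U_i(c') = A_i$. The preservation-of-equality axiom of Definition~\ref{morpholizable category} is a pure uniqueness (separation) principle: it says there is \emph{at most one} object with prescribed values under all the $U_i$. It asserts no existence, and nothing in the definition of a morpholizable category guarantees that for a family of subsets $A_i \subseteq U_i(d)$ --- even one arising as the images of the injections $U_i(m_f)$ --- there is an object of $\mathcal{C}$ realizing them; faithfulness does not help either, since it constrains morphisms, not objects. So the sentence ``the preservation-of-equality condition guarantees that this determines a genuine object of $\mathcal{C}$'' is a misuse of the axiom, and $c'$ is never actually produced. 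The paper avoids the need for such an object: it takes $Im(f)$ to be the limit of $(d \rightrightarrows d \coprod_c d)$ computed \emph{in} $\mathcal{C}$ (which exists by completeness) and argues that the structural morphism $m : Im(f) \to d$ is \emph{already} an inclusion, because each $U_i$ sends this limit to the corresponding limit in $Set$, which is literally a subset of $U_i(d)$ equipped with its inclusion map, and each $U_i$ reflects epics so that $e$ is epic. (Your underlying worry --- that preservation of limits only determines $U_i(Im(f))$ up to isomorphism --- is legitimate in the abstract, but in all the paper's examples limits are computed pointwise and canonically, so $U_i(m)$ really is a set inclusion; in any case the resolution must come from how the limit is computed, not from conjuring a new object by transport of structure.) If you wanted to keep your two-stage plan, you would need an additional axiom or lemma stating that every monic into $d$ is isomorphic over $d$ to an inclusion --- which is essentially the content of the proposition itself.
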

 
\begin{proof}
Let $(d \rightrightarrows d \coprod_c d)$ where $d \coprod_c d$ is the pushout 
 
 $$
\xymatrix{
 c \ar[r]^f \ar[d]_f & d \ar[d] \\
 d \ar[r] & d \coprod_c d
  }
$$
Since $\mathcal{C}$ has limits, let us denote by $Im(f)$ the limit of $(d \rightrightarrows d \coprod_c d)$. By definition of limit, the limit $Im(f)$ comes with a morphism $m : Im(f) \to d$, and by the universality property of limits, we also have a unique  morphism $g : c \to Im(f)$. Since each $U_i$ preserves limits, $(U_i(Im(f)),U_i(m))$ is the limit of $(U_i(d) \rightrightarrows U_i(d) \coprod_{U_i(c)} U_i(d))$ in $Set$, and then $U_i(m) : U_i(Im(f)) \to U_i(d)$ is a monic and more precisely an inclusion, and $U_i(e) : U_i(c) \to U_i(Im(f))$ is a surjection. As each $U_i$ reflects both monics and epics, we can conclude that $m$ is an inclusion and $e$ an epic.
 \end{proof}
 
 Following~\cite{DGS91}, morpholizable categories are strongly inclusive. They then satisfy the following supplementary properties (see~\cite{DGS91} for their proofs):

 \begin{itemize}
 \item all isomorphisms are identities;
 \item any morphism which is both an inclusion and an epic is an identity;
 \item $\mathcal{I}$ is $\mathcal{I}$-right-cancellable in the sense that if $g \circ f \in \mathcal{I}$ and $g \in \mathcal{I}$, then $f \in \mathcal{I}$;
 \item $\mathcal{E}$ is $\mathcal{E}$-right-cancellable where $\mathcal{E}$ is the class of epics of $\mathcal{C}$ (i.e. $\mathcal{E}^{op}$ is $\mathcal{E}^{op}$-right-cancellable). 
 \end{itemize}
 
 We denote by $\mathcal{C}^\mathcal{I}$ the subcategory of $\mathcal{C}$ where $|\mathcal{C}^\mathcal{I}| = |\mathcal{C}|$ and morphisms are all the inclusions in $\mathcal{I}$. 
 
 \begin{proposition}
 \label{inclusions of limits and colimits}
 In $\mathcal{C}$, limits and colimits of inclusions are inclusions.
 \end{proposition}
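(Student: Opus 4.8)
The idea is to test the statement with the faithful functors $U_i$, reduce it to the analogous (and much easier) fact in $Set$, and then pull the conclusion back. So let $D$ be a diagram in $\mathcal{C}$ all of whose structure morphisms are inclusions — equivalently, a diagram in $\mathcal{C}^\mathcal{I}$ — and let $\lambda_j:\bigwedge D\to D(j)$ and $\gamma_j:D(j)\to\bigvee D$ be the legs of its limit and colimit computed in $\mathcal{C}$ (these exist, $\mathcal{C}$ being complete and cocomplete). Fix $i\in I$. Since $U_i$ preserves limits and colimits (Definition~\ref{morpholizable category}), $\bigl(U_i(\bigwedge D),(U_i\lambda_j)_j\bigr)$ is a limit cone and $\bigl(U_i(\bigvee D),(U_i\gamma_j)_j\bigr)$ a colimit cocone in $Set$ over $U_i\circ D$, which is a diagram of genuine set-theoretical inclusions.

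For the \emph{limit} I would argue directly: the limit in $Set$ of a diagram of set-inclusions is an intersection of subsets, so each $U_i\lambda_j$ is (up to the canonical identification) the inclusion $\bigcap_k U_i(D(k))\hookrightarrow U_i(D(j))$; being a set-inclusion for every $i$, $\lambda_j$ is an inclusion in $\mathcal{C}$ by Definition~\ref{inclusion}, and the same works for any wide pullback. For the \emph{colimit} the key structural input is Proposition~\ref{is a partial order}: $\mathcal{C}^\mathcal{I}$ is a partial order, so $D$ is a thin diagram (at most one arrow between any two of its objects), and for a thin diagram of set-inclusions the equivalence relation defining the colimit never merges two distinct elements coming from the same $U_i(D(j))$; hence every $U_i\gamma_j$ is injective, so $\gamma_j$ is a monic in $\mathcal{C}$ (each $U_i$ reflects monics, Proposition~\ref{standard properties}). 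To upgrade this monic to an inclusion I would use the unique inclusion–epi factorisation of Proposition~\ref{is factorizable}, writing $\gamma_j=m\circ e$ with $m$ an inclusion and $e$ an epic: as a left factor of a monic, $e$ is monic, hence simultaneously monic and epic, and the strongly inclusive properties recalled after Proposition~\ref{is factorizable} (together with ``all isomorphisms are identities'') force $e$ to be an identity, so $\gamma_j=m$ is an inclusion. Alternatively one rebuilds $\bigvee D$ as an iterated pushout of inclusions and invokes ``pushouts of monics are monics'' (Proposition~\ref{standard properties}) at each stage.

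The step I expect to be the main obstacle is precisely this colimit half: colimits do not preserve monomorphisms in general — in $Set$ already the coequaliser of a parallel pair can spoil injectivity — so the thinness of $\mathcal{C}^\mathcal{I}$ is genuinely what rescues the argument, and one must also be a little careful with disconnected diagrams, where a colimit leg is an inclusion only after the customary identification of a coproduct with the disjoint union of its summands regarded as subsets. The limit half, by contrast, is routine, since preimages of set-inclusions are set-inclusions.
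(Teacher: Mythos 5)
Your argument follows essentially the same route as the paper's proof, which simply notes that limits and colimits of inclusions are inclusions in $Set$ and transfers this through the functors $U_i$ via preservation of (co)limits and reflection of monics; you merely flesh out the $Set$-level verification (thinness of the diagram for the colimit half, the epi--inclusion factorisation to promote the monic colimit legs to genuine inclusions) that the paper leaves implicit. The one delicate point is your appeal to ``monic and epic implies identity'' --- the listed strongly-inclusive property is ``\emph{inclusion} and epic implies identity'', and monic plus epic need not give an isomorphism in a general category --- but this is a gap that the paper's own two-line proof (which, read literally, only shows the legs are monics rather than inclusions) shares and does not address either.
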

 
 \begin{proof}
 In $Set$, it is known that limits and colimits of inclusions are inclusions. As each $U_i$ both preserves limits and colimits, and reflects monics, we can conclude that limits and colimits of inclusions in $\mathcal{C}$ are inclusions. 
 \end{proof}
 
 Since each $U_i$ reflects monics, $\mathcal{C}^\mathcal{I}$ has also $\emptyset$ as initial object. Conversely, $\mathcal{C}^\mathcal{I}$ has not necessarily a terminal object. The reason is that for most of morpholizable categories (anyway all the categories presented in this paper), the terminal object $\mathbb{1}$ has only one morphism to itself which is the identity $Id_{\mathbb{1}}$. This leads to the fact that $U_i(\mathbb{1})$ is a singleton, and then, as $U_i$ reflects epics, the unique morphism from any object $c$ to $\mathbb{1}$ is also an epic.  Hence, the category $\mathcal{C}^\mathcal{I}$ has limits but not necessarily colimits. By contrast, for any object $c \in |\mathcal{C}|$, the slice category $\mathcal{C}^\mathcal{I}/c$ has for initial object the inclusion $\emptyset \hookrightarrow c$ and for terminal object the identity $Id_c$. Moreover, by Proposition~\ref{inclusions of limits and colimits}, limits and colimits of inclusions are inclusions, and then we conclude that the slice category $\mathcal{C}^\mathcal{I}/c$ has limits and colimits. Obviously, there is the forgetful functor $U_c :  \mathcal{C}^\mathcal{I}/c \to \mathcal{C}$ which maps any inclusion $d \hookrightarrow c$ to $d$ and any inclusion $d \hookrightarrow d'$ 
in $\mathcal{C}^\mathcal{I}/c$ to itself. This forgetful functor leads to a full subcategory $\mathcal{C}^\mathcal{I}_c$ of $\mathcal{C}^\mathcal{I}$. This subcategory is both complete and co-complete. Hence, given an object $c \in \mathcal{C}$, $\mathcal{C}^\mathcal{I}_c$ is the category whose objects are all the subobjects and the morphisms are all the inclusions between them. 
  
 \begin{theorem}
 \label{complete lattice}
 The category $\mathcal{C}^\mathcal{I}_c$ is a morpho-category.
 \end{theorem}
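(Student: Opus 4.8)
The plan is to exhibit, for each $i \in I$, the composite $\overline{U_i} : \mathcal{C}^\mathcal{I}_c \to Set$ obtained by forgetting the inclusion into $c$ and then applying $U_i$, and to check that $(\mathcal{C}^\mathcal{I}_c,\overline{U_i})$ satisfies Definition~\ref{morpho-category}; the argument is uniform in $i$ (just as, for $Sub(G)$ on graphs, one may forget either on vertices or on edges). Three things must be verified: that $\mathcal{C}^\mathcal{I}_c$ is complete and co-complete, that its morphisms form a partial order $\rightarrowtail$, and that $\overline{U_i}$ preserves monics and colimits. Completeness and co-completeness was already recorded just above the statement (the category is, up to the identification of subobjects with inclusions into $c$, the slice $\mathcal{C}^\mathcal{I}/c$, and limits and colimits of inclusions are inclusions by Proposition~\ref{inclusions of limits and colimits}), so I would simply recall it.

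The partial-order condition is immediate: $\mathcal{C}^\mathcal{I}_c$ is a full subcategory of $\mathcal{C}^\mathcal{I}$, whose morphisms are the inclusions, and by Proposition~\ref{is a partial order} $\mathcal{I}$ is a partial order, hence so is its restriction to the subobjects of $c$; in particular there is at most one morphism between any two objects, which is what justifies the notation $\rightarrowtail$ (here the inclusions $\hookrightarrow$). Preservation of monics is equally immediate: every morphism of $\mathcal{C}^\mathcal{I}_c$ is an inclusion, so by Definition~\ref{inclusion} $\overline{U_i}$ sends it to a set-theoretic inclusion, which is injective, hence monic in $Set$ (this is also a special case of Proposition~\ref{standard properties}, item~1).

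The real content is preservation of colimits, and this is where the main obstacle lies: colimits in $\mathcal{C}^\mathcal{I}_c$ are \emph{not} the colimits computed in $\mathcal{C}$ — the latter only come with a generally non-monic map to $c$ — so one cannot simply invoke that each $U_i$ preserves colimits of $\mathcal{C}$. The plan is to describe colimits in $\mathcal{C}^\mathcal{I}_c$ explicitly. Given a diagram $\mathcal{D}$ with objects $(d_j \hookrightarrow c)_j$, form $L = \bigvee\mathcal{D}$ in $\mathcal{C}$; since all arrows of $\mathcal{D}$ are inclusions, the coprojections $\lambda_j : d_j \to L$ are inclusions by Proposition~\ref{inclusions of limits and colimits}, and the cocone $(d_j\hookrightarrow c)_j$ (compatible with $\mathcal{D}$ because $\mathcal{I}$ is a poset) induces a unique $\phi : L \to c$. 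By Proposition~\ref{is factorizable}, factor $\phi = \iota\circ\eta$ with $\eta$ epic and $\iota : \bigvee_{\mathcal{C}^\mathcal{I}_c}\mathcal{D} \hookrightarrow c$ an inclusion. A diagram chase — using uniqueness of the (epic, inclusion)-factorization, $\mathcal{I}$-right-cancellability (so that each $\eta\circ\lambda_j$, which satisfies $\iota\circ\eta\circ\lambda_j = (d_j\hookrightarrow c) \in \mathcal{I}$, is again an inclusion), and the universal property of $L$ in $\mathcal{C}$ — shows that $\bigvee_{\mathcal{C}^\mathcal{I}_c}\mathcal{D}$ equipped with the arrows $\eta\circ\lambda_j$ is the colimit of $\mathcal{D}$ in $\mathcal{C}^\mathcal{I}_c$.

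It remains to apply $U_i$ to this description. Since $U_i$ preserves colimits of $\mathcal{C}$ (Definition~\ref{morpholizable category}), $U_i(L)$ is the colimit of $U_i\mathcal{D}$ in $Set$; since $U_i$ preserves epics and inclusions (Proposition~\ref{standard properties}, item~1, and Definition~\ref{inclusion}), $U_i(\phi) = U_i(\iota)\circ U_i(\eta)$ is an (epic, inclusion)-factorization in $Set$; hence $U_i\big(\bigvee_{\mathcal{C}^\mathcal{I}_c}\mathcal{D}\big)$ is the image of $U_i(L)$ in $U_i(c)$ under $U_i(\phi)$, and since the $U_i(\lambda_j)$ are jointly surjective onto $U_i(L)$ this image is exactly $\bigcup_j U_i(d_j) \subseteq U_i(c)$. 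Thus $\overline{U_i}$ carries colimits in $\mathcal{C}^\mathcal{I}_c$ to unions of the underlying sets — precisely the form of ``preservation of colimits'' verified for presheaf toposes in Section~\ref{Ex:morpho-categories} — and therefore $(\mathcal{C}^\mathcal{I}_c,\overline{U_i})$ is a morpho-category.
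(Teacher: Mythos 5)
Your proposal is correct, and for most of the statement it runs parallel to the paper's proof: the partial order comes from Proposition~\ref{is a partial order}, monic preservation is automatic because every morphism of $\mathcal{C}^\mathcal{I}_c$ is an inclusion, and completeness and co-completeness are inherited from the discussion of $\mathcal{C}^\mathcal{I}/c$ via Proposition~\ref{inclusions of limits and colimits}. Where you genuinely diverge is on the preservation of colimits by the forgetful functor. The paper disposes of this in one line by citing Proposition~\ref{standard properties} (``each $U_i$ preserves monics and colimits''), which on its face concerns colimits computed in $\mathcal{C}$; you correctly point out that colimits in $\mathcal{C}^\mathcal{I}_c$ are not those of $\mathcal{C}$ (the $\mathcal{C}$-colimit of a family of subobjects of $c$ only maps to $c$ by a generally non-monic arrow), and you repair this by explicitly constructing $\bigvee_{\mathcal{C}^\mathcal{I}_c}\mathcal{D}$ as the image of that arrow via the (epic, inclusion)-factorization of Proposition~\ref{is factorizable}, using $\mathcal{I}$-right-cancellability to see that the induced coprojections are inclusions, and then pushing the whole construction through $U_i$ to land on the union of underlying sets. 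This buys a proof that actually justifies the claim the paper only asserts, at the cost of a longer argument and of one step you leave as a sketch (the ``diagram chase'' establishing the universal property of the image, which needs the diagonal fill-in associated with the factorization system). Note also that both you and the paper read ``preserves colimits'' as ``sends the supremum of subobjects to the union of underlying sets'' rather than to the categorical colimit in $Set$ (which for a discrete diagram would be a disjoint union); you are explicit that this matches the convention of Section~\ref{Ex:morpho-categories}, so your proof is faithful to the paper's intended meaning.
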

 
 \begin{proof}
The objects in $\mathcal{C}^\mathcal{I}_c$ are partially ordered by inclusions, and given two objects $d$ and $d'$, the supremum of $d$ and $d'$, denoted by $\bigvee \{d,d'\}$, is the coproduct of $d$ and $d'$ in $\mathcal{C}^\mathcal{I}_c$, and the infimum of $d$ and $d'$, denoted by $\bigwedge \{d,d'\}$, is the unique inclusion pullback of the inclusions $(d \hookrightarrow \bigvee \{d,d'\},d' \hookrightarrow \bigvee \{d,d'\})$. Hence, $\mathcal{C}^\mathcal{I}_c$ is a category finitely complete and finitely co-complete. Now, by Proposition~\ref{inclusions of limits and colimits}, this category is further complete and co-complete. Finally, by Proposition~\ref{standard properties}, each $U_i$ preserves monics and colimits.
 \end{proof}
 
 \begin{example}
In any presheaf category $Set^{\mathcal{B}^{op}}$, given a presheaf $F : \mathcal{B}^{op} \to Set$, the category $(Set^{\mathcal{B}^{op}})^\mathcal{I}_F$ has for objects all the presheaves $F' : \mathcal{B}^{op} \to Set$ such that for every $b \in \mathcal{B}$, $F'(b) \subseteq F(b)$. 
\end{example}

\subsection{Mathematical morphology over morpholizable categories}

Let $\mathcal{C}$ be a morpholizable category whose the family of faithful functors is $(U_i)_{i \in I}$. Let $c \in |\mathcal{C}|$ be an object. We have seen in the previous section that the category $\mathcal{C}^\mathcal{I}_c$ is a morpho-category. We can then define on it both operations of erosion and dilation based on structuring elements defined in Section~\ref{structuring}. In this framework, structuring elements are mappings $b : U_i(c) \to |\mathcal{C}^\mathcal{I}_c|$ for a given faithful functor $U_i$.

\medskip
Here, to facilite the presentation, we use the standard definition of sets, graphs and hypergraphs, rather than the one we followed in the previous section to define them by presheaves. 

 \begin{example}
\label{structuring element graph}
In the category of undirected graphs $\mathcal{G}$, if we consider the forgetful functor $U_V$, then given a graph $G = (V,E)$ the mapping $b$ can be defined for example as: for every vertex $x \in V$, $b(x) = G_x$ where $G_x = (V_x,E_x)$ is the subgraph of $G$ such that $V_x = \{y \mid \{x,y\} \in E\} \cup \{x\}$ and $E_x = \{\{x,y\} \in E \mid y \in V_x\}$. \\ Now, if we consider the forgetful functor $U_E$, then given a graph $G = (V,E)$, a possible mapping $b$ can be: for every edge $\{x,y\} \in E$, $b(\{x,y\}) = G_{\{x,y\}}$ where $G_{\{x,y\}} = (V_{\{x,y\}},E_{\{x,y\}})$ is the subgraph of $G$ such that $V_{\{x,y\}} = \{z \mid \{x,z\} \in E \; or \; \{y,z\} \in E\}$  and $E_{\{x,y\}} = \{\{x,z\} \in E \mid z \in V_{\{x,y\}}\} \cup \{\{y,z\}  \in E \mid z \in V_{\{x,y\}}\}$. 

\end{example}

\begin{example}
\label{structuring element hypergraph}
In the category of hypergraphs $\mathcal{H}$, if we consider the forgetful functor $U_V$, then given a hypergraph $H = (V,E = (E_i)_{i \in I})$, the mapping $b$ can be defined as: for every vertex $x \in V$, $b(x) = H_x$ with $H_x = (V_x,E_x)$ is the sub-hypergraph of $H$ where $V_x = \bigcup \{E_i \mid i \in I, x \in E_i\}$ and $E_x = (E_j)_{j \in J}$ such that $J = \{i \in I \mid x \in E_i\}$. \\ 
If the forgetful functor is $U_E$, then given a hypergraph $H = (V,E = (E_i)_{i \in I})$, a possible $b$ can be: for every hyperedge $E_i \in E$, $b(E_i) = H_{E_i}$ where $H_{E_i} = (V_{E_i},E_{E_i})$ is the sub-hypergraph of $H$ such that $V_{E_i} = \bigcup \{E_j \mid  E_i \cap E_j \neq \emptyset\}$ and $E_{E_i} = \{E_j \in E \mid E_j \subseteq V_{E_i}\} = \{E_j \in E \mid E_i \cap E_j \neq \emptyset\}$.  
\end{example}

\begin{example}
\label{structuring element simplicial complex}
In the category of simplicial complexes $\mathcal{K}$, given a simplicial complex $K = (V,(E_i)_{i \in I})$, we can define for every $x \in V$, $b(x) = K_x$ with $K_x = (V_x,E_x)$ the sub-simplicial complex  of $K$ such that $V_x = \bigcup \{E_i \mid i \in I, x \in E_i\}$ and $E_x = \{E_i \neq \emptyset \mid \exists j \in I, x \in E_j \; and \;  E_i \subseteq E_j\}$. 
\end{example}

Given a structuring element $b : U_i(c) \to |\mathcal{C}^\mathcal{I}_c|$, the erosion over $b$ is then the mapping $\varepsilon[b]$ from $\mathcal{C}^\mathcal{I}_c$ to  $\mathcal{C}^\mathcal{I}_c$ which for every object $d \in |\mathcal{C}^\mathcal{I}_c|$ yields $\varepsilon[b](d) = \bigvee D^\varepsilon_d$ where $D^\varepsilon_d = \{e \in |\mathcal{C}^\mathcal{I}_c| \mid \forall v \in U_i(e), b(v) \hookrightarrow d\}$. 

\begin{example}
In the category of graphs $\mathcal{G}$, if we forget on the set of vertices, given a graph $G = (V,E)$ and considering the structuring element defined in Example~\ref{structuring element graph}, for every subgraph $G' = (V',E')$ the erosion of $G'$ is $\varepsilon[b](G') = (\overline{V},\overline{E})$ where
$$\overline{V} = \{x \in V \mid G_x \hookrightarrow G'\}$$ 
$$\overline{E} = \{\{x,y\} \mid x,y \in \overline{V}\}$$ 

When we forget on edges, we have that $\varepsilon[b](G') = (\overline{V},\overline{E})$ where 
$$\overline{V} = \{x \in V \mid \exists y \in V, \{x,y\} \in E' \; and \; \forall y \in V, \{x,y\} \in E' \Rightarrow G_{\{x,y\}} \hookrightarrow G'\}$$
$$\overline{E} = \{\{x,y\}\in E'  \mid x,y \in \overline{V}\}$$ 
\end{example}

\begin{example}
In the category of hypergraphs $\mathcal{H}$, if we forget on vertices, given a hypergraph $H = (V,E = (E_i)_{i \in I})$ and considering the structuring element defined in Example~\ref{structuring element hypergraph}, for every sub-hypergraph $H' = (V',E')$, the erosion of $H'$ is $\varepsilon[b](H') = (\overline{V},\overline{E})$ where $$\overline{V} = \{x \in V \mid H_x \hookrightarrow H'\}$$ $$\overline{E} = \{E_i \mid i \in I \; and \;  E_i \subseteq \overline{V}\}$$

When we forget on hyperedges, we have that $\varepsilon[b](H') = (\overline{V},\overline{E})$ where 
$$\overline{V} = \{x \in V \mid \exists i \in I', x \in E'_i \; and \; \forall j \in I', x \in E'_j \Rightarrow H_{E'_j} \hookrightarrow H'\}$$ 
$$\overline{E} = \{E'_i \in E' \mid E'_i \subseteq \overline{V}\}$$
\end{example}

\begin{example}
In the category of simplicial complexes $\mathcal{K}$, given a simplicial complex $K = (V,E = (E_i)_{i \in I})$, for $b$ defined as in Example~\ref{structuring element simplicial complex},  for every sub-simplicial complex $K' = (V',E')$, the erosion $\varepsilon[b](K')$ of $K'$ is  defined as for hypergraphs when we forget on vertices. We find the definition of the operator $Cl^A$ given in~\cite{DCN11} where given a sub-simplicial complex $X$, $Cl^A(X)$ is the largest simplicial complex contained in $X$. 
\end{example}

In the same way, given a structuring element $b : U_i(c) \to |\mathcal{C}^\mathcal{I}_c|$, the dilation over $b$ is the mapping $\delta[b]$ from $\mathcal{C}^\mathcal{I}_c$ to  $\mathcal{C}^\mathcal{I}_c$ which to every object $d \in |\mathcal{C}^\mathcal{I}_c|$ yields $\delta[b](d) = \bigwedge D^\delta_d$ where $D^\delta_d = \{e \in |\mathcal{C}^\mathcal{I}_c| \mid \forall v \in U(d), b(v) \hookrightarrow e\}$. 

\begin{example}
In the category of graphs $\mathcal{G}$, if we forget on the set of vertices, given a graph $G = (V,E)$ and considering the structuring element defined in Example~\ref{structuring element graph}, for every subgraph $G' = (V',E')$ the dilation of $G'$ is $\delta[b](G') = (\overline{V},\overline{E})$ where 
$$\overline{V} = \{x \in V \mid V_x \cap V' \neq \emptyset\}$$
$$\overline{E} = \{\{x,y\} \in E \mid x,y \in \overline{V}\}$$

When we forget on edges, we have that $\delta[b](G') = (\overline{V},\overline{E})$ where 
$$\overline{V} = \{z \in V \mid \exists e \in E, S_e \cap V' \neq \emptyset \; and \; z \in V_e\}$$
$$\overline{E} = \{\{x,y\} \in E \mid x,y \in \overline{V}\}$$ 
Note that this dilation is not extensive: if $G'$ contains isolated vertices, these vertices do not belong to the dilated graph, which makes sense with this definition of $b$ and associated dilation, purely based on edges.

A simple example illustrates these definitions in Figure~\ref{fig:graph}.

\begin{figure}[htbp]
\centerline{\includegraphics[width=5cm]{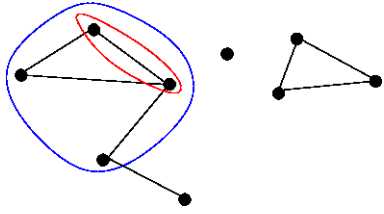}}
\caption{A simple example of a graph dilation. The graph $G$ is composed of a set of vertices $V$ represented by black dots, and a set of edges $E$ represented by black lines. The subgraph $G'=(V',E')$ has two vertices and one edge joining them (inside the red line). Its dilation $\delta[b](G')$ has 4 vertices and 4 edges, enclosed in the blue line. Now if $V'$ contains also the isolated vertex, then this vertex would belong to the dilation if we forget on vertices, but not if we forget on edges, illustrating the difference between both examples.}
\label{fig:graph}
\end{figure}

\end{example}

\begin{example}
\label{Ex:hypergraphs}
In the category of hypergraphs $\mathcal{H}$, if we forget on vertices, given a hypergraph $H = (V,E = (E_i)_{i \in I})$ and considering the structuring element defined in Example~\ref{structuring element hypergraph}, for every sub-hypergraph $H' = (V',E')$, the dilation of $H'$ is $\delta[b](H') = (\overline{V},\overline{E})$ where 
$$\overline{V} = \{x \in V \mid V_x \cap V' \neq \emptyset\}$$
$$\overline{E} = \{E_i  \in E \mid E_i \subseteq \overline{V}\}$$  This example corresponds to one of the dilations proposed in~\cite{BB13}. A simple illustration is given in Figure~\ref{fig:exCVIU}. 

When we forget on hyperedges, we have that $\delta[b](H') = (\overline{V},\overline{E})$ where 
$$\overline{V} = \{z \in V \mid \exists E_i \in E, E_i \cap V' \neq \emptyset \; and \; z \in E_i\}$$ $$\overline{E} = \{E_i \in E \mid E_i \subseteq \overline{V}\}$$ 

\begin{figure}[htbp]
\centerline{\includegraphics[width=10cm]{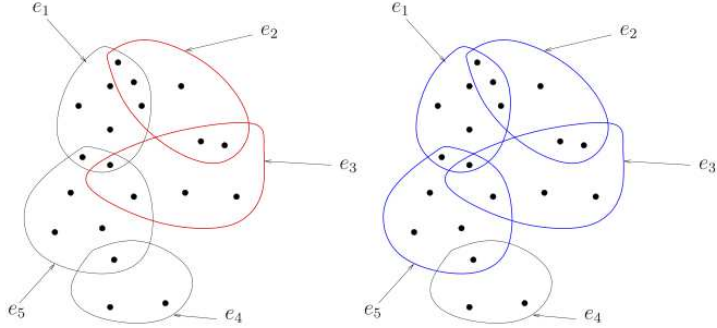}}
\caption{Illustration of the dilation of an hypergraph by forgetting on vertices. The figure on the left represents $V$ (vertices represented as points) and $E$ (hyperedges represented as closed lines). The red lines indicate the hyperedges of $H'$. The vertices of $H'$ are the points enclosed in these lines. The blue lines on the right represent the hyperedges of $\delta[b](H')$ and its vertices are the points enclosed in these lines. Figure from~\cite{BB13}.}
\label{fig:exCVIU}
\end{figure}

\end{example}


Since simplicial complexes define a subclass of hypergraphs, similar definitions as in Example~\ref{Ex:hypergraphs} for erosion and dilation can be given for this subclass.
\section{Application: morpho-logic}
\label{morpho-logic}

In~\cite{Bloch02}, it was shown that standard modalities $\Box$ and $\Diamond$ of the propositional modal logic can be defined as morphological erosion and dilation based on structuring elements defined over the  lattice $\mathcal{P}(\Omega)$ where $\Omega$ is a set of possible worlds. Here, we propose to extend this formalization to any morpho-category. To allow for reasoning, we must impose the constraints on morpho-categories to be Cartesian closed and covered by the structuring element.

\paragraph{Syntax.} Let $P$ be a countable set whose elements are called {\bf propositional variables} and denoted by letters $p,q,r,\ldots$ The set $\mathcal{F}$ of formulas is defined by the following grammar:
$$\varphi,\psi ::= \top | \bot | p | \neg \varphi | \varphi \wedge \psi | \varphi \vee \psi | \varphi \Rightarrow \psi | \Box \varphi | \Diamond \varphi$$
where $p$ runs through $P$. 

\paragraph{Semantics.} A {\bf model} $\mathcal{M}$ is a triple $(\mathcal{C},b,\nu)$ where:

\begin{itemize}
\item $\mathcal{C}$ is a morpho-category which is further Cartesian closed and covered by $b$,
\item $b : U(t) \to |\mathcal{C}|$ is a structuring element, and
\item $\nu : P \to |\mathcal{C}|$ is a mapping called {\bf valuation}.
\end{itemize}
The semantics of formulas in a model $\mathcal{M}$ is a mapping $\sem{\mathcal{M}}(\_) : \mathcal{F} \to |\mathcal{C}|$ defined by structural induction on formulas as follows:

\begin{itemize}
\item $\sem{\mathcal{M}}(\top) = t$ ($t$ is the terminal object of $\mathcal{C}$),
\item $\sem{\mathcal{M}}(\bot) = \emptyset$ ($\emptyset$ is the initial object of $\mathcal{C}$),
\item $\sem{\mathcal{M}}(p) = \nu(p)$,
\item $\sem{\mathcal{M}}(\neg \varphi) = \emptyset^{\sem{\mathcal{M}}(\varphi)}$ (i.e. the exponential of $\emptyset$ and $\sem{\mathcal{M}}(\varphi)$),
\item $\sem{\mathcal{M}}(\varphi \wedge \psi) = \sem{\mathcal{M}}(\varphi) \wedge \sem{\mathcal{M}}(\psi)$ (the infimum of $\sem{\mathcal{M}}(\varphi)$ and $\sem{\mathcal{M}}(\psi)$), 
\item $\sem{\mathcal{M}}(\varphi \vee \psi) = \sem{\mathcal{M}}(\varphi) \vee \sem{\mathcal{M}}(\psi)$ (the supremum of $\sem{\mathcal{M}}(\varphi)$ and $\sem{\mathcal{M}}(\psi)$),
\item $\sem{\mathcal{M}}(\varphi \Rightarrow \psi) = \sem{\mathcal{M}}(\psi)^{\sem{\mathcal{M}}(\varphi)}$ (the exponential of $\sem{\mathcal{M}}(\psi)$ and $\sem{\mathcal{M}}(\varphi)$),
\item $\sem{\mathcal{M}}(\Box \varphi) = \varepsilon[b](\sem{\mathcal{M}}(\varphi)$ (the erosion of $\sem{\mathcal{M}}(\varphi)$ with respect to $b$),
\item $\sem{\mathcal{M}}(\Diamond \varphi) = \delta[b](\sem{\mathcal{M}}(\varphi)$ (the dilation of $\sem{\mathcal{M}}(\varphi)$ with respect to $b$)
\end{itemize}
We write $\mathcal{M} \models \varphi$ if and only if $\sem{\mathcal{M}}(\varphi) \simeq t$. And then, standardly, given a set of formulas $\Gamma$ and a formula $\varphi$, we write $\Gamma \models \varphi$ to mean that for every model $\mathcal{M}$ which for every formula $\psi \in \Gamma$ verifies $\mathcal{M} \models \psi$, we have that $\mathcal{M} \models \varphi$.

\medskip
Here, a standard Kripke model $(Q,R,\nu)$ where $Q$ is a set of states, $R \subseteq Q \times Q$ is an accessibility relation, and $\nu : P \to \mathcal{P}(Q)$ is a mapping, is represented  by the model $\mathcal{M} = (\mathcal{P}(Q),b,\nu)$ where $b : Q \to \mathcal{P}(Q)$ is defined by: $\forall q \in Q, b(q) = \{q' \in Q \mid (q,q') \in R\}$.

\medskip
As is customary, our semantics brings back the implication to the order relation $\rightarrowtail$ in $\mathcal{C}$.

\begin{lemma}
\label{implication lemma}
$\mathcal{M} \models \varphi \Rightarrow \psi$ iff $\sem{\mathcal{M}}(\varphi) \rightarrowtail \sem{\mathcal{M}}(\psi)$.
\end{lemma}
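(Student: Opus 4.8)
The plan is to unfold both sides to the categorical order $\rightarrowtail$ and then use that $\mathcal{C}$ is Cartesian closed. Write $c = \sem{\mathcal{M}}(\varphi)$ and $d = \sem{\mathcal{M}}(\psi)$. By definition of $\models$ and the semantics clause for implication, $\mathcal{M} \models \varphi \Rightarrow \psi$ iff $\sem{\mathcal{M}}(\varphi \Rightarrow \psi) \simeq t$ iff $d^c \simeq t$, where $d^c$ is the exponential of $d$ and $c$ in $\mathcal{C}$ (which exists since $\mathcal{C}$ is Cartesian closed). So it suffices to show $d^c \simeq t$ iff $c \rightarrowtail d$.

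First I would handle the bookkeeping between ``$\simeq t$'' and ``$\rightarrowtail$''. Since $\mathcal{C}$ is a morpho-category, $Hom_\mathcal{C}(x,y)$ has at most one element for all $x,y$, so the isomorphism relation collapses to equality; moreover $x \rightarrowtail t$ always holds by terminality of $t$, hence by antisymmetry of $\rightarrowtail$ we get, for every object $x$, that $x \simeq t$ iff $t \rightarrowtail x$. Thus the goal reduces to: $t \rightarrowtail d^c \iff c \rightarrowtail d$.

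The key step is the universal property of the exponential recalled in the Preliminaries, read in the poset category $\mathcal{C}$: a morphism $e \to d^c$ exists iff a morphism $c \times e \to d$ exists, that is, $e \rightarrowtail d^c \iff c \wedge e \rightarrowtail d$ (one direction is the existence of the transpose; the other follows from the evaluation $d^c \wedge c \rightarrowtail d$ composed with monotonicity of the meet; this is exactly the adjunction underlying Remark~\ref{remark:expoential}). Instantiating with $e = t$ gives $t \rightarrowtail d^c \iff c \wedge t \rightarrowtail d$, and since $c \rightarrowtail t$ we have $c \wedge t = c$, whence $t \rightarrowtail d^c \iff c \rightarrowtail d$, i.e. $\sem{\mathcal{M}}(\varphi) \rightarrowtail \sem{\mathcal{M}}(\psi)$. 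There is no real obstacle here: the only point to be careful about is not to forget that in a morpho-category hom-sets are subsingletons, which is what legitimizes passing between $\simeq t$, $=t$ and $t \rightarrowtail (\cdot)$; the rest is a one-line application of Cartesian closedness, so the written proof should stay very short.
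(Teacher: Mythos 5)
Your proposal is correct and follows essentially the same route as the paper, which simply invokes the Heyting-algebra fact that $\sem{\mathcal{M}}(\varphi) \rightarrowtail \sem{\mathcal{M}}(\psi)$ iff $\sem{\mathcal{M}}(\psi)^{\sem{\mathcal{M}}(\varphi)} \simeq t$; you merely spell out that fact via the exponential adjunction instantiated at $e = t$. Nothing is missing.
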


\begin{proof}
Cartesian closed morpho-categories internally define Heyting algebras, and then they satisfy the following property: $$\sem{\mathcal{M}}(\varphi) \rightarrowtail \sem{\mathcal{M}}(\psi) \Longleftrightarrow \sem{\mathcal{M}}(\psi)^{\sem{\mathcal{M}}(\varphi)} \simeq t$$
\end{proof}

\medskip
By the constraint for $\mathcal{C}$ to be Cartesian closed, the logic defined here is intuitionistic. Hence, the propositional connectives $\wedge$ and $\vee$ and the modalities $\Box$ and $\Diamond$ are not dual. 

\paragraph{A sound and complete entailment system.} Here, we propose to establish the consequence relation $\vdash$, called {\bf proofs}, between set of formulas and formulas. We then consider the following Hilbert-system.

\begin{itemize}
\item {\bf Axioms:}
\begin{itemize}
\item {\em Tautologies:} all intuitionistic propositional tautology instances~\footnote{We call intuitionistic propositional tautology instance any formula $\varphi$ such that there exists an intuitionistic propositional tautology $\psi$ whose propositional variables are among $\{p_1,\ldots,p_n\}$ and $n$ formulas $\varphi_1,\ldots,\varphi_n$ such that $\varphi$ is obtained by replacing in $\psi$ all the occurrences of $p_i$ by $\varphi_i$ for $i \in \{1,\ldots,n\}$. It is not difficult to show that if $\psi$ is a propositional tautology, then for every model $\mathcal{M}$, $\sem{\mathcal{M}}(\varphi) \simeq t$.},
\item {Preservation:}
\begin{itemize}
\item $\Diamond \bot \Leftrightarrow \bot$
\item $\Box \top \Leftrightarrow \top$
\end{itemize}
\item {\em Commutativity:}
	\begin{itemize}
	\item $\Box(\varphi \wedge \psi) \Leftrightarrow \Box \varphi \wedge \Box \psi$
	\item $\Diamond(\varphi \vee \psi) \Leftrightarrow \Diamond \varphi \vee \Diamond \psi$
	\end{itemize}	
\item {\em Anti-extensivity:} 
\begin{itemize}
\item $\Box \varphi \Rightarrow \varphi$
\item $\Diamond \Box \varphi \Rightarrow \varphi$
\end{itemize}
\item {\em Extensivity:} 
\begin{itemize}
\item $\varphi \Rightarrow \Diamond(\varphi)$
\item $\varphi \Rightarrow \Box \Diamond \varphi$
\end{itemize}
\end{itemize}
\item {\bf Inference rules:}
\begin{itemize}
\item {\em Modus Ponens:} 
$$\frac{\varphi \Rightarrow \psi ~~~\varphi}{\psi}$$
\item {\em Necessity:} 
$$\frac{\varphi}{\Box \varphi}$$
\end{itemize}
\end{itemize}



Derivation is defined as usual, that is, given a formula $\varphi \in \mathcal{F}$, $\varphi$ is {\em derivable}, written $\vdash \varphi$, if $\varphi$ is one of the axioms, or follows from derivable formulas through applications of the inference rules. 

The proof of completeness that we present here follows Henkin's method~\cite{Henkin1949}. This method relies on the proof that every consistent set of formulas has a model. This relies on the deduction theorem which is known to fail for modal logics except under some conditions~\cite{HN12}. Here, we restrict the notion of derivation of a formula $\varphi$ from a set of formulas $\Gamma$, written $\Gamma \vdash \varphi$, to the notion of {\em local derivation} defined by: $\Gamma \vdash \varphi$ iff there exists a finite subset $\{\varphi_1,\ldots,\varphi_n\} \subseteq \Gamma$ such that $\vdash \varphi_1 \wedge \ldots \wedge \varphi_n \Rightarrow \varphi$.   

\begin{theorem}
The proof system defined above is sound, i.e. if $\Gamma \vdash \varphi$, then $\Gamma \models \varphi$.
\end{theorem}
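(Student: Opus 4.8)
The plan is to argue by induction on derivations. First I would prove the statement for the context-free judgement: if $\vdash \varphi$, then $\sem{\mathcal{M}}(\varphi) \simeq t$ in every model $\mathcal{M} = (\mathcal{C},b,\nu)$ (recall that in a morpho-category $\simeq$ coincides with $=$, since $\rightarrowtail$ is antisymmetric, so throughout ``$\simeq t$'' may be read as ``$= t$''). The induction has one case per axiom schema and one per inference rule. Once this is established, I would settle $\Gamma \vdash \varphi$ by unfolding the definition of local derivation: there is a finite $\{\varphi_1,\dots,\varphi_n\} \subseteq \Gamma$ with $\vdash \varphi_1 \wedge \dots \wedge \varphi_n \Rightarrow \varphi$; if $\mathcal{M} \models \varphi_i$ for all $i$, then $\sem{\mathcal{M}}(\varphi_1 \wedge \dots \wedge \varphi_n) = t \wedge \dots \wedge t \simeq t$ (the infimum of terminal objects is terminal), so by the already-proved context-free case together with Lemma~\ref{implication lemma} we get $t \rightarrowtail \sem{\mathcal{M}}(\varphi)$, whence $\sem{\mathcal{M}}(\varphi) \simeq t$, i.e. $\mathcal{M} \models \varphi$; since $\mathcal{M}$ is arbitrary, $\Gamma \models \varphi$.

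For the axioms, each schema reduces to one of the algebraic facts of Section~\ref{structuring}, turned into ``$\sem{\mathcal{M}}(\chi)\simeq t$'' via Lemma~\ref{implication lemma} (and, for a biconditional $\chi_1 \Leftrightarrow \chi_2$, which abbreviates $(\chi_1 \Rightarrow \chi_2) \wedge (\chi_2 \Rightarrow \chi_1)$, by applying the lemma in both directions and using antisymmetry of $\rightarrowtail$). Concretely: the intuitionistic propositional tautology instances are valid because a Cartesian closed morpho-category internally defines a Heyting algebra and the semantic clauses for $\top,\bot,\wedge,\vee,\Rightarrow,\neg$ are exactly the Heyting operations, so any substitution instance of a propositional tautology evaluates to $t$ (this is the footnote attached to that axiom); $\Diamond\bot\Leftrightarrow\bot$ and $\Box\top\Leftrightarrow\top$ are the preservation properties $\delta[b](\emptyset)=\emptyset$ and $\varepsilon[b](t)=t$ of Theorem~\ref{th:main results}; $\Box(\varphi\wedge\psi)\Leftrightarrow\Box\varphi\wedge\Box\psi$ and $\Diamond(\varphi\vee\psi)\Leftrightarrow\Diamond\varphi\vee\Diamond\psi$ are the commutativity of $\varepsilon[b]$ with $\bigwedge$ and of $\delta[b]$ with $\bigvee$ (Theorem~\ref{th:main results}) applied to a two-object diagram; $\Box\varphi\Rightarrow\varphi$ and $\varphi\Rightarrow\Diamond\varphi$ are the anti-extensivity of $\varepsilon[b]$ and extensivity of $\delta[b]$ (Theorem~\ref{extensivity}), which apply because every model requires $\mathcal{C}$ to be covered by $b$; and $\Diamond\Box\varphi\Rightarrow\varphi$, $\varphi\Rightarrow\Box\Diamond\varphi$ are the anti-extensivity of the opening $\delta[b]\circ\varepsilon[b]$ and the extensivity of the closing $\varepsilon[b]\circ\delta[b]$.

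For the inference rules: if $\sem{\mathcal{M}}(\varphi\Rightarrow\psi)\simeq t$ and $\sem{\mathcal{M}}(\varphi)\simeq t$, then Lemma~\ref{implication lemma} gives $\sem{\mathcal{M}}(\varphi)\rightarrowtail\sem{\mathcal{M}}(\psi)$, hence $t\rightarrowtail\sem{\mathcal{M}}(\psi)$, and together with the canonical arrow $\sem{\mathcal{M}}(\psi)\rightarrowtail t$ antisymmetry yields $\sem{\mathcal{M}}(\psi)\simeq t$; this disposes of Modus Ponens. For Necessity, if $\sem{\mathcal{M}}(\varphi)\simeq t$ then $\sem{\mathcal{M}}(\Box\varphi)=\varepsilon[b](\sem{\mathcal{M}}(\varphi))=\varepsilon[b](t)=t$ by the preservation property of erosion. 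It is essential here that Necessity is applied only to theorems (the context-free judgement), which is precisely why soundness survives even though the full deduction theorem fails, and why the restricted (local) notion of derivation is the right one to work with.

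The routine part is the unfolding of the semantic clauses and of Lemma~\ref{implication lemma}; the step that needs genuine care is the tautology-instance axiom, where one must be sure that assuming $\mathcal{C}$ merely Cartesian closed (rather than Boolean) already forces the interpretation to factor through a Heyting algebra and to be stable under substitution of formulas for propositional variables. Relatedly, I would write out carefully why the restriction to local derivations makes the reduction from $\Gamma\vdash\varphi$ to the context-free case legitimate, since the whole argument hinges on that reduction together with the implication lemma.
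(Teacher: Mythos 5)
Your proposal is correct and takes essentially the same route as the paper's proof: first establish the context-free case $\vdash\varphi \Longrightarrow\ \models\varphi$ by checking each axiom schema against the algebraic results of Section~\ref{sect:results} (via Lemma~\ref{implication lemma}) and each inference rule, then reduce $\Gamma\vdash\varphi$ to that case by unfolding the definition of local derivation. The paper's version is merely terser, declaring the axiom-by-axiom verification ``obvious,'' whereas you spell out exactly which semantic property (preservation, commutativity, extensivity of closing, anti-extensivity of opening, the covering hypothesis, the Heyting structure for tautology instances) backs each axiom.
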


\begin{proof}
First, let us prove that $\vdash \varphi \Longrightarrow \models \varphi$. The soundness of axioms is obvious by Lemma~\ref{implication lemma} and the results proved in Section~\ref{sect:results}. 

Soundness of Modus Ponens is obvious, and soundness of the Necessity rule and Monotony rule are direct consequences of Theorem~\ref{th:main results}.

To finish the proof, let us suppose that $\Gamma \vdash \varphi$. Let $\mathcal{M}$ be a model of $\Gamma$. By definition, $\Gamma \vdash \varphi$ means that there exists $\{\varphi_1,\ldots,\varphi_n\} \subseteq \Gamma$ such that $\varphi_1 \wedge \ldots \wedge \varphi_n \Rightarrow \varphi$. By the previous soundness result, we have that $\mathcal{M} \models \varphi_1 \wedge \ldots \wedge \varphi_n \Rightarrow \varphi$. As $\{\varphi_1,\ldots,\varphi_n\} \subseteq \Gamma$, we have that $\sem{\mathcal{M}}(\varphi_1 \wedge \ldots \wedge \varphi_n) \approx t$, and the by Lemma~\ref{implication lemma}, $\sem{\mathcal{M}}(\varphi) \approx t$ (i.e. $\mathcal{M} \models \varphi$). 
\end{proof}

Thanks to this definition of local derivability, we get the deduction theorem.

\begin{proposition}[Deduction theorem]
Let $\Gamma \subseteq \mathcal{F}$ be a set of assumptions. Then, we have $\Gamma \cup \{\varphi\} \vdash \psi$ if, and only if $\Gamma \vdash \varphi \Rightarrow \psi$.
\end{proposition}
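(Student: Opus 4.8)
The proof rests entirely on the \emph{local} character of the derivability relation introduced just above the statement: $\Gamma \vdash \psi$ means that $\vdash \chi \Rightarrow \psi$, where $\chi$ is the conjunction of some finite subset $\{\varphi_1,\dots,\varphi_n\} \subseteq \Gamma$. Consequently both directions reduce to purely propositional reasoning about a \emph{single} implication, carried out with the intuitionistic-tautology axioms and Modus Ponens; the Necessity rule is never invoked, since it only ever acts inside the bare theoremhood relation $\vdash$ and never on hypotheses. This is precisely why the theorem holds here, in contrast to the ``global'' deduction theorem, which fails for modal logics.

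For the implication ($\Rightarrow$): I would assume $\Gamma \cup \{\varphi\} \vdash \psi$, so $\vdash \chi \Rightarrow \psi$ for the conjunction $\chi$ of a finite $F \subseteq \Gamma \cup \{\varphi\}$. If $\varphi \notin F$, then $F \subseteq \Gamma$, and I conclude $\Gamma \vdash \varphi \Rightarrow \psi$ from $\vdash \chi \Rightarrow \psi$ via the weakening tautology instance $(\chi \Rightarrow \psi) \Rightarrow (\chi \Rightarrow (\varphi \Rightarrow \psi))$ and Modus Ponens. If $\varphi \in F$, I rewrite $\chi$, up to an intuitionistically provable reordering of $\wedge$, as $\chi' \wedge \varphi$ with $\chi'$ the conjunction of $F \setminus \{\varphi\} \subseteq \Gamma$ (taking $\chi' = \top$ if $F = \{\varphi\}$); then from $\vdash (\chi' \wedge \varphi) \Rightarrow \psi$ the currying tautology instance $((\chi' \wedge \varphi) \Rightarrow \psi) \Rightarrow (\chi' \Rightarrow (\varphi \Rightarrow \psi))$ and Modus Ponens give $\vdash \chi' \Rightarrow (\varphi \Rightarrow \psi)$, i.e. $\Gamma \vdash \varphi \Rightarrow \psi$.

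For the converse ($\Leftarrow$): I would assume $\Gamma \vdash \varphi \Rightarrow \psi$, so $\vdash \chi \Rightarrow (\varphi \Rightarrow \psi)$ for the conjunction $\chi$ of a finite $F \subseteq \Gamma$. Taking the finite subset $F \cup \{\varphi\} \subseteq \Gamma \cup \{\varphi\}$, whose conjunction is intuitionistically equivalent to $\chi \wedge \varphi$, the uncurrying tautology instance $(\chi \Rightarrow (\varphi \Rightarrow \psi)) \Rightarrow ((\chi \wedge \varphi) \Rightarrow \psi)$ together with Modus Ponens yields $\vdash (\chi \wedge \varphi) \Rightarrow \psi$, hence $\Gamma \cup \{\varphi\} \vdash \psi$.

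The only delicate points are bookkeeping ones: allowing the empty finite subset (with the empty conjunction read as $\top$), and silently using that conjunction is associative, commutative and idempotent up to intuitionistic provability, so that ``the conjunction of a finite set of formulas'' is well defined up to $\vdash$-equivalence. I do not expect any genuine obstacle; the substance of the result lies in the \emph{definition} of local derivability rather than in its proof.
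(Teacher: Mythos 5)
Your proposal is correct and follows essentially the same route as the paper: both directions are reduced, via the definition of local derivability, to a single propositional implication, with a case split on whether $\varphi$ occurs in the witnessing finite subset and the currying/exportation tautology doing the work. Your version is simply more careful about the bookkeeping (empty conjunctions, reordering of $\wedge$) that the paper's proof leaves implicit.
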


\begin{proof}
The necessary condition is obvious, and can be easily obtained by Modus Ponens and monotonicity (if $\Gamma \vdash \varphi$ and $\Gamma \subseteq \Gamma'$, then $\Gamma' \vdash \varphi$ - see Proposition~\ref{prop:chellas} below). 

Let us prove the sufficient condition. Let us suppose that $\Gamma \cup \{\varphi\} \vdash \psi$. This means that there exists $\{\varphi_1,\ldots,\varphi_n\} \subseteq \Gamma$ such that $\vdash \varphi_1 \wedge \ldots \wedge \varphi_n \Rightarrow \psi$. Two cases have to be considered:
\begin{enumerate}
    \item there exists $i$, $1 \leq i \leq n$ such that $\varphi = \varphi_i$. The exportation rule $p \wedge q \Rightarrow r \Leftrightarrow p \Rightarrow q \Rightarrow r$ is a valid rule in propositional logic. Therefore, we can write that $\vdash \varphi_1 \ldots,\varphi_n \Rightarrow \varphi_i \Rightarrow \psi$, i.e. $\Gamma \vdash \varphi \Rightarrow \psi$. 
    \item For every $i$, $1 \leq i \leq n$, $\varphi \neq \varphi_i$. We can then write that $\vdash \varphi_1 \wedge \ldots \wedge \varphi_n \wedge \varphi \Rightarrow \psi$. By the exportation rule, we can also write $\vdash \varphi_1 \wedge \ldots \wedge \varphi_n \Rightarrow \varphi \Rightarrow \psi$, that is $\Gamma \vdash \varphi \Rightarrow \psi$. 
\end{enumerate}
\end{proof}

\begin{corollary}
For every $\Gamma \subseteq \mathcal{F}$ and every $\varphi \in \mathcal{F}$, $\Gamma \vdash \varphi$ if and only if $\Gamma \cup \{\neg \varphi\} \vdash \bot$.
\end{corollary}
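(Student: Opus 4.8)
The plan is to reduce both directions to the deduction theorem just established, together with elementary reasoning in the intuitionistic propositional calculus (available since every intuitionistic propositional tautology instance is an axiom) and the monotonicity of $\vdash$ (Proposition~\ref{prop:chellas}). Throughout I use local derivability: $\Delta \vdash \chi$ means that $\vdash \chi_1 \wedge \dots \wedge \chi_n \Rightarrow \chi$ for some finite $\{\chi_1,\dots,\chi_n\} \subseteq \Delta$.

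For the forward implication, assume $\Gamma \vdash \varphi$. By monotonicity $\Gamma \cup \{\neg\varphi\} \vdash \varphi$, and trivially $\Gamma \cup \{\neg\varphi\} \vdash \neg\varphi$, so $\Gamma \cup \{\neg\varphi\}$ locally derives $\varphi \wedge \neg\varphi$. Since $\varphi \wedge \neg\varphi \Rightarrow \bot$ is an intuitionistic propositional tautology instance, one application of Modus Ponens yields $\Gamma \cup \{\neg\varphi\} \vdash \bot$. This direction is unproblematic and uses only intuitionistically valid material.

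For the backward implication, assume $\Gamma \cup \{\neg\varphi\} \vdash \bot$. By the deduction theorem $\Gamma \vdash \neg\varphi \Rightarrow \bot$, and since $(\neg\varphi \Rightarrow \bot) \Rightarrow \neg\neg\varphi$ is an intuitionistic propositional tautology instance, Modus Ponens gives $\Gamma \vdash \neg\neg\varphi$. What remains is the single step $\Gamma \vdash \neg\neg\varphi \implies \Gamma \vdash \varphi$, i.e. double-negation elimination.

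This last step is the main obstacle, and it is genuinely so: $\neg\neg\varphi \Rightarrow \varphi$ is not an intuitionistic propositional tautology instance, so in the general (intuitionistic) morpho-logic the backward implication is not available by the axioms above — indeed it already fails for $\Gamma = \emptyset$ and $\varphi = p \vee \neg p$, since $\neg(p \vee \neg p) \vdash \bot$ intuitionistically while $\not\vdash p \vee \neg p$. The corollary thus holds as stated exactly when the propositional base is classical, that is, when the morpho-categories carrying the models are Boolean, in which case $\neg\neg\varphi \Rightarrow \varphi$ is a tautology instance and the argument above closes. In the genuinely intuitionistic setting the statement that survives (and that is in fact what the subsequent Henkin-style completeness argument needs) is the variant $\Gamma \vdash \neg\varphi$ if and only if $\Gamma \cup \{\varphi\} \vdash \bot$, whose non-trivial direction uses only the intuitionistically valid implication $(\varphi \Rightarrow \bot) \Rightarrow \neg\varphi$ together with the deduction theorem, exactly as in the forward implication above.
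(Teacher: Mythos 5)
Your forward direction is fine and matches what the paper dismisses as ``obvious.'' For the backward direction, your diagnosis is exactly right, and it in fact exposes a flaw in the paper's own argument rather than a gap in yours. The paper's proof extracts a finite $\{\varphi_1,\ldots,\varphi_n\} \subseteq \Gamma \cup \{\neg\varphi\}$ with $\vdash \varphi_1 \wedge \ldots \wedge \varphi_n \Rightarrow \bot$, weakens $\bot$ to $\varphi$ to obtain $\Gamma \cup \{\neg\varphi\} \vdash \varphi$, applies the deduction theorem to get $\Gamma \vdash \neg\varphi \Rightarrow \varphi$, and then closes by asserting that $(\neg\varphi \Rightarrow \varphi) \Rightarrow \varphi$ ``is a tautology'' and applying Modus Ponens. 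But $(\neg\varphi \Rightarrow \varphi) \Rightarrow \varphi$ (consequentia mirabilis) is a classical tautology, not an intuitionistic one --- over the intuitionistic base it is interderivable with the double-negation elimination $\neg\neg\varphi \Rightarrow \varphi$ that you isolate as the obstruction --- and the axiom schema of this system admits only intuitionistic propositional tautology instances. So the paper's final step is not licensed by its own axioms; your route (deduction theorem plus $(\neg\varphi \Rightarrow \bot) \Rightarrow \neg\neg\varphi$, then stuck at $\neg\neg\varphi \Rightarrow \varphi$) and the paper's route stall at the same classical principle, just packaged differently.

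Your counterexample is also valid: $\neg(p \vee \neg p) \Rightarrow \bot$ is an intuitionistic tautology instance (since $\neg\neg(p \vee \neg p)$ is intuitionistically valid), so $\{\neg(p \vee \neg p)\} \vdash \bot$ by the definition of local derivability, whereas $\vdash p \vee \neg p$ would contradict the soundness theorem in any non-Boolean model (e.g.\ a valuation into the subobject Heyting algebra of a graph with $\nu(p) \vee \overline{\nu(p)} \neq t$). Hence the corollary as stated holds only in the Boolean specialization discussed at the end of Section~\ref{morpho-logic} (where all classical tautology instances are admitted as axioms), and in the genuinely intuitionistic setting it should be replaced by the variant you propose, $\Gamma \vdash \neg\varphi$ if and only if $\Gamma \cup \{\varphi\} \vdash \bot$, which is what a Henkin-style argument can legitimately use. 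Your proposal is therefore not a proof of the stated corollary, but it correctly identifies that no such proof exists from the given axioms, and it locates precisely where the paper's proof oversteps them.
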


\begin{proof}
The "$\Rightarrow$" part is obvious. Let us prove the "$\Leftarrow$" part. Let us suppose that $\Gamma \cup \{\neg \varphi\} \vdash \bot$. This means that there is a finite subset $\{\varphi_1,\ldots,\varphi_n\} \subseteq \Gamma \cup \{\neg \varphi\}$ such that $\varphi_1 \wedge \ldots \wedge \varphi_n \Rightarrow \bot$, and then by the propositional logic, we have that $\varphi_1 \wedge \ldots \wedge \varphi_n \Rightarrow \varphi$ (i.e. $\Gamma \cup \{\neg \varphi\} \vdash \varphi$). By the deduction theorem, we can then write $\Gamma \vdash \neg \varphi \Rightarrow \varphi$. The formula $(\neg \varphi \Rightarrow \varphi) \Rightarrow \varphi$ is a tautology, and then by Modus Ponens we have that $\Gamma \vdash \varphi$. 
\end{proof}

We also find the other standard properties associated to theoremhood, deducibility and consistency that we summarize in the following proposition.

\begin{proposition}
\label{prop:chellas}
\mbox{}

\begin{enumerate}
    \item $\vdash \varphi$ iff for every $\Gamma \subseteq \mathcal{F}$, $\Gamma \vdash \varphi$.
    \item if $\Gamma \vdash \Gamma'$ and $\Gamma' \vdash \varphi$, then $\Gamma \vdash \varphi$.
    \item if $\varphi \in \Gamma$, then $\Gamma \vdash \varphi$.
    \item if $\Gamma \vdash \varphi$ and $\Gamma \subseteq \Delta$, then $\Delta \vdash \varphi$.
    \item $\Gamma \vdash \varphi$ iff there exists a finite subset $\Delta$ of $\Gamma$ such that $\Delta \vdash \varphi$.
    \item if $\Gamma$ is consistent, then there exists a formula $\varphi \in \mathcal{F}$ such that not $\Gamma \vdash \varphi$.
    \item $\Gamma$ is consistent iff there is no $\varphi \in \mathcal{F}$ such that both $\Gamma \vdash \varphi$ and $\Gamma \vdash \neg \varphi$.
    \item $\Gamma \cup \{\varphi\}$ is consistent iff not $\Gamma \vdash \neg \varphi$.
\end{enumerate}
\end{proposition}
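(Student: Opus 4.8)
All eight items unfold directly from the definition of local derivability together with the Deduction Theorem, the Corollary just established ($\Gamma \vdash \varphi$ iff $\Gamma \cup \{\neg\varphi\} \vdash \bot$), and a handful of intuitionistically valid propositional facts: $\vdash \varphi \Rightarrow \varphi$, $\vdash \top$, $\vdash \bot \Rightarrow \varphi$, $\vdash (\varphi \wedge \neg\varphi) \Rightarrow \bot$, $\vdash \varphi \Rightarrow (\psi \Rightarrow \varphi)$, weakening $\vdash \bigwedge \Delta' \Rightarrow \bigwedge \Delta$ whenever $\Delta \subseteq \Delta'$, and transitivity of $\Rightarrow$ (each discharged by Modus Ponens on a tautology instance). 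Throughout I read ``$\Gamma$ consistent'' as $\Gamma \not\vdash \bot$, ``$\Gamma \vdash \Gamma'$'' as ``$\Gamma \vdash \psi$ for every $\psi \in \Gamma'$'', and I use the convention that an empty conjunction is $\top$, so that $\emptyset \vdash \varphi$ holds iff $\vdash \varphi$.

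The bookkeeping items (1), (3), (4), (5) I would settle directly. For (3), if $\varphi \in \Gamma$ take the witnessing finite subset $\{\varphi\}$ and the tautology $\vdash \varphi \Rightarrow \varphi$. For (4), a finite $\Delta \subseteq \Gamma$ witnessing $\Gamma \vdash \varphi$ is still a finite subset of any $\Delta' \supseteq \Gamma$, so $\Delta' \vdash \varphi$; then (5) is immediate ($\Rightarrow$ by taking the witnessing subset, $\Leftarrow$ by (4)). For (1), from $\vdash \varphi$ and $\vdash \varphi \Rightarrow (\top \Rightarrow \varphi)$ we get $\vdash \top \Rightarrow \varphi$, which is exactly the empty-subset witness, so $\Gamma \vdash \varphi$ for every $\Gamma$; conversely, instantiating the hypothesis at $\Gamma = \emptyset$ gives $\vdash \top \Rightarrow \varphi$, and Modus Ponens with $\vdash \top$ yields $\vdash \varphi$.

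For the cut item (2), from $\Gamma' \vdash \varphi$ pick a finite $\{\psi_1, \ldots, \psi_m\} \subseteq \Gamma'$ with $\vdash \psi_1 \wedge \cdots \wedge \psi_m \Rightarrow \varphi$, and for each $j$ pick a finite $\Delta_j \subseteq \Gamma$ with $\vdash \bigwedge \Delta_j \Rightarrow \psi_j$. Then $\Delta = \Delta_1 \cup \cdots \cup \Delta_m$ is a finite subset of $\Gamma$; by weakening, $\vdash \bigwedge \Delta \Rightarrow \psi_j$ for each $j$, hence $\vdash \bigwedge \Delta \Rightarrow \psi_1 \wedge \cdots \wedge \psi_m$, and transitivity of implication gives $\vdash \bigwedge \Delta \Rightarrow \varphi$, i.e. $\Gamma \vdash \varphi$. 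The only care needed here is amalgamating the several finite witnessing subsets into one finite subset of $\Gamma$ --- which is the main (and quite mild) technical point of the whole proposition.

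For the consistency items: (6) is immediate, since $\Gamma \not\vdash \bot$ exhibits $\bot$ as an unprovable formula. For (7), if both $\Gamma \vdash \varphi$ and $\Gamma \vdash \neg\varphi$ then, combining witnessing finite subsets as in (2) and using $\vdash (\varphi \wedge \neg\varphi) \Rightarrow \bot$, we obtain $\Gamma \vdash \bot$, so consistency rules this out; conversely, if $\Gamma \vdash \bot$ then $\vdash \bot \Rightarrow \psi$ gives $\Gamma \vdash \psi$ for every $\psi$, in particular $\Gamma \vdash p$ and $\Gamma \vdash \neg p$ for a propositional variable $p$, contradicting the right-hand side. For (8), $\Gamma \cup \{\varphi\}$ inconsistent means $\Gamma \cup \{\varphi\} \vdash \bot$, which by the Deduction Theorem is $\Gamma \vdash \varphi \Rightarrow \bot$, i.e. $\Gamma \vdash \neg\varphi$; conversely $\Gamma \vdash \neg\varphi$ together with $\Gamma \cup \{\varphi\} \vdash \varphi$ (item (3)) and monotonicity (item (4)) yields, via $\vdash (\varphi \wedge \neg\varphi) \Rightarrow \bot$, that $\Gamma \cup \{\varphi\} \vdash \bot$. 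I do not anticipate any real obstacle: every item is an instance of the Deduction Theorem, its Corollary, monotonicity, or a propositional tautology applied by Modus Ponens, and the only vigilance required is over the conventions fixed above and the finite-subset amalgamation in (2) and (7).
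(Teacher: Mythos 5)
Your proof is correct. The paper itself does not prove the proposition at all: it simply defers to Theorem 2.16 of Chellas's textbook. Your blind write-up is therefore a genuinely different (and more self-contained) route: you verify all eight items directly from the paper's own definition of local derivability, the Deduction Theorem, and its Corollary, using only intuitionistically valid tautology instances such as $\bot \Rightarrow \psi$, $(\varphi \wedge \neg\varphi) \Rightarrow \bot$, and $\neg\varphi \Leftrightarrow (\varphi \Rightarrow \bot)$. This buys two things the bare citation does not: (i) Chellas works over a classical propositional base with his own notion of deducibility, so importing his Theorem 2.16 verbatim tacitly requires checking that the argument survives in the intuitionistic setting of this paper --- your proof makes that check explicit; and (ii) you pin down the conventions the paper leaves implicit (the empty-conjunction reading of $\emptyset \vdash \varphi$, and the amalgamation of finitely many witnessing subsets in items (2) and (7)), which are precisely the places where the definition of local derivability could otherwise cause trouble. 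The only step you gloss over is the identification of $\neg\varphi$ with $\varphi \Rightarrow \bot$ in item (8), but since that equivalence is an intuitionistic tautology instance and hence an axiom of the system, the gap is cosmetic.
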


\begin{proof}
See the proof of Theorem 2.16 in \cite{Chellas80}.
\end{proof}

For example, from the inference system and all the properties associated with it, we can prove the following statements:

\begin{itemize}
    \item {\em Monotony:}
    \begin{itemize}
        \item $(\varphi \Rightarrow \psi) \Rightarrow (\Box \varphi \Rightarrow \Box \psi)$
        \item $(\varphi \Rightarrow \psi) \Rightarrow (\Diamond \varphi \Rightarrow \Diamond \psi)$
    \end{itemize}
    \item {\em Adjunction:} $(\varphi \Rightarrow \Box \psi) \Leftrightarrow (\Diamond \varphi \Rightarrow \psi)$
    \item {\em Kripke Schema:} $\Box(\varphi \Rightarrow \psi) \Rightarrow (\Box \varphi \Rightarrow \Box \psi)$
\end{itemize}

For instance the adjunction property is proved from the following two sequences:

\begin{itemize}
    \item $\varphi \Rightarrow \Box \psi \vdash \varphi \Rightarrow \Box \psi$
    \item $\varphi \Rightarrow \Box \psi \vdash (\varphi \Rightarrow \Box \psi) \Rightarrow (\Diamond \varphi \Rightarrow \Diamond \Box \psi)$ (Commutativity of $\Diamond$)
    \item $\varphi \Rightarrow \Box \psi \vdash \Diamond \varphi \Rightarrow \Diamond \Box \psi$ (Modus Ponens)
    \item $\varphi \Rightarrow \Box \psi \vdash \Diamond \Box \psi \Rightarrow \psi$ (anti-extensivity of $\Diamond \Box$)
    \item $\varphi \Rightarrow \Box \psi \vdash \Diamond \varphi \Rightarrow \psi$ (transitivity of implication)
    \item $\vdash (\varphi \Rightarrow \Box \psi) \Rightarrow (\Diamond \varphi \Rightarrow \psi)$ (Deduction theorem)
\end{itemize}

Similarly, we have 

\begin{itemize}
    \item $\Diamond \varphi \Rightarrow \psi \vdash \Diamond \varphi \Rightarrow \psi$
    \item $\Diamond \varphi \Rightarrow \psi \vdash (\Diamond \varphi \Rightarrow \psi) \Rightarrow (\Box \Diamond \varphi \Rightarrow \Box \psi)$ (Commutativity of $\Box$)
    \item $\Diamond \varphi \Rightarrow \psi \vdash \Box \Diamond \varphi \Rightarrow \Box \psi$ (Modus Ponens)
    \item $\Diamond \varphi \Rightarrow \psi \vdash \varphi \Rightarrow \Box \Diamond \varphi$ (extensivity of $\Box \Diamond$)
    \item $\Diamond \varphi \Rightarrow \psi \vdash \varphi \Rightarrow \Box \psi$ (transitivity of implication)
    \item $\vdash (\Diamond \varphi \Rightarrow \psi) \Rightarrow (\varphi \Rightarrow \Box \psi)$ (Deduction theorem)
\end{itemize}

\begin{definition}[Maximal Consistence]
A set of formulas $\Gamma \subseteq \mathcal{F}$ is {\bf maximally consistent} if it is consistent (i.e. we have not $\Gamma \vdash \bot$) and there is no consistent set of formulas properly containing $\Gamma$ (i.e. for each formula $\varphi \in \mathcal{F}$, either $\varphi \in \Gamma$ or $\neg \varphi \in \Gamma$, but not both).
\end{definition}

\begin{proposition}
\label{lindenbaum lemma}
Let $\Gamma \subseteq \mathcal{F}$ be a consistent set of formulas. There exists a maximally consistent set of formulas $\overline{\Gamma} \subseteq \mathcal{F}$  that contains $\Gamma$.
\end{proposition}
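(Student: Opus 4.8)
The plan is to run the classical Lindenbaum construction, being slightly careful about the intuitionistic reading of $\neg$. First I would note that since $P$ is countable and formulas are finite strings over a countable alphabet, the set $\mathcal{F}$ is countable; fix an enumeration $\mathcal{F} = \{\varphi_n \mid n \in \mathbb{N}\}$. I then define an increasing chain of sets $\Gamma_0 \subseteq \Gamma_1 \subseteq \cdots$ by $\Gamma_0 = \Gamma$ and, for each $n$, $\Gamma_{n+1} = \Gamma_n \cup \{\varphi_n\}$ if this set is consistent, and $\Gamma_{n+1} = \Gamma_n \cup \{\neg \varphi_n\}$ otherwise. Finally I set $\overline{\Gamma} = \bigcup_{n \in \mathbb{N}} \Gamma_n$, which by construction contains $\Gamma$.

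The only step that is not pure bookkeeping is showing, by induction on $n$, that each $\Gamma_n$ is consistent. The base case is the hypothesis. For the step, if $\Gamma_n \cup \{\varphi_n\}$ is consistent there is nothing to prove; otherwise, by the deduction theorem $\Gamma_n \vdash \neg \varphi_n$, and I claim $\Gamma_n \cup \{\neg \varphi_n\}$ is consistent. If it were not, then by Proposition~\ref{prop:chellas}(8) we would have $\Gamma_n \vdash \neg \neg \varphi_n$; combining this with $\Gamma_n \vdash \neg \varphi_n$ and the intuitionistically valid instance $\neg \varphi_n \wedge \neg \neg \varphi_n \Rightarrow \bot$ gives $\Gamma_n \vdash \bot$, contradicting the consistency of $\Gamma_n$. (This is the step I expect to be the main obstacle, or at least the only place where the intuitionistic setting has to be handled with care rather than just invoking a classical maximal-consistency argument.)

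Next I would verify that $\overline{\Gamma}$ is consistent. Suppose $\overline{\Gamma} \vdash \bot$. By the definition of local derivability (equivalently Proposition~\ref{prop:chellas}(5)) there is a finite set $\{\psi_1,\dots,\psi_k\} \subseteq \overline{\Gamma}$ with $\{\psi_1,\dots,\psi_k\} \vdash \bot$; each $\psi_j$ belongs to some $\Gamma_{n_j}$, so taking $N = \max_j n_j$ we get $\{\psi_1,\dots,\psi_k\} \subseteq \Gamma_N$ and hence $\Gamma_N \vdash \bot$, contradicting the consistency of $\Gamma_N$ established above.

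It remains to check maximality. For an arbitrary $\varphi \in \mathcal{F}$, write $\varphi = \varphi_n$; by construction either $\varphi_n \in \Gamma_{n+1} \subseteq \overline{\Gamma}$ or $\neg \varphi_n \in \Gamma_{n+1} \subseteq \overline{\Gamma}$, and not both, since $\overline{\Gamma}$ is consistent. This yields the dichotomy in the definition of maximal consistency; it also rules out any consistent set properly containing $\overline{\Gamma}$, for such a set would contain some $\varphi \notin \overline{\Gamma}$ together with $\neg\varphi \in \overline{\Gamma}$ and would therefore prove $\bot$. Hence $\overline{\Gamma}$ is a maximally consistent set of formulas containing $\Gamma$, which completes the proof.
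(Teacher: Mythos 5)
Your proof is correct, but it takes a different route from the paper. The paper applies Zorn's lemma to the poset $S = \{\Gamma' \subseteq \mathcal{F} \mid \Gamma' \text{ consistent and } \Gamma \subseteq \Gamma'\}$ ordered by inclusion, noting that this poset is inductive (which, like your consistency check for $\bigcup_n \Gamma_n$, rests on the finitary character of local derivability), and then argues separately that a maximal element of $S$ satisfies the dichotomy ``$\varphi \in \overline{\Gamma}$ or $\neg\varphi \in \overline{\Gamma}$''. You instead run the classical Lindenbaum enumeration, which is legitimate here because $P$ is countable so $\mathcal{F}$ is countable; the Zorn argument has the advantage of not needing that hypothesis, while your construction delivers the dichotomy directly by design. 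The one step that genuinely requires care in either approach is the claim that if $\Gamma_n \cup \{\varphi\}$ is inconsistent then $\Gamma_n \cup \{\neg\varphi\}$ is consistent: the paper asserts this without comment in its maximality step, whereas you justify it via Proposition~\ref{prop:chellas}(8) and the intuitionistically valid $\neg\varphi \wedge \neg\neg\varphi \Rightarrow \bot$, which is exactly the right argument in this non-classical setting (one cannot appeal to $\varphi \vee \neg\varphi$). So your write-up is, if anything, slightly more complete on the only delicate point; both proofs are sound.
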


\begin{proof}
Let $S = \{\Gamma' \subseteq \mathcal{F} \mid \Gamma'~\mbox{is consistent and}~\Gamma \subseteq \Gamma'\}$. The poset $(S,\subseteq)$ is inductive. Therefore, by Zorn's lemma, $S$ has a maximal element $\overline{\Gamma}$. By definition of $S$, $\overline{\Gamma}$ is consistent and contains $\Gamma$. Moreover, it is maximal. Otherwise, there exists a formula $\varphi \in \mathcal{F}$ such that $\varphi \notin \overline{\Gamma}$. As $\overline{\Gamma}$ is maximal, this means that $\overline{\Gamma} \cup \{\varphi\}$ is inconsistent, and then $\overline{\Gamma} \cup \{\neg \varphi\}$ is consistent. As $\overline{\Gamma}$ is maximal, we can conclude that $\neg \varphi \in \overline{\Gamma}$.   
\end{proof}

To show completeness, we need first to build a canonical model whose behavior is equivalent to derivabilty.

\begin{definition}[Canonical model]
Let $\Gamma \subseteq \mathcal{F}$ be a set of formulas. $\mathcal{M}_\Gamma = (\mathcal{C},b,\nu)$ is the {\bf canonical model} over $\Gamma$ defined by:

\begin{itemize}
    \item the lattice $\mathcal{C}$ whose the elements are all maximally consistent sets $\overline{\Gamma}'$ with $\Gamma' \subseteq \Gamma$. The infimum (resp. supremum) of two maximally consistent sets $\overline{\Gamma'}$ and $\overline{\Gamma''}$ is given by $\overline{\Gamma' \cap \Gamma''}$ (resp. $\overline{\Gamma' \cup \Gamma''}$). It is quite obvious to show that this lattice is complete and it satisfies the infinite distributive law. Finally, given two maximally consistent sets $\Gamma'$ and $\Gamma''$ we define their exponential $\overline{\Gamma'}^{\overline{\Gamma''}}$ by $\bigvee \{\overline{\Gamma'''} \mid \overline{\Gamma'''} \wedge \overline{\Gamma''} \subseteq \overline{\Gamma'}\}$. As the infinite distributive law is satisfied, $\overline{\Gamma'}^{\overline{\Gamma''}}$ is the exponential of $\overline{\Gamma'}$ and $\overline{\Gamma''}$. 
    \item $b : \overline{\Gamma} \to \mathcal{C}$ is the mapping which associates to every $\varphi \in \overline{\Gamma}$ the maximally consistent set $\overline{\{\varphi\}}$. It is easy to show that $\mathcal{C}$ is covered by $b$.
    \item $\nu : P \to \mathcal{C}$ is the mapping defined by: $p \mapsto \left\{
    \begin{array}{ll}
    \overline{\Gamma} & \mbox{if $p \in \overline{\Gamma}$} \\
    \emptyset & \mbox{otherwise}
    \end{array} \right.$
\end{itemize}
\end{definition}

\begin{proposition}
\label{prop:existence model}
For every $\varphi \in \mathcal{F}$, we have either $\sem{\mathcal{M}_\Gamma}(\varphi) = \overline{\Gamma}$ and $\varphi \in \overline{\Gamma}$, or $\sem{\mathcal{M}_\Gamma}(\varphi) = \emptyset$ and $\varphi \notin \overline{\Gamma}$.
\end{proposition}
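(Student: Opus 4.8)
The plan is to proceed by structural induction on $\varphi$, establishing the slightly stronger statement that $\sem{\mathcal{M}_\Gamma}(\varphi)$ is always \emph{either} the terminal object $t=\overline{\Gamma}$ of the canonical lattice $\mathcal{C}$ \emph{or} its initial object $\emptyset$, and that the first alternative holds precisely when $\varphi\in\overline{\Gamma}$. The two halves of the claimed dichotomy then follow at once, since exactly one of $\varphi\in\overline{\Gamma}$, $\varphi\notin\overline{\Gamma}$ holds. Throughout I use only elementary facts about a maximally consistent set $\overline{\Gamma}$: it is deductively closed (hence contains every theorem and is closed under Modus Ponens) and \emph{complete} (for every $\chi$, exactly one of $\chi,\neg\chi$ lies in $\overline{\Gamma}$), which together yield that $\chi\wedge\psi\in\overline{\Gamma}$ iff $\chi\in\overline{\Gamma}$ and $\psi\in\overline{\Gamma}$, that $\chi\vee\psi\in\overline{\Gamma}$ iff $\chi\in\overline{\Gamma}$ or $\psi\in\overline{\Gamma}$, and that $\chi\Rightarrow\psi\in\overline{\Gamma}$ iff $\chi\notin\overline{\Gamma}$ or $\psi\in\overline{\Gamma}$.

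The base cases are immediate: $\sem{\mathcal{M}_\Gamma}(\top)=t=\overline{\Gamma}$ and $\top\in\overline{\Gamma}$; $\sem{\mathcal{M}_\Gamma}(\bot)=\emptyset$ and $\bot\notin\overline{\Gamma}$ by consistency; and $\sem{\mathcal{M}_\Gamma}(p)=\nu(p)$, which by definition of $\nu$ equals $\overline{\Gamma}$ if $p\in\overline{\Gamma}$ and $\emptyset$ otherwise. For the propositional connectives I feed the induction hypothesis into the Heyting-algebra identities valid in any Cartesian closed morpho-category: $t\wedge t=t$ and $t\wedge\emptyset=\emptyset\wedge t=\emptyset\wedge\emptyset=\emptyset$; $t\vee x=t$ and $\emptyset\vee\emptyset=\emptyset$; the exponential $\emptyset^{\,x}$ (i.e.\ $x\Rightarrow\emptyset$) equals $\emptyset$ when $x=t$ and equals $t$ when $x=\emptyset$; and $y^{\,x}=x\Rightarrow y$ equals $t$ when $x=\emptyset$, equals $t$ when $y=t$, and equals $\emptyset$ when $x=t$ and $y=\emptyset$. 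Matching each of these against the corresponding membership fact for $\overline{\Gamma}$ closes the cases $\neg\varphi$, $\varphi\wedge\psi$, $\varphi\vee\psi$, $\varphi\Rightarrow\psi$; this part is pure bookkeeping.

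The substance is in the two modal cases. For $\Diamond\varphi$, write $\sem{\mathcal{M}_\Gamma}(\Diamond\varphi)=\delta[b](\sem{\mathcal{M}_\Gamma}(\varphi))$ and use Proposition~\ref{prop:stability3}, namely $\delta[b](d)=\bigvee\{b(v_t)\mid v\in U(d)\}$. If $\sem{\mathcal{M}_\Gamma}(\varphi)=t$, then because $\mathcal{C}$ is covered by $b$ we have $t\rightarrowtail\bigvee\{b(v_t)\mid v\in U(t)\}$, whence $\delta[b](t)=t$ by terminality; and from $\varphi\in\overline{\Gamma}$ (induction hypothesis) and the extensivity axiom $\varphi\Rightarrow\Diamond\varphi$ we get $\Diamond\varphi\in\overline{\Gamma}$ by Modus Ponens. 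If $\sem{\mathcal{M}_\Gamma}(\varphi)=\emptyset$, then $U(\emptyset)=\emptyset$ (the forgetful functor of a morpho-category preserves colimits, hence the initial object), so $\delta[b](\emptyset)$ is the supremum of the empty family, i.e.\ $\emptyset$; and from $\varphi\notin\overline{\Gamma}$, that is $\neg\varphi\in\overline{\Gamma}$, together with the derived equivalence $\neg\Diamond\varphi\Leftrightarrow\neg\varphi$ (obtained from the derived adjunction property with $\psi:=\bot$ and $\Box\bot\Leftrightarrow\bot$) we obtain $\neg\Diamond\varphi\in\overline{\Gamma}$, hence $\Diamond\varphi\notin\overline{\Gamma}$. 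Dually for $\Box\varphi=\varepsilon[b](\cdot)$: when $\sem{\mathcal{M}_\Gamma}(\varphi)=t$, $\varepsilon[b](t)=t$ by the preservation property of Theorem~\ref{th:main results}, and $\varphi\in\overline{\Gamma}$ yields $\Box\varphi\in\overline{\Gamma}$ via the derived adjunction property specialised with antecedent $\top$ (which gives $\Box\varphi\Leftrightarrow\varphi$); when $\sem{\mathcal{M}_\Gamma}(\varphi)=\emptyset$, the canonical structuring element $b:\chi\mapsto\overline{\{\chi\}}$ never returns the initial object, so $D^\varepsilon_\emptyset$ contains only objects with empty carrier and hence $\varepsilon[b](\emptyset)=\emptyset$, while $\varphi\notin\overline{\Gamma}$ together with the anti-extensivity axiom $\Box\varphi\Rightarrow\varphi$ forces $\Box\varphi\notin\overline{\Gamma}$.

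I expect the main obstacle to be the ``positive'' modal sub-cases, i.e.\ deducing $\Diamond\varphi\in\overline{\Gamma}$ and $\Box\varphi\in\overline{\Gamma}$ from the mere fact that the semantic value is $t$, since these cannot be obtained from extensivity/anti-extensivity alone but require the adjunction-derived equivalences relating $\Box\varphi$ and $\Diamond\varphi$ to $\varphi$ over a complete theory, together with the facts that $\varepsilon[b]$ fixes $t$ and that $\delta[b](t)=t$ under the covering hypothesis on $b$. A secondary delicate point is the computation of $\varepsilon[b]$ and $\delta[b]$ at the bottom object $\emptyset$ of the canonical lattice, where one must invoke the explicit form of $b$ in the canonical model (every value is a non-trivial maximally consistent set, never the initial object) and of its forgetful functor.
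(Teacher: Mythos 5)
Your proof is correct and follows essentially the same route as the paper's: a structural induction showing that the canonical semantics is two-valued (top $\overline{\Gamma}$ or bottom $\emptyset$) and that the top value is taken exactly on members of $\overline{\Gamma}$, with the propositional cases handled by Heyting-algebra bookkeeping and the modal cases by the preservation and covering-induced extensivity/anti-extensivity properties of $\varepsilon[b]$ and $\delta[b]$. Your handling of the membership direction for $\Box$ --- deriving $\Box \varphi \Leftrightarrow \varphi$ from the adjunction theorem together with $\Diamond \top \Leftrightarrow \top$ --- is in fact more careful than the paper's appeal to the Necessity rule, which strictly applies only to theorems and not to arbitrary elements of $\overline{\Gamma}$.
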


\begin{proof}
By structural induction on the formula $\varphi$. The basic case as well as the cases where $\varphi$ is of the form $\varphi_1 \wedge \varphi_2$ and $\varphi_1 \vee \varphi_2$ are obvious. The other cases are treated as follows:

\begin{itemize}
    \item $\varphi$ is of the form $\varphi_1 \Rightarrow \varphi_2$. By definition, we have that
    $$\sem{\mathcal{M}_\Gamma}(\varphi) = \bigvee \{\overline{\Gamma}' \mid \overline{\Gamma}' \wedge \sem{\mathcal{M}_\Gamma}(\varphi_1) \subseteq \sem{\mathcal{M}_\Gamma}(\varphi_2)\}$$
    Here, by the induction hypothesis several cases have to be considered:
    \begin{enumerate}
        \item $\sem{\mathcal{M}_\Gamma}(\varphi_1) = \emptyset$  and $\sem{\mathcal{M}_\Gamma}(\varphi_2) = \emptyset$. In this case, we necessarily have that $\sem{\mathcal{M}_\Gamma}(\varphi) = \overline{\Gamma}$. 
        \item $\sem{\mathcal{M}_\Gamma}(\varphi_1) = \emptyset$  and $\sem{\mathcal{M}_\Gamma}(\varphi_2) = \overline{\Gamma}$. In this case, we necessarily have that $\sem{\mathcal{M}_\Gamma}(\varphi) = \overline{\Gamma}$. 
        \item $\sem{\mathcal{M}_\Gamma}(\varphi_1) = \overline{\Gamma}$  and $\sem{\mathcal{M}_\Gamma}(\varphi_2) = \emptyset$. In this case, we necessarily have that $\sem{\mathcal{M}_\Gamma}(\varphi) = \emptyset$. 
        \item $\sem{\mathcal{M}_\Gamma}(\varphi_1) = \overline{\Gamma}$  and $\sem{\mathcal{M}_\Gamma}(\varphi_2) = \overline{\Gamma}$. In this case, we necessarily have that $\sem{\mathcal{M}_\Gamma}(\varphi) = \overline{\Gamma}$. 
    \end{enumerate}
    In all the case, as $\overline{\Gamma}$ is maximally consistent, we easily deduce that $\varphi$ belongs or not to $\overline{\Gamma}$ depending on the case. 
    \item $\varphi$ is of the form $\neg \psi$. By definition, we have that
    $$\sem{\mathcal{M}_\Gamma}(\varphi) = \bigvee \{\overline{\Gamma}' \mid \overline{\Gamma}' \wedge \sem{\mathcal{M}_\Gamma}(\psi) \subseteq \emptyset\}$$
    Here, by the induction hypothesis, two cases have to be considered:
    \begin{enumerate}
        \item $\sem{\mathcal{M}_\Gamma}(\psi) = \emptyset$. We then have that $\sem{\mathcal{M}_\Gamma}(\varphi) = \overline{\Gamma}$.
        \item $\sem{\mathcal{M}_\Gamma}(\psi) = \overline{\Gamma}$. We then have that $\sem{\mathcal{M}_\Gamma}(\varphi) = \emptyset$.
    \end{enumerate}
    As $\overline{\Gamma}$ is maximally consistent, deducing whether $\varphi$ belongs or not to $\overline{\Gamma}$ depending on the case is also obvious.
    \item $\varphi$ is of the form $\Box \psi$. By definition, we have that
    $$\sem{\mathcal{M}_\Gamma}(\varphi) = \bigvee \{\overline{\Gamma}' \mid \forall \theta \in \overline{\Gamma}', \overline{\{\theta\}} \subseteq \sem{\mathcal{M}_\Gamma}(\psi)\}$$
    Here, by the induction hypothesis, two cases have to be considered:
    \begin{enumerate}
        \item $\sem{\mathcal{M}_\Gamma}(\psi) = \emptyset$. We then have that $\sem{\mathcal{M}_\Gamma}(\varphi) = \emptyset$.
        \item $\sem{\mathcal{M}_\Gamma}(\psi) = \overline{\Gamma}$. We then have that $\sem{\mathcal{M}_\Gamma}(\varphi) = \overline{\Gamma}$ (cf. Theorem~\ref{th:main results}).
    \end{enumerate}
    As $\overline{\Gamma}$ is maximally consistent, deducing whether $\varphi$ belongs or not to $\overline{\Gamma}$ depending on the case is direct by the Necessity rule.
    \item $\varphi$ is of the form $\Diamond \psi$. By definition, we have that
    $$\sem{\mathcal{M}_\Gamma}(\varphi) = \bigwedge \{\overline{\Gamma}' \mid \forall \theta \in \sem{\mathcal{M}_\Gamma}(\psi), \overline{\{\theta\}} \subseteq \overline{\Gamma}'\}$$
    Here, by the induction hypothesis, two cases have to be considered:
    \begin{enumerate}
        \item $\sem{\mathcal{M}_\Gamma}(\psi) = \emptyset$. We then have that $\sem{\mathcal{M}_\Gamma}(\varphi) = \emptyset$ (cf. Theorem~\ref{th:main results}).
        \item $\sem{\mathcal{M}_\Gamma}(\psi) = \overline{\Gamma}$. We then have that $\sem{\mathcal{M}_\Gamma}(\varphi) = \overline{\Gamma}$.
    \end{enumerate}
    As $\overline{\Gamma}$ is maximally consistent, deducing whether $\varphi$ belongs or not to $\overline{\Gamma}$ depending on the case is direct by the extensivity axiom and Modus Ponens.
\end{itemize}
\end{proof}

\begin{corollary}
\label{existence of a model}
$\Gamma \vdash \varphi \Longleftrightarrow \mathcal{M}_\Gamma \models \varphi$
\end{corollary}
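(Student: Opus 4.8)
The statement follows quickly from the truth lemma Proposition~\ref{prop:existence model}, once $\mathcal{M}_\Gamma \models \varphi$ is unfolded. The plan is: first make the semantic condition explicit, then read it off Proposition~\ref{prop:existence model}, then match membership in $\overline{\Gamma}$ with provability from $\Gamma$.

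First I would identify the terminal object $t$ of the canonical lattice $\mathcal{C}$. Its elements are the maximally consistent sets $\overline{\Gamma'}$ with $\Gamma' \subseteq \Gamma$ together with the bottom $\emptyset$, and, since the supremum of $\overline{\Gamma'}$ and $\overline{\Gamma''}$ is $\overline{\Gamma' \cup \Gamma''}$, the greatest element of $\mathcal{C}$ is attained at $\Gamma' = \Gamma$; hence $t = \overline{\Gamma}$. Moreover $\overline{\Gamma} \neq \emptyset$ because $\top \in \overline{\Gamma}$, and in a poset regarded as a category isomorphism is equality, so $\mathcal{M}_\Gamma \models \varphi$ (i.e.\ $\sem{\mathcal{M}_\Gamma}(\varphi) \simeq t$) is equivalent to $\sem{\mathcal{M}_\Gamma}(\varphi) = \overline{\Gamma}$. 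Proposition~\ref{prop:existence model} provides exactly the dichotomy that either $\sem{\mathcal{M}_\Gamma}(\varphi) = \overline{\Gamma}$ with $\varphi \in \overline{\Gamma}$, or $\sem{\mathcal{M}_\Gamma}(\varphi) = \emptyset$ with $\varphi \notin \overline{\Gamma}$; as $\emptyset \neq \overline{\Gamma}$, this collapses to the clean equivalence $\mathcal{M}_\Gamma \models \varphi \Longleftrightarrow \varphi \in \overline{\Gamma}$.

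It then remains to prove $\Gamma \vdash \varphi \Longleftrightarrow \varphi \in \overline{\Gamma}$. The direction $\Gamma \vdash \varphi \Rightarrow \varphi \in \overline{\Gamma}$ is routine: $\Gamma \subseteq \overline{\Gamma}$ and monotonicity (Proposition~\ref{prop:chellas}) give $\overline{\Gamma} \vdash \varphi$, and a maximally consistent set is deductively closed — if it were not, $\neg \varphi \in \overline{\Gamma}$, so $\overline{\Gamma} \vdash \varphi$ and $\overline{\Gamma} \vdash \neg \varphi$ would contradict consistency (Proposition~\ref{prop:chellas}) — hence $\varphi \in \overline{\Gamma}$. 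The converse $\varphi \in \overline{\Gamma} \Rightarrow \Gamma \vdash \varphi$ is the delicate step, and I expect it to be the main obstacle: an arbitrary maximal extension of $\Gamma$ would not do, so the argument has to exploit the precise way $\overline{\Gamma}$ is built in the canonical model, so that every member of $\overline{\Gamma}$ is backed by a finite $\Gamma$-derivation. I would run it by the contrapositive, combining the consistency characterisations of Proposition~\ref{prop:chellas} with the corollary to the deduction theorem ($\Gamma \not\vdash \varphi$ iff $\Gamma \cup \{\neg \varphi\}$ is consistent) to conclude that $\neg \varphi$, not $\varphi$, lies in $\overline{\Gamma}$ whenever $\Gamma \not\vdash \varphi$. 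Chaining the three equivalences yields $\Gamma \vdash \varphi \Longleftrightarrow \varphi \in \overline{\Gamma} \Longleftrightarrow \mathcal{M}_\Gamma \models \varphi$.
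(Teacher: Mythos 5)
Your reduction is the one the paper intends: the paper's entire proof is the single sentence ``direct consequence of Proposition~\ref{prop:existence model}'', and your first two paragraphs make that reduction explicit and correct --- the terminal object of the canonical lattice is $\overline{\Gamma}$, so $\mathcal{M}_\Gamma\models\varphi$ iff $\varphi\in\overline{\Gamma}$, and the direction $\Gamma\vdash\varphi\Rightarrow\varphi\in\overline{\Gamma}$ follows from deductive closure of maximally consistent sets exactly as you argue.

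The gap is the converse, and you are right to single it out as the obstacle, but the contrapositive argument you sketch cannot close it. From $\Gamma\not\vdash\varphi$ you get that $\Gamma\cup\{\neg\varphi\}$ is consistent, hence that \emph{some} maximally consistent extension of $\Gamma$ contains $\neg\varphi$; but $\overline{\Gamma}$ is one fixed extension produced by Zorn's lemma in Proposition~\ref{lindenbaum lemma}, and nothing forces that particular extension to contain $\neg\varphi$ rather than $\varphi$. Concretely, take $\Gamma=\emptyset$ and $\varphi=p$ a propositional variable: neither $\vdash p$ nor $\vdash\neg p$ is derivable, yet any maximally consistent $\overline{\emptyset}$ contains exactly one of $p$, $\neg p$, so exactly one of $\mathcal{M}_\emptyset\models p$ and $\mathcal{M}_\emptyset\models\neg p$ holds while the corresponding derivability claim fails. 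So the ``$\Longleftarrow$'' direction of the corollary is false as stated; this is a defect of the statement (glossed over by the paper's one-line proof) rather than of your argument, and no amount of exploiting ``the precise way $\overline{\Gamma}$ is built'' will rescue it, since the construction is just an arbitrary Zorn extension. Note that the completeness theorem only uses the ``$\Longrightarrow$'' direction, applied to $\Gamma\cup\{\neg\varphi\}$: since $\neg\varphi$ and every member of $\Gamma$ lie in $\overline{\Gamma\cup\{\neg\varphi\}}$, Proposition~\ref{prop:existence model} yields a model of $\Gamma$ refuting $\varphi$. If you weaken the corollary to ``$\varphi\in\overline{\Gamma}\Longleftrightarrow\mathcal{M}_\Gamma\models\varphi$'' (which is literally the truth lemma) or keep only ``$\Gamma\vdash\varphi\Longrightarrow\mathcal{M}_\Gamma\models\varphi$'', your proof is complete and suffices for the sequel.
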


\begin{proof}
Direct consequence of Proposition~\ref{prop:existence model}.
\end{proof}

\begin{theorem}[Completeness]
For every $\Gamma \subseteq \mathcal{F}$ and every $\varphi \in \mathcal{F}$, we have that:
$$\Gamma \models \varphi \Longrightarrow \Gamma \vdash \varphi$$
\end{theorem}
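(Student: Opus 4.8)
The plan is to follow the usual Henkin route already prepared by the preceding development: show that the canonical model $\mathcal{M}_\Gamma$ is a semantic model of every formula of $\Gamma$, and then read off $\Gamma \vdash \varphi$ from Corollary~\ref{existence of a model}. Concretely, assume $\Gamma \models \varphi$. We may assume $\Gamma$ is consistent, for otherwise $\Gamma \vdash \bot$ and hence $\Gamma \vdash \varphi$ trivially. The one thing that needs to be checked is that $\mathcal{M}_\Gamma$ satisfies each $\psi \in \Gamma$; once this is known, the definition of $\models$ forces $\mathcal{M}_\Gamma \models \varphi$, and Corollary~\ref{existence of a model} turns $\mathcal{M}_\Gamma \models \varphi$ into $\Gamma \vdash \varphi$, which is the conclusion.

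To see that $\mathcal{M}_\Gamma$ is a model of $\Gamma$, first identify the terminal object $t$ of the lattice $\mathcal{C}$ underlying $\mathcal{M}_\Gamma$: among the maximally consistent sets used to build $\mathcal{C}$, with joins computed as $\overline{\Gamma'\cup\Gamma''}$, the largest one is obtained for the full index set, i.e. $t = \overline{\Gamma}$. By Proposition~\ref{lindenbaum lemma} the consistent set $\Gamma$ admits a maximally consistent extension, namely $\overline{\Gamma}$, so $\Gamma \subseteq \overline{\Gamma}$. Hence for every $\psi \in \Gamma$ we have $\psi \in \overline{\Gamma}$, and Proposition~\ref{prop:existence model} gives $\sem{\mathcal{M}_\Gamma}(\psi) = \overline{\Gamma} = t$, that is $\mathcal{M}_\Gamma \models \psi$. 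Note also that $\mathcal{M}_\Gamma$ is a legitimate model in the sense of the semantics: by construction its lattice $\mathcal{C}$ is complete, satisfies the infinite distributive law (so it is Cartesian closed, with exponentials $\overline{\Gamma'}^{\,\overline{\Gamma''}}$ as defined), and is covered by $b$, matching the constraints on morpho-categories imposed for the logic.

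The genuine work has already been absorbed into the truth lemma (Proposition~\ref{prop:existence model}) and the construction of $\mathcal{M}_\Gamma$; what remains for this theorem is the bookkeeping just described, the only delicate points being the correct identification of the terminal object of $\mathcal{C}$ with $\overline{\Gamma}$ and the appeal to Lindenbaum's lemma to secure $\Gamma \subseteq \overline{\Gamma}$. If one prefers to argue by contraposition, the same ingredients suffice: from $\Gamma \not\vdash \varphi$ and Corollary~\ref{existence of a model} one gets $\mathcal{M}_\Gamma \not\models \varphi$, while $\mathcal{M}_\Gamma \models \psi$ for all $\psi \in \Gamma$ as above, so $\mathcal{M}_\Gamma$ witnesses $\Gamma \not\models \varphi$.
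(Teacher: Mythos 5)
Your argument is correct and follows essentially the same Henkin route as the paper: canonical model, Lindenbaum's lemma (Proposition~\ref{lindenbaum lemma}), and the truth lemma (Proposition~\ref{prop:existence model}) packaged as Corollary~\ref{existence of a model}. The only cosmetic difference is that you prove the implication directly, checking that the canonical model over $\Gamma$ satisfies every member of $\Gamma$ and then invoking the corollary, whereas the paper argues by contraposition through the consistency of $\Gamma \cup \{\neg \varphi\}$; both versions rest on exactly the same ingredients.
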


\begin{proof}
If $\Gamma {\not \vdash} \varphi$, then $\Gamma \cup \{\neg \varphi\}$ is consistent. By Proposition~\ref{lindenbaum lemma}, there exists a maximal consistent set of formulas $\overline{\Gamma}$ that extends $\Gamma$, and then by Corollary~\ref{existence of a model}, we have that  that $\mathcal{M}_\Gamma \models \neg \varphi$, i.e. $\mathcal{M}_\Gamma {\not \models} \varphi$. 
\end{proof}

If we restrict ourselves to the presheaf topos of sets (i.e. the topos of presheaves of the form $F : \bullet \to Set$), we then find the standard normal modal logic T and its extensions S4, B and S5 if we add the following axioms:

\begin{itemize}
    \item $\Box \varphi \Rightarrow \Box \Box \varphi$ (S4)
    \item $\varphi \Rightarrow \Box \Diamond \varphi$ (B)
    \item $\Diamond \varphi \Rightarrow \Box \Diamond \varphi$ (S5)
\end{itemize}

In this case, models are of the form $((\mathcal{P}(Q),\subseteq),b,\nu)$ where $Q$ is the set of states, $b : Q \to \mathcal{P}(Q)$ is the accessibility relation, and $\nu : P \to \mathcal{P}(Q)$ is the evaluation of propositional variables, and then morpho-categories under consideration are Boolean. Hence, the defined logic is not intuitionistic anymore but classical. The calculus presented above has then to be transformed by accepting as tautologies all propositional tautology instances, and by adding the following axiom:

\begin{itemize}
    \item {\em Duality:} $\neg \Box \varphi \Leftrightarrow \Diamond \neg \varphi$
\end{itemize}
The proof of completeness is then in all respects equivalent to that above.
\section{Conclusion}

Deterministic mathematical morphology dealing with increasing operations relies on two core notions: the one of adjunction that induces powerful algebraic properties of the operators, and the one of structuring element, that allows defining concrete instantiations of abstract operators, useful in many applications on various structures such as images or more generally spatial information, graphs and hypergraphs, etc. In this paper, we proposed an additional level of abstraction by moving to the domain of categories. We introduced the notion of morpho-categories, on which dilations and erosions based on structuring elements (defined abstractly) are built. We showed that such categories can be defined from topos of presheaves, which are general enough to include a number of structures (sets, hypergraphs, etc.). Then in order to be able to handle inclusions between subobjects, we introduced the notion of morpholizable category defined from a family of faithful functors associated with the category. Inclusion morphisms were then defined. Besides the definitions and theoretical results proved in the paper, examples of structuring elements in this new framework and associated dilations and erosions illustrated that the proposed construction applies to sets, graphs, hypergraphs, simplicial complexes, and includes previous definitions on such structures. Hence the proposed abstraction encompasses each particular case, and can be instantiated in many different ways. {Finally, we applied our abstract framework to define an intuitionistic modal logic by giving semantics of the modalities $\Box$ and $\Diamond$ from erosion and dilation, respectively.}

Future work will aim at further investigating the capabilities of the proposed framework, for example to deal with imprecision, based on fuzzy representations, or to handle extensions of simplicial complexes, such as simplicial sets (which can be defined by presheaves $F : \Delta^{op} \to Set$ where $\Delta$ is the simplex category, i.e. the category of finite ordinals with order-preserving functions as morphisms), in a algebraic topology context. {Another extension would be to study the properties of the modal logic defined in the paper (e.g. Henessy-Milner theorem, finite model property, decidability results)  and also to explore its links with spatial reasoning. For this, we could adapt and continue the approach that we initiated in~\cite{AB19}.}


\end{document}